\newcommand{\version}{April 17, 2013 }
\theoremstyle{plain}
\newtheorem{thm}{THEOREM}[section]
\newtheorem{lm}[thm]{LEMMA}
\newtheorem{cl}[thm]{COROLLARY}
\newtheorem{prop}[thm]{PROPOSITION}
\theoremstyle{definition}
\newtheorem{defi}[thm]{DEFINITION}
\theoremstyle{definition}
\newtheorem{remark}[thm]{Remark}
\newcommand{\upchi}{\raise1pt\hbox{$\chi$}}
\newcommand{\R}{{\mathord{\mathbb R}}}
\newcommand{\N}{{\mathord{\mathbb N}}}
\newcommand{\h}{{\mathcal{H}}}
\newcommand{\K}{{\mathcal{K}}}
\newcommand{\Q}{{\mathcal{Q}}}
\renewcommand{\|}{{\Vert}}
\numberwithin{equation}{section}
\def\E{{\cal E}_{N,E}}
\def\dd{{\rm d}}
\def\ncht{\left(\begin{matrix} N\cr 2\cr \end{matrix}\right)}
\def\nmcht{\left(\begin{matrix} N-1\cr 2\cr \end{matrix}\right)}
\def\STE{{\mathcal S}_{N,E}}
\def\LN{L_{N,E}}
\begin{document}

\markboth{\scriptsize{CCL \version}}{\scriptsize{CCL \version}}

\title{SPECTRAL GAP FOR THE KAC MODEL WITH  HARD SPHERE COLLISIONS}

\author{\vspace{5pt} Eric A. Carlen$^{1}$,  Maria C. Carvalho$^{2}$, and Michael Loss$^{3}$ \\
\vspace{5pt}\small{$1.$ Department of Mathematics, Hill Center, Rutgers University}\\[-6pt]
\small{
110 Frelinghuysen Road
Piscataway NJ 08854 USA}\\
\vspace{5pt}\small{$2.$ Department of Mathematics and CMAF, University of Lisbon,}
\\[-6pt] \small{Av. Prof. Gamma Pinto 2, 1649-003 Lisbon, Portugal}\\
\vspace{5pt}\small{$3.$ School of Mathematics, Georgia Institute of
Technology,} \\[-6pt]
\small{Atlanta, GA 30332 USA}\\
 }
\date{\version}

\footnotetext [1]{Work of Eric Carlen is partially supported by U.S. NSF grant DMS 0901632. Work of  Maria Carvalho is partially supported by 
FCT Project PTDC/MAT/100983/2008.
Work of Michael Loss is partially supported by U.S.
NSF 
grant DMS 0901304\\
\copyright\, 2013 by the authors. This paper may be reproduced, in
its entirety, for non-commercial purposes.}

\maketitle

\def\dd{{\rm d}}
\def\ncht{\left(\begin{matrix} N\cr 2\cr \end{matrix}\right)}
\def\nmcht{\left(\begin{matrix} N-1\cr 2\cr \end{matrix}\right)}

\begin{abstract}

We prove the analog of the Kac conjecture for  hard sphere collisions.

\end{abstract}

\medskip
\leftline{\footnotesize{\qquad Mathematics subject classification numbers:  47A63, 15A90}}
\leftline{\footnotesize{\qquad Key Words: spectral gap, kinetic theory}}

\section{Introduction} \label{intro}

The relation between the microscopic dynamics of a system of $N$ particles, interacting through binary collisions, and the Boltzmann equation
--- which is supposed to provide a simplified description of this dynamics --- remains the source of many challenging problems in mathematics and physics. 
Since the simplified description of the microscopic dynamics  provided by the Boltzmann equation still contains all of the information
necessary for describing the hydrodynamics of the system in the appropriate scaling regimes, the Boltzmann equation
provides an essential mesoscopic link between the microscopic dynamics and the macroscopic description of the system.

In 1956 Marc Kac introduced a model \cite{K56,K59} that contains {\em only} those features of the microscopic collision process in a dilute gas
that are relevant to the derivation of the Bolztmann equation in the large $N$ limit.  The original model investigated by 
Kac involved a caricature of collisions between {\it Maxwellian molecules}, which means that the force law governing pair collisions is such that 
the rates  at which the various kinematically possible collisions take place
depends only on the angle between the in-coming and out-going relative velocities, and  not on the magnitude of the relative velocity. This affords 
considerable simplification, and most previous work on the Kac model has has been carried out in this context of Maxwellian molecules.

We  consider  a version of the Kac model where the interaction between the pairs of particles is
described by   a caricature of  hard--sphere collisions, and we solve the {\em Kac conjecture}, which is explained below,  for this more physical variant of his model.

The Kac model describes the evolution of a spatially homogenous ``gas'' of $N$ particles in terms of a stochastic process
that is a random walk, the {\em Kac walk} on the energy sphere for the $N$ particles. Each binary collision is represented by a jump to 
a new point on the energy sphere  in which only the coordinates of a single pair of particles changes.  The collision times  arrive in a Possion stream, where for each pair 
$i,j$ of particles, the jump rate is a function of $E_{i,j}$,  the kinetic energy of the pair of particles. The standard versions of the spatially homogeneous Boltzmann equation
are obtained from the Kac walk in the large $N$ limit by choosing the jump rates to be proportional to $E_{i,j}^\gamma$  for some power $0\leq \gamma \leq 1/2$.
The case $\gamma = 0$ is the case of Maxwellian molecules, in which the jump rates are uniform for all pairs, while the case $\gamma =1/2$
corresponds to hard sphere collisions, in which case the jump rates are not only non-uniform, but are not uniformly bounded from below. 

The Kac walk is a reversible process, and the uniform probability measure on the energy sphere is its invariant  measure; i.e., its
equilibrium measure. The rate at which
an initially non-uniform distribution relaxes back  to equilibrium under the Kac walk has consequences for the Boltzmann equation if and only if
this rate can be controlled uniformly in $N$. 

One way to measure this  rate is in terms of a {\em spectral gap} of the generator of the Kac walk. The Kac conjecture is that this generator has a spectral gap
that is  bounded below uniformly in $N$.

As Kac remarked, it is non-trivial even to show that the spectral gap is strictly positive for each fixed $N$. Up to 2000, the only available lower 
bounds on the spectral gap were in terms of an inverse power of $N$ \cite{DS}. 
Eventually,   Kac's conjecture was resolved 
for Maxwellian molecules in \cite{J01}, which provided no estimate on the gap,  and somewhat later in \cite{CCL00,CCL03} where the gap was computed
explicitly. 

In the case of hard spheres, the difficulty of proving the Kac conjecture is amplified by the fact that the collision rate for the pair $i,j$ 
 is  proportional to $E_{i,j}^{1/2}$. Pairs of slowly moving particles,  with a low combined energy, are effectively removed from the collision process. 
Memory of the initial distribution is only erased in the random collisions, so we need to know 
that there are not many particles that wait a long time before colliding.  The slow, low energy particles are a problem in this regard, and the 
particles with higher energy   must be shown, so to speak, to ``make up for this''.

Some results have been obtained for the Kac model with non-Maxwellian collisions. In particular, Villani \cite{V}  has studied {\em entropy production}
bounds for the Kac model with non-Maxwellian collisions, though he considers collision rates proportional to $(1+E_{i,j})^\gamma$, and in this way eliminates the
small-energy problem. However, he  proves the surprising result that  entropy production
is bounded below uniformly in $N$ for $\gamma =1$, the case of ``super hard spheres''. 
While his entropy production bound implies a spectral gap bound for rates proportional to 
$(1+E_{i,j})$, it does not seem possible to glean from this any information on uniformity of the spectral gap with rates proportional to $E_{i,j}^{1/2}$, or even $E_{i,j}$.

 In the rest of this section,  the Kac walk is defined, and  we present our main results and outline the strategy of proof.

\subsection{The Kac Walk}

For $N\in \N$ and $E > 0$,  let $\STE$ be the set
consisting of all vectors   $ v = (v_1,\dots , v_N)\in \R^N$  with
$$\frac1N\sum_{j=1}^Nv_j^2 = E \\ .$$
A point $ v\in \STE$  specifies the velocities of a collection of $N$ particles with unit mass. 
$\STE$ is the {\em energy sphere} for $N$ particles with mean energy $E$ per particle.  The Kac walk is a random walk
 in which each step of the walk corresponds to a binary collision of a single pair of particles.
 With each collision, 
 the state of the process ``jumps'' from $ (v_1,\dots , v_N)$ to
$$ (v_1,v_2,\dots,v_i^*,\dots,v_j^*,\dots,v_N)\ ,$$
where only $v_i$ and $v_j$ have changed. Since the process  models  energy conserving collisions, 
so that $\STE$ may indeed be taken as the state space of the walk, we require that
\begin{equation}\label{kinpos}
{v_i^*}^2+{v_j^*}^2 = v_i^2+v_j^2\ .
\end{equation}
The set of all collisions satisfying (\ref{kinpos}) can be parameterized by
\begin{equation}\label{colrule}
v_i^*=v_i\cos \theta+v_j\sin \theta
\qquad 
v_j^*=-v_i\sin \theta+v_j\cos \theta\ .
\end{equation}

\medskip

We now specify the rate at which these collisions occur. 
We associate to each pair $(i,j)$,  $i<j$ an exponential random variable  $T_{i,j}$ with parameter
\begin{equation}\label{jumprate}
\lambda_{i,j} =  N\ncht^{-1}(v_i^2 + v_j^2)^{\gamma}\ ,
\end{equation}
where $0 \leq \gamma \leq 1$. In more detail, the $T_{i,j}$ are independent, and
$${\rm Pr}\{ T_{i,j} > t\} = e^{-t\lambda_{i,j}}\ .$$
$T_{i,j}$ is the waiting time for  particles $i$ and $j$ to collide.
 The first collision occurs at time
\begin{equation}\label{alarm}
T = \min_{i<j}\{T_{i,j}\}\ .
\end{equation}

At the time $T$, the pair $(i,j)$ furnishing the minimum collide.  Then an angle $\theta$ is selected uniformly at random, and the process jumps from
$ (v_1,\dots , v_N)$ to
$ (v_1,v_2,\dots,v_i^*,\dots,v_j^*,\dots,v_N)$ with $v^*_i$ and $v^*_j$ given by \eqref{colrule}. After each collision, the process starts afresh.
Let $V(t)$ denote the random state of the process at time $t$.

It is possible to generalize the model to include a non-uniform rule for selecting the scattering angle $\theta$. Such generalizations are of
physical interest, and were investigated in \cite{CCL03}. However, to keep the notation simple, we restrict our attention for most of the paper
to the case in which the collision angle is chosen uniformly. This allows us to focus on the much more significant technical 
difficulties that arise from the consideration of non-uniform jump rates. 
\if false 
Indeed, most previous research  on the Kac walk  has focused on the case $\gamma= 0$, which models  the case of so-called ``Maxwellian molecules'',
as explained below, 
and in which the exponential clocks for each pair of particles all run at the same uniform rate. \fi
The case $\gamma = 1/2$ models hard sphere collision and is the case of main physical interest.

The object of our investigation is the spectral gap for the generator of the Markov semigroup associated to the Kac walk. For any continuos  function $f $ on $\STE$,
define the Kac walk generator $\LN$ by
$$\LN f ( v) = \frac1h \lim_{h\to 0}{\rm E}\{ f ( V(h)) - f( v)\ |\  V(0) =  v\ \} \ .$$
One readily computes that
\begin{equation}
\LN f ( v) =  -{N}{\ncht}^{-1}\sum_{i<j} (v_i^2+v_j^2)^{\gamma}\
 \frac{1}{2\pi}\int_{-\pi}^\pi [f ( v) - f (R_{i,j,\theta} v)] \dd \theta
\end{equation}
where
\begin{equation*}
(R_{i,j,\theta} v)_k = \begin{cases} v_i^*(\theta) & k = i\\   v_j^*(\theta) & k = j\\   v_k & k\neq i,j\end{cases}\ .
\end{equation*}
Introducing the notation
\begin{equation*}
 [f ]^{(i,j)}( v)  :=    \frac{1}{2\pi}\int_{-\pi}^\pi f (R_{i,j,\theta} v) \dd \theta\ ,
\end{equation*}
we can write the generator more briefly as
\begin{equation*}
\LN f ( v) =  -{N}{\ncht}^{-1}\sum_{i<j} (v_i^2+v_j^2)^{\gamma}\left[f ( v) - [f ]^{(i,j)}( v) \right]  \ .
\end{equation*}


Let $\mathcal{H}_E$ denote the Hilbert space  of functions on  $\STE$  that are square integrable with respect to the uniform
probability measure on $\STE$.  For $f,g\in \mathcal{H}_E$, we denote the inner product of $f$ and $g$ by $\langle f,g\rangle$ and the
norm of $f$ by $\|f\|_2$. 

Let $\E$ be the Dirichlet form on  $\mathcal{H}$, the Hilbert space  given by
\begin{equation}\label{dir}
\E(f,f) = {N}{\ncht}^{-1}\sum_{i<j}\int_{\STE}(v_i^2+v_j^2)^\gamma (f - [f ]^{(i,j)})^2\dd \sigma\ .
\end{equation}
Then
\begin{equation}\label{dir3}
\E(f,f) = -\langle f,\LN f\rangle\ .
\end{equation}
Evidently, the uniform  distribution ($f=1$) is the unique equilibrium state for this process.

\if false
is the generator of the Kac semigroup. The different values of $\gamma$ correspond to different types of collisions between the molecules in the model. The case $\gamma = 0$ corresponds to {\em Maxwellian molecular collisions}, while the case $0 < \gamma \leq 1/2$ correspond to hard 
{\em hard molecular collisions}, and in particular $\gamma = 1/2$ corresponds to {\em hard sphere
molecular collisions}, and is especially interesting on physical grounds. The case $\gamma =1$
does not correspond to any physically meaningful molecular collision model, but it does play a significant role in the mathematical investigation of kinetic theory, and is commonly to as modeling {\em super-hard interactions}. 

\fi

The object of our investigation, the {\em spectral gap}, is the quantity
\begin{equation}\label{gapdef}
\Delta_{N,E} := \inf\{ \E(f,f) \ :\ \|f\|_2 = 1\ ,\  \langle f,1\rangle =0\ ,\ f\ {\rm symmetric}\ \}\ ,
\end{equation}
where the symmetry in question is symmetry under permutation of coordinates.  Note that the subspace of symmetric functions is invariant under
$e^{tL_{N,E}}$. It is the spectral gap in this symmetric sector that is relevant  to the Boltzmann equation; see \cite{K56,K59}. 

When attempting to determine $\Delta_{N,E}$, a significant difference between the cases $\gamma =0$ and $\gamma > 0$ lies in the
following observation: For each $m\in \N$, the space $\mathcal {P}_{m,N}$ consisting of polynomial functions of $v_1,\dots,v_N$  of degree at most 
$m$ in each $v_j$ is invariant under $\LN$, as follows easily from the definition of $\LN$. 

Thus, the eigenfunctions of $\LN$ will lie in these subspaces, which are finite dimensional. While there is no montonicity argument that provides an {\it a priori} guarantee that the eigenfunction furnishing the spectral gap will be a {\em low degree} polynomial, it is natural to guess 
that this is the case.  Simple computation then show that for all $N\geq 2$,
\begin{equation}\label{gapfu}
f_0(v) = \sum_{j=1}^N \varphi_0(v_j)  \qquad {\rm where} \qquad \varphi_0(w) = w^4 - \frac{3N}{N+2}
\end{equation}
is an eigenfunction of $\LN$ with eigenvalue 
\begin{equation}\label{gapva}
\frac12 \frac{N+2}{N-1}\ .
\end{equation}
 It is natural to guess that (\ref{gapva}) is the spectral gap $\Delta_N$, and that (\ref{gapfu}) is the gap eigenfunction. (There are a few other symmetric low-degree candidates one might try, but none of them does as well.) 

We emphasize that even for $\gamma= 0$, not all of the eigenfunctions of $\LN$ have this simple structure, though, the gap eigenfunction does, as the proof in \cite{CCL00} shows.
It is because of this fact that for $\gamma=0$, $\lim_{N\to\infty}\Delta_N$ is exactly equal to the spectral gap of the linearized
Boltzmann equation, as we discuss below.  

For $\gamma>0$, there are no polynomial eigenfunctions of $\LN$, nor are their eigenfunction of the form $\sum_{j=1}^N \varphi(v_j)$.
Nonetheless, there are good reasons to expect that the gap eigenfunction for $\gamma> 0$ must be {\em approximately of this form}
for large $N$,  as we explain below.

Therefore, define $\mathcal{A}_{E}$ to be the subspace of $\mathcal{H}_{E}$ consisting of functions $f$ such that 
$$f(v) = \sum_{j=1}^N \varphi(v_j)\ ,$$
and define
\begin{equation}\label{gapdef2}
\widehat \Delta_{N,E} := \inf\{ \E(f,f) \ :\ \|f\|_2 = 1\ ,\  \langle f,1\rangle =0\ ,\ f \in \mathcal{A}_E\ \}\ .
\end{equation}
The subspace  $\mathcal{A}_E$ is {\em not} invariant under $\LN$, and $\widehat \Delta_{N,E}$
is not an eigenvalue of $\LN$. Nonetheless, if it is indeed true that for large $N$ the gap eigenfunction is {\em close} to an element of 
$\mathcal{A}_{E}$, one would expect $\widehat \Delta_{N,E}$ to be only slightly larger than $\Delta_{N,E}$ for large $N$. 

As we shall show, there is indeed a very close connection between $\widehat \Delta_{N,E}$ and $\Delta_{N,E}$,
and that solving  the more manageable problem of proving a lower bound on the former quantity leads to a lower bound on the latter quantity.

For these reasons, we are interested in how both $\Delta_{N,E}$ and $\widehat \Delta_{N,E}$ vary with $N$ and $E$ as $\gamma$ is held fixed. The dependence on $E$ is a simple matter of scaling.

\begin{lm}\label{scale} For all $N>0$, $\gamma\in [0,1]$, and $E,E'>0$,
\begin{equation}\label{scaleB}
\Delta_{N,E} = \left(\frac{E}{E'}\right)^\gamma \Delta_{N,E'}\ ,
\end{equation}
and the same relation holds for $\widehat \Delta_{N,E}$.
\end{lm}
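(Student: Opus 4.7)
The plan is a direct scaling argument. The dilation $\Phi: \mathcal{S}_{N,E'} \to \STE$ defined by $\Phi(v) = \sqrt{E/E'}\, v$ is a bijection of energy spheres, and because both spheres carry their (unique, rotation-invariant) uniform probability measure, $\Phi$ pushes the uniform measure on $\mathcal{S}_{N,E'}$ forward to the uniform measure on $\STE$. So, given $f \in \mathcal{H}_E$, define $\tilde f \in \mathcal{H}_{E'}$ by $\tilde f = f \circ \Phi$. This map is an isometric isomorphism: $\|\tilde f\|_2 = \|f\|_2$, $\langle \tilde f, 1\rangle = \langle f, 1\rangle$, and $\tilde f$ is symmetric in its coordinates iff $f$ is.

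Next I would check that the Dirichlet form transforms by the expected factor. The collision rule (\ref{colrule}) is linear in $(v_i, v_j)$, so $R_{i,j,\theta}$ commutes with the dilation $\Phi$. Hence $[f]^{(i,j)} \circ \Phi = [\tilde f]^{(i,j)}$, and $(f - [f]^{(i,j)})^2 \circ \Phi = (\tilde f - [\tilde f]^{(i,j)})^2$. On the other hand, $(v_i^2 + v_j^2)^\gamma \circ \Phi = (E/E')^\gamma (\tilde v_i^2 + \tilde v_j^2)^\gamma$. Applying the change of variables $\Phi$ to each term of (\ref{dir}) gives
\begin{equation*}
\E(f,f) \;=\; \left(\frac{E}{E'}\right)^\gamma \mathcal{E}_{N,E'}(\tilde f, \tilde f).
\end{equation*}

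Taking the infimum of both sides over the class of admissible $f$ (symmetric, unit $L^2$ norm, mean zero) and noting that $f \mapsto \tilde f$ is a bijection of this class with the corresponding admissible class on $\mathcal{S}_{N,E'}$, one obtains $\Delta_{N,E} = (E/E')^\gamma \Delta_{N,E'}$.

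For $\widehat\Delta_{N,E}$, the same argument applies: if $f(v) = \sum_j \varphi(v_j) \in \mathcal{A}_E$, then $\tilde f(v) = \sum_j \varphi(\sqrt{E/E'}\, v_j) = \sum_j \tilde\varphi(v_j) \in \mathcal{A}_{E'}$, so the dilation restricts to a bijection between $\mathcal{A}_E$ and $\mathcal{A}_{E'}$, and the infimum in (\ref{gapdef2}) scales identically. There is no real obstacle here; the only point requiring any care is confirming that the collision rule is itself scale-equivariant so that the $(v_i^2+v_j^2)^\gamma$ prefactor is the sole source of $E$-dependence.
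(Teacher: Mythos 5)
Your proof is correct and follows essentially the same route as the paper's: both use the dilation between energy spheres, observe that it is unitary between the $L^2$ spaces, note that the collision maps $R_{i,j,\theta}$ commute with dilation so that only the prefactor $(v_i^2+v_j^2)^\gamma$ produces an $E$-dependence, and conclude by applying this unitary to the variational formulas for $\Delta_{N,E}$ and $\widehat\Delta_{N,E}$. The only cosmetic difference is that you dilate from $\mathcal{S}_{N,E'}$ to $\mathcal{S}_{N,E}$ while the paper writes the inverse map $Sf(v)=f(\sqrt{E'/E}\,v)$.
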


\begin{proof}
Suppose $f$ is any measurable function on ${\mathcal S}_{N,E'} = S^{N-1}(E')$. Define $Sf$ by
\begin{equation}\label{scaleC}
Sf( v) = f\left( \sqrt{\frac{E'}{E}} v\right)\ .
\end{equation}
Then, with the uniform probability measure on both spheres, it is clear that $f \mapsto Sf$
is unitary from $L^2({\mathcal S}_{N,E'})$ to $L^2(\STE)$, and that
\begin{equation}\label{scaling}
N{\ncht}^{-1}\sum_{i<j}\int_{{\mathcal S}_{N,E'}}(v_i^2+v_j^2)^\gamma (f - [f]^{(i,j)})^2\dd \sigma = 
\left(\frac{E'}{E}\right)^\gamma \E(Sf,Sf) \ .
\end{equation}
\end{proof}
Note that in the Maxwellian case ($\gamma = 0$), the radius of the sphere is immaterial.  On account of this lemma, we simplify our notation: We shall write
$\Delta_N$ in place of $\Delta_{N,1}$; that is, the spectral gap for $N$ particles with unit energy per particle.  Likewise, we shall write ${\mathcal E}_{N}$
to denote  ${\mathcal E}_{N,1}$, $\mathcal {H}$ to denote  $\mathcal {H}_1$, and  $\mathcal {A}$ to denote  $\mathcal {A}_1$.

The dependence of $\Delta_N$ on $N$ is not so simple to determine. The original conjecture of Kac was that for $\gamma =0$,
\begin{equation}\label{kcon}
\liminf_{N\to\infty}\Delta_{N} > 0\ .
\end{equation}
This was proved in \cite{J01}, with the exact value of $\Delta_N$ obtained in \cite{CCL00}.  Again for $\gamma =0$, the Kac conjecture  for three-dimensional momentum and energy
conserving collisions  \cite{K59} was proved in \cite{CCL03}, and with the exact value of $\Delta_N$ in this three-dimensional case in \cite{CGL}.

It is evident that for each $N$, $\widehat{\Delta}_N \ge \Delta_N$, and almost as evident that this inequality is strict. Nonetheless, the weaker conjecture that
\begin{equation}\label{kcon2}
\liminf_{N\to\infty}\widehat{\Delta}_{N} > 0\ 
\end{equation}
will be shown to be a significant stepping-stone towards the proof of (\ref{kcon}). 

\begin{thm}\label{main} For all $N\geq 3$, 
\begin{equation}\label{main1A}
\widehat{\Delta}_N \geq  \left(1 - \frac{A_N}{N^2}\right)\widehat{\Delta}_{N-1}
\end{equation}
where
\begin{equation}\label{an}
A_N = \frac{p(N) + \gamma q(N)}{r(N)}\ 
\end{equation}
with
\begin{eqnarray}
p(N) &=& 5N^7 + 31N^6 + 15N^5 + 131N^4 +256N^3 -102N^2\nonumber\\
q(N) &=& 5N^7 - 5N^6 -87N^5 -211N^4 -164N^3 + 78N^2\nonumber\\
r(N) &=& (N^2+4N -12)(N-1)^3(N+1)(N-2)\ .\label{pqrn}
\end{eqnarray}
Moreover for all $N_0$ such that  $A_N/N^2 < 1$ for all $N \geq N_0$, which is true for
all sufficiently large $N_0$, 
\begin{equation}\label{expo}
\widehat{\Delta}_N \geq   N_0^{\gamma -1} \left(\prod_{j=3}^{N_0} 
 \left[1 -   \frac{4j+1}{(j-1)^2(j+1)}\right]\right) \prod_{k \geq N_0}^N \left(1 - \frac{A_k}{k^2}\right) \ ,
\end{equation}
 and in particular, (\ref{kcon2}) is true, with a computable lower bound. 
\end{thm}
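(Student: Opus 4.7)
The strategy follows the conditioning-on-one-particle approach of \cite{CCL00,CCL03}, adapted to the weighted Dirichlet form: prove the one-step recursion (\ref{main1A}) by conditioning on the coordinate $v_N$ and invoking $\widehat{\Delta}_{N-1}$ on a rescaled sphere, then iterate (\ref{main1A}) from a suitable base case to obtain (\ref{expo}).

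By permutation symmetry and the centering constraint in (\ref{gapdef2}), one reduces to $f\in\mathcal{A}$ of the form $f(v)=\sum_{j=1}^N\varphi(v_j)$. Split the Dirichlet form (\ref{dir}) as $\mathcal{E}_N(f,f)=(I)+(II)$, where $(I)$ sums the pairs $(i,j)$ with $j<N$ and $(II)$ the pairs $(i,N)$. For $(I)$, condition on $v_N=w$: the conditional law of $(v_1,\dots,v_{N-1})$ is uniform on $\mathcal{S}_{N-1,E'}$ with $E'=(N-w^2)/(N-1)$, and $f|_{v_N=w}$ equals the constant $\varphi(w)$ plus the element $\sum_{j<N}\varphi(v_j)\in\mathcal{A}_{E'}$. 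Using the ratio $\ncht/\nmcht=N/(N-2)$ together with the definition of $\widehat{\Delta}_{N-1,E'}$ and the scaling Lemma \ref{scale}, one obtains
\[(I)\ \ge\ \frac{N-2}{N-1}\,\widehat{\Delta}_{N-1}\,\mathbb{E}_{\mathcal{S}_N}\!\Bigl[(N-v_N^2)^\gamma(N-1)^{-\gamma}\,\mathrm{Var}_{\mathcal{S}_{N-1,E'}}\!\bigl({\textstyle\sum_{j<N}}\varphi(v_j)\bigr)\Bigr].\]

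The variance on the right is not simply $(N-1)\int\varphi^2\,d\sigma$, because the one-particle marginal on $\mathcal{S}_{N-1,E'}$ depends on $E'$ and differs from the one-particle marginal on $\mathcal{S}_N$. Its exact $E'$-dependent expansion, combined with the direct evaluation of $(II)$ (which depends on $\varphi$ only through a two-variable quadratic form in $(v_i,v_N)$ whose angular integral is explicit), furnishes the estimate that controls the cross term in the identity $\|f\|_2^2=N\int\varphi^2\,d\sigma+N(N-1)(\int\varphi\,d\sigma)^2$. Assembling $(I)$ and $(II)$ and comparing with $\widehat{\Delta}_{N-1}\|f\|_2^2$ yields the ratio announced in (\ref{main1A}). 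The explicit polynomials $p(N),q(N),r(N)$ in (\ref{pqrn}) emerge from exact evaluation of the spherical moments $\mathbb{E}[v_N^2]$, $\mathbb{E}[v_N^4]$ and $\mathbb{E}[(N-v_N^2)^\gamma]$ on $\mathcal{S}_N$; the linear dependence on $\gamma$ in the numerator of $A_N$ reflects the convexity bound $(1-t)^\gamma\ge 1-\gamma t$ applied to the last moment. The principal obstacle is that this bookkeeping has no slack: the $\varphi(w)$-reinjection correction, the conditional-centering mismatch, and the moment of $(N-v_N^2)^\gamma$ have to assemble into exactly the coefficients of $p(N), q(N), r(N)$.

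Finally, (\ref{expo}) follows by iterating (\ref{main1A}) from $N$ down to a threshold $N_0$ with $A_k/k^2<1$ for all $k\ge N_0$. The base estimate $\widehat{\Delta}_{N_0}$ is handled by using $(v_i^2+v_j^2)^\gamma\ge N_0^{\gamma-1}(v_i^2+v_j^2)$ on $\mathcal{S}_{N_0}$ (valid since $v_i^2+v_j^2\le N_0$ and $\gamma\le 1$) to reduce to a Dirichlet form comparable to the Maxwellian one, and then applying the CCL00 recursion down to $j=3$, which supplies the factor $\prod_{j=3}^{N_0}[1-(4j+1)/((j-1)^2(j+1))]$ times the prefactor $N_0^{\gamma-1}$. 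Since $A_k/k^2=O(k^{-2})$ as $k\to\infty$, the tail product $\prod_{k\ge N_0}(1-A_k/k^2)$ converges to a strictly positive limit, which proves (\ref{kcon2}) with the announced explicit computable lower bound.
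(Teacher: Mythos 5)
The general induction architecture you describe --- conditioning on one coordinate, invoking $\widehat{\Delta}_{N-1}$ on a rescaled sphere, and accounting for the conditional-centering mismatch --- is in the same spirit as the paper. The paper packages this via the exact identity \eqref{rec}, the conditional Dirichlet form \eqref{cdir}, and the reduced quantity $\widehat{\Gamma}_N$ in \eqref{redinfA}, so there is no separate ``$(II)$'' term: the pairs involving $v_N$ are absorbed automatically by symmetric averaging over all $k$ and the prefactor $N/(N-1)$. Your asymmetric split into $(I)$ plus $(II)$ is not wrong in principle, but it is not what the paper does and you have not carried it out.

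There is, however, a genuine error at the heart of the proposal. You write that ``the linear dependence on $\gamma$ in the numerator of $A_N$ reflects the convexity bound $(1-t)^\gamma\ge 1-\gamma t$.'' This inequality is backwards: for $0<\gamma<1$ the map $t\mapsto(1-t)^\gamma$ is concave, and $1-\gamma t$ is its tangent line at $t=0$, so $(1-t)^\gamma\le 1-\gamma t$ there. A linear lower bound of this type does not exist, and if you tried to use it you would bound the weight $w^{(\gamma)}$ \emph{above} rather than below, destroying the estimate. What the paper actually uses is the two-sided quadratic Taylor-type inequality of Lemma~\ref{comp1}, namely $(1+x)^\gamma\ge 1+\gamma x-(1-\gamma)x^2$ for all $x>-1$, which yields the polynomial minorant $m(v)$ of $w^{(\gamma)}(v)$. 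The $\gamma$-linearity of $A_N$ then comes from the fact that both the linear and quadratic coefficients of $m$ are affine in $\gamma$, not from any concavity estimate on a single moment.

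Beyond that, the proposal identifies the right ingredients (``exact spherical moments'', the $E'$-dependent conditional variance) but does not supply the mechanism that makes the bookkeeping close with an $O(1/N^2)$ deficit. In the paper this is the combination of (a) Lemma~\ref{gprop1}, which lets one choose $\varphi\perp 1,\,v^2$, (b) the spectrum of the one-particle correlation operator $K$ (eigenvalues $\alpha_2,\alpha_4,\alpha_6$ in \eqref{eigval}) exploited through Lemma~\ref{gprop2}, and (c) the moment identities of Lemma~\ref{moments} which pass through the two-particle correlation operator $K_{N,2}$. Without the orthogonality reduction, the correlation between $\varphi(v_i)$ and $\varphi(v_j)$ decays only like $1/N$ (via $\alpha_2$), which is too slow; it is the orthogonality to $1$ and $v^2$ that promotes this to $O(1/N^2)$ via $\alpha_4$. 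Your sketch does not mention this reduction, so as written the cross-terms would dominate and the recursion would not close. The base case and the convergence of the tail product are fine.
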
 

As indicated by (\ref{main1A}), our proof of (\ref{kcon2}) proceeds by  induction on the number of particles, 
as in our previous works concerning the $\gamma= 0$ case.
Toward this end, we note that it is easy to compute $\Delta_2$.

\begin{lm}\label{Delta2}
For all $0 \leq \gamma \leq 1$,  
$$\widehat{\Delta}_2 = \Delta_2 = 2^{\gamma+1}\ .$$
\end{lm}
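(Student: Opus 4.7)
The plan is to observe that for $N=2$ the Kac walk degenerates into a particularly trivial process, for which the generator can be diagonalized by inspection. Parametrize $\mathcal{S}_{2,1} = \{v\in\R^2 : v_1^2 + v_2^2 = 2\}$ by the angular variable $\phi$, so $(v_1,v_2) = (\sqrt{2}\cos\phi,\sqrt{2}\sin\phi)$. The collision map $R_{1,2,\theta}$ in \eqref{colrule} acts by $\phi \mapsto \phi - \theta$, so averaging over $\theta \in [-\pi,\pi]$ with the uniform measure produces the full rotational average on the circle. Since the uniform measure on $\mathcal{S}_{2,1}$ is rotation-invariant, this gives
\begin{equation*}
[f]^{(1,2)}(v) = \langle f,1\rangle
\end{equation*}
for every $f\in\mathcal{H}_1$.

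Next I would compute the Dirichlet form directly. On $\mathcal{S}_{2,1}$ one has $v_1^2 + v_2^2 = 2$ identically, and for $N=2$ the prefactor in \eqref{dir} is $N\binom{N}{2}^{-1} = 2$, so
\begin{equation*}
\mathcal{E}(f,f) = 2\cdot 2^{\gamma}\int_{\mathcal{S}_{2,1}}\bigl(f - \langle f,1\rangle\bigr)^2\,\dd\sigma = 2^{\gamma+1}\,\|f - \langle f,1\rangle\|_2^{\,2}.
\end{equation*}
In particular, on the subspace orthogonal to constants, $-L_{2,1}$ equals multiplication by the scalar $2^{\gamma+1}$; equivalently, $\mathcal{E}(f,f) = 2^{\gamma+1}\|f\|_2^2$ for every $f$ with $\langle f,1\rangle = 0$.

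Finally I would check that both infima in \eqref{gapdef} and \eqref{gapdef2} are taken over nonempty sets, so the common value $2^{\gamma+1}$ is attained. For $\Delta_2$ the function $f(v) = v_1v_2$ is symmetric, orthogonal to constants, and nonzero, so $\Delta_2 = 2^{\gamma+1}$. For $\widehat{\Delta}_2$ the function $f(v) = v_1 + v_2 \in \mathcal{A}_1$ is orthogonal to constants and nonzero, so the same computation yields $\widehat{\Delta}_2 = 2^{\gamma+1}$.

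There is no real obstacle here: the only place one has to be mildly careful is in recognizing that averaging over the collision angle is literally averaging over the full energy sphere when $N=2$, which collapses the generator to a scalar multiple of the orthogonal projection onto the non-constant functions.
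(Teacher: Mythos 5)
Your proof is correct and is essentially the ``simple calculation'' the paper alludes to without writing out. The key observations --- that for $N=2$ the single collision average $[f]^{(1,2)}$ coincides with the full angular average $\langle f,1\rangle$ because $R_{1,2,\theta}$ acts transitively on the circle, and that the weight $(v_1^2+v_2^2)^\gamma = 2^\gamma$ is constant on $\mathcal{S}_{2,1}$ --- do collapse $-L_{2,1}$ to $2^{\gamma+1}$ times the projection onto $\{1\}^\perp$, so both variational quotients are identically $2^{\gamma+1}$ on admissible trial functions, and your explicit nonempty-ness checks ($v_1v_2$ symmetric, $v_1+v_2\in\mathcal{A}$) close the argument.
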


\begin{proof} This is a simple calculation.
\end{proof}

Some remarks on the connection between (\ref{expo}) and (\ref{kcon2}) are in order. 
The estimate (\ref{expo}) is only meaningful in case $A_N/N^2 < 1$. However,
since
$\lim_{N\to \infty}A_N =: 5(1+\gamma)$  exists, there exists an $N_0$ so that  $A_N/N^2 < 1$ for all $N \geq N_0$, and for such $N_0$,
$$\prod_{j \geq N_0}^\infty \left(1 - \frac{A_N}{N^2}\right) > 0\ .$$

Thus, (\ref{expo}) reduces the proof of (\ref{kcon2}) to showing that $\widehat{\Delta}_{N_0} > 0$ for {\em fixed} $N_0$. This is relatively easy:
By a fairly direct adaptation of the method used in out earlier work on $\gamma =0$, we prove in Theorem~\ref{uniform}
that for each $N_0\geq 3$, 
$$\widehat{\Delta}_{N_0} \geq \Delta_{N_0} \geq 4 N_0^{\gamma -1} \left(\prod_{j=3}^{N_0}  \left[1 -   \frac{4j+1}{(j-1)^2(j+1)}\right]\right)>0 \ .$$
Altogether, we obtain (\ref{kcon2}).

 Moreover, simple computations show that for $\gamma =1/2$, the ``hard sphere'' case, $A_N/N^2 \leq 0.542$ for all $N\geq 6$.
 Hence we may take $N_0 =6$ in this case. As we shall see, (\ref{expo}) leads to good numerical lower bounds on $\widehat{\Delta}_N$. 
 
 As we shall see, once Theorem~\ref{main} has been proved, it is relatively simple to get a lower bound on $\Delta_N$ that is uniform in $N$. 
 The key to this is a decomposition $f =g+h$ of an arbitrary admissible trial function $f$ in the variational principle for $\Delta_N$ into pieces
 $g\in \mathcal{A}_N$ and $h$ that is ``harmless''. Thus, we shall prove:

 \begin{thm}  \label{mainB} For all $N\geq 3$, 
\begin{equation}\label{main1}
\Delta_N \geq   \left(1 - \frac{A_N+ C_N}{N^2}\right)\Delta_{N-1}
\end{equation}
where
$A_N$ is given by \eqref{an} and where $C_N$ is given by 
 \begin{equation}\label{crt3}
 C_N := \sqrt{15}\frac{1-\gamma}{(N-1)^2} N^{5/2}
  \left[\frac{2}{N-1} +\frac{8N}{(N-2)(N-4)^2}\right] ^{1/2} \left(1 - \frac{15}{ {(N+1)}(N+3)}\right)^{-1/2}\ .
   \end{equation}
Moreover, for all $N_0$ such that $A_N +C_N < N^2$ for all $N>N_0$, which is true for all sufficiently large $N_0$, 
$$\Delta_N \geq  4 N_0^{\gamma -1} \left(\prod_{j=3}^{N_0} 
 \left[1 -   \frac{4j+1}{(j-1)^2(j+1)}\right]\right) \prod_{j \geq N_0}^N \left(1 - \frac{A_N+ C_N}{N^2}\right) > 0\ .$$
 In particular, 
$$
\liminf_{N\to\infty}\Delta_N > 0\ .
$$
\end{thm}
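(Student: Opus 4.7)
The plan is to reduce the variational problem \eqref{gapdef} defining $\Delta_N$ to the restricted problem \eqref{gapdef2}, so that Theorem~\ref{main} applies, at a price controlled by the projection of a test function onto the orthogonal complement of $\mathcal{A}$. Fix $f$ admissible in \eqref{gapdef}: symmetric, mean-zero, $\|f\|_2 = 1$. Let $\Pi$ denote the $L^2(\STE)$--orthogonal projection onto $\mathcal{A}$, and write $f = g + h$ with $g = \Pi f \in \mathcal{A}$ and $h = f - g$ satisfying $h \perp \mathcal{A}$. Both $g$ and $h$ are symmetric and mean-zero, and $\|g\|_2^2 + \|h\|_2^2 = 1$. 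Expanding the symmetric bilinear form,
$$\mathcal{E}(f,f) = \mathcal{E}(g,g) + 2\mathcal{E}(g,h) + \mathcal{E}(h,h),$$
and each of the three terms is treated separately.

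For the first term, since $g$ is admissible for \eqref{gapdef2}, the definition of $\widehat{\Delta}_N$, Theorem~\ref{main}, and the inequality $\widehat{\Delta}_{N-1} \geq \Delta_{N-1}$ (valid since $\mathcal{A}$ sits inside the symmetric sector) yield
$$\mathcal{E}(g,g) \geq \widehat{\Delta}_N \|g\|_2^2 \geq \bigl(1 - \tfrac{A_N}{N^2}\bigr)\Delta_{N-1}\,\|g\|_2^2.$$
For the third term, the orthogonality $h \perp \mathcal{A}$ removes the obvious low-eigenvalue contributions, and in particular strips off the degree-$4$ candidate \eqref{gapfu}, so a Poincar\'e estimate on the orthogonal complement---using the invariance under $\LN$ of low-degree polynomial subspaces together with explicit spherical moments---should produce $\mathcal{E}(h,h) \geq \mu_N\|h\|_2^2$ with $\mu_N$ comfortably larger than $\Delta_{N-1}$. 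The constants $\sqrt{15}$ and $1-15/((N+1)(N+3))$ appearing in \eqref{crt3} are the fingerprints of the relevant moments of the quartic polynomial $\varphi_0(w) = w^4 - 3N/(N+2)$ from \eqref{gapfu}, used here to normalize the projection and compute $\mu_N$.

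The main obstacle, and the source of the factor $(1-\gamma)$ in $C_N$, is the cross term $\mathcal{E}(g,h)$. The vanishing of $C_N$ at $\gamma = 1$ is a signal that for $\gamma = 1$ the weight $(v_i^2+v_j^2)$ cooperates with the fixed-energy constraint defining $\STE$ so that the cross term can be absorbed into the bounds on $\mathcal{E}(g,g)$ and $\mathcal{E}(h,h)$. For general $\gamma \in [0,1]$, I would write
$$(v_i^2+v_j^2)^\gamma = c_1 + c_2(v_i^2 + v_j^2) + \bigl[(v_i^2+v_j^2)^\gamma - c_1 - c_2(v_i^2+v_j^2)\bigr],$$
choosing $c_1, c_2$ so that the bracketed remainder has size of order $(1-\gamma)$ (via concavity of $t\mapsto t^\gamma$ on $[0,1]$). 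On the $c_1$- and $c_2(v_i^2+v_j^2)$-pieces the cross term is handled by the clean $\gamma=0$ and $\gamma=1$ orthogonality arguments respectively; the remainder is then estimated by Cauchy--Schwarz against $\mathcal{E}(g,g)$ and $\mathcal{E}(h,h)$, and explicit spherical moments produce precisely the prefactor of \eqref{crt3}, giving
$$2|\mathcal{E}(g,h)| \leq \tfrac{C_N}{N^2}\Delta_{N-1}\bigl(\|g\|_2^2 + \|h\|_2^2\bigr).$$
Combining the three bounds yields $\mathcal{E}(f,f) \geq \bigl(1 - (A_N + C_N)/N^2\bigr)\Delta_{N-1}$, and taking the infimum over admissible $f$ establishes \eqref{main1}.

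The remaining assertions follow by iteration: cascading \eqref{main1} from $N$ down to $N_0$ and combining with the explicit base-step bound $\Delta_{N_0} \geq 4 N_0^{\gamma-1}\prod_{j=3}^{N_0}\bigl[1 - (4j+1)/((j-1)^2(j+1))\bigr]$ alluded to after Theorem~\ref{main} gives the stated infinite product. Since $A_N \to 5(1+\gamma)$ and $C_N = O(N^{1/2})$, the series $\sum_N (A_N + C_N)/N^2$ converges, so the infinite product converges to a strictly positive limit, yielding $\liminf_{N\to\infty}\Delta_N > 0$.
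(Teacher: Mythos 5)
Your decomposition $f = g + h$ with $g$ the projection onto $\mathcal{A}$ is, for symmetric $f$, equivalent to the paper's decomposition (the paper shows that $h$ being in the null space of $\Pi P^{(\gamma)}\Pi$ is equivalent to $P_k h = 0$ for every $k$, and for symmetric $h$ that condition is exactly $h \perp \mathcal{A}$). But after that point your proof diverges structurally from the paper's, and two of your three key estimates are not supportable as stated.

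The paper does not expand $\mathcal{E}(f,f)$ directly. It first passes through the induction bound $\mathcal{E}_N(f,f) \geq \tfrac{N}{N-1}\Delta_{N-1}\mathcal{F}_N(f,f)$ (Theorem~\ref{thm1}), and only then expands $\mathcal{F}_N$ using $f=g+h$ and $P_k h = 0$; the whole point of the machinery in Section~$4$ --- Lemma~\ref{ns1}, Lemma~\ref{crt}, Lemma~\ref{crtB} --- is that $\mathcal{F}_N$, not $\mathcal{E}$, factors transparently once $P_k h = 0$. Your version tries to bound the three terms of $\mathcal{E}(g,g)+2\mathcal{E}(g,h)+\mathcal{E}(h,h)$ directly, and here there are real gaps. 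Your bound $\mathcal{E}(h,h)\geq\mu_N\|h\|^2$ with ``$\mu_N$ comfortably larger than $\Delta_{N-1}$'' is not justified and is not the mechanism: there is no Poincar\'e inequality on $\mathcal{A}^\perp$ being used. What actually saves the $h$-term is that, inside $\mathcal{F}_N$, the subtraction $h - P_k h$ is trivial (equals $h$), so $\mathcal{F}_N(h,h)=\int W^{(\gamma)} h^2\,\dd\sigma$, controlled by the completely elementary Lemma~\ref{crtB} via the pointwise bound $W^{(\gamma)} \geq 1 - (1-\gamma)/(N-1)$. That is a bound on a weight integral, not a spectral statement. Similarly, the cross term in the paper is not $\mathcal{E}(g,h)$ but $\frac{2}{N}\sum_k \int w^{(\gamma)}(v_k)\,g\,h\,\dd\sigma$ (again with the $P_k g$ piece killed by $P_k h = 0$), and the $(1-\gamma)$ in $C_N$ comes from Taylor-expanding $w^{(\gamma)}(v_k)=\left(\frac{N-v_k^2}{N-1}\right)^\gamma$ (the one-variable weight appearing in $\mathcal{F}_N$), not from splitting $(v_i^2+v_j^2)^\gamma$ inside the Dirichlet form; your version of this split acts on the wrong object.

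Your interpretation of the constants in $C_N$ is also wrong. The $\sqrt{15}$ and $\bigl(1 - 15/((N+1)(N+3))\bigr)^{-1/2}$ have nothing to do with the quartic ansatz $\varphi_0(w)=w^4 - 3N/(N+2)$: the $15$ is the coefficient in the $K$-operator eigenvalue $\alpha_6 = -15/((N-1)(N+1)(N+3))$ entering the norm relation between $\varphi$ and $g$ (Lemma~\ref{gprop2}), and the other bracket $\left[\tfrac{2}{N-1}+\tfrac{8N}{(N-2)(N-4)^2}\right]$ is the two-particle correlation bound $\ncht^{-1}\sum_{i<j}\|P_{\{i,j\}}h\|^2$ of Theorem~\ref{correl2}. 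None of this is accessible from your decomposition as you've used it. Finally, a small point: $C_N$ tends to a constant ($\sqrt{30}(1-\gamma)$), not $O(N^{1/2})$; this does not affect convergence of $\sum(A_N+C_N)/N^2$ but is another sign that the bookkeeping has slipped. The fix is to follow the paper's route: apply Theorem~\ref{thm1} first, then decompose $\mathcal{F}_N(f,f)$ using $P_k h = 0$.
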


\subsection{Application to the Kac-Boltzmann equation}

Kac's original motivation for introducing the Kac model was to study the {\em Kac-Boltzmann} equation, which is a caricature of the spatially homogeneous
Boltzmann equation for one dimensional velocities:
\if false
As we shall explain at the end of the paper, the methods employed here apply also to three-dimensional
velocities and collisions that preserve energy and momentum, leading to the standard spatially homogeneous Boltzmann equation. However, \fi 
\begin{equation}\label{KNE}
\frac{\partial}{\partial t} f(v,t) =  \Q(f,f)(v,t)
\end{equation}
where $f(v,t)$ gives the probability that a randomly selected molecule will have velocity $v$ at time $t$ and where
the {\em collision kernel}  $Q(f,f)$ for the Kac-Boltzmann equation is given by
\begin{equation}\label{kbk}
\Q(f,f) = \frac{1}{\pi}\int_{\R}\int_{-\pi}^{\pi} (v^2+w^2)^\gamma\left[
f(v_*)f(w_*) - f(v)f(w)\right]\dd \theta \dd w \ .
\end{equation}
with
\begin{equation}\label{kbkb}
v_* = v\cos\theta + w\sin\theta \qquad{\rm and}\qquad w_* = -v\sin\theta+ w\cos\theta\ .
\end{equation}

 Kac proved that there is a close relation between the Kac walk, and the Kac-Boltzmann equation through his
notion of {\em propagation of chaos}. Our next theorem shows another aspect of this close relation. 
It concerns the {\em spectral gap of the linearized collision operator}. 

To explain this, we first observe  that $\Q(f,f) = 0$ if and only if $f$ is a centered Maxwellian density; i.e.,
\begin{equation}\label{max}
f(v) = \frac{1}{\sqrt{2\pi \Theta}}e^{-v^2/2\Theta}
\end{equation}
for some $\Theta>0$, where $\Theta$ is the second moment of the probability density 
in (\ref{max}). By an appropriate choice of units, we may suppose that $\Theta =1$, and we define
the {\em unit Maxwellian density} $M$ by
\begin{equation}\label{max2}
M(v) = \frac{1}{\sqrt{2\pi }}e^{-v^2/2}
\end{equation}

The {\em linearized Kac-Boltzmann operator} ${\mathcal L}$ is obtained by considering
small perturbations of $M$ of the form 
\begin{equation}\label{max3}
f(v) = M(v)[1 + h(v)]
\end{equation}
where $h$ satisfies
\begin{equation}\label{max4}
\int_\R  h(v)M(v)\dd v = \int_\R v^2h(v)M(v)\dd v = 0\ ,
\end{equation}
and $h(v) \geq -1$ for all $v$. In this case, (\ref{max3}) defines a probability density
with unit second moment, as does $M$. Thinking of $h$ as small, one finds that
with $f$ given by  (\ref{max3}),
\begin{equation}\label{kbk2}
\Q(f,f) = \frac{1}{\pi}\int_{\R}\int_{-\pi}^{\pi} (v^2+w^2)^\gamma [
h(v_*) + h(w_*) - h(v) -h(w)]M(v)M(w)
\dd \theta \dd w + {\mathcal O}(h^2)\ .
\end{equation}
\if false
Since ${\displaystyle  \int_{-\pi}^{\pi} h(v_*) \dd \theta =  \int_{-\pi}^{\pi} h(w_*) \dd \theta}$
and since ${\displaystyle  \int_\R  h(v)M(v)\dd v = 0}$, if we define
\fi
We define
\begin{equation}\label{lin}
{\mathcal L} h(v) =  \frac{1}{\pi}\int_{\R}\int_{-\pi}^{\pi} (v^2+w^2)^\gamma [
h(v_*)  + h(w_*) - h(v) - h(w)]M(w)
\dd \theta \dd w\ ,
\end{equation}
then we can rewrite (\ref{kbk2}) as
\begin{equation}\label{kbk2B}
\Q(f,f) = M(v)  {\mathcal L} h(v) +  {\mathcal O}(h^2)\ .
\end{equation}
It is easily checked that ${\mathcal L}$ is self adjoint on $L^2(\R,M(v)\dd v)$.  We compute
\begin{multline}\label{lin2}
\langle h,{\mathcal L} h\rangle_{L^2(\R,M(v)\dd v)}
 =\\  \frac{1}{\pi}\int_{\R}\int_{\R}\int_{-\pi}^{\pi} (v^2+w^2)^\gamma [
h(v)h(v_*) + h(v)h(w_*) - h^2(v) - h(v)h(w)]M(v)M(w)
\dd \theta \dd w\dd v\ ,
\end{multline}
It is easy to see that the functions $h(v) =1$ and $h(v) = v^2$ are in the nullspace of ${\mathcal L}$.

We then define $\Lambda$, the {\em spectral gap}  of ${\cal L}$,  by
\begin{equation}\label{lin3}
\Lambda = \inf\left\{- \frac{\langle h,{\mathcal L} h\rangle_{L^2(\R,M(v)\dd v)} }{\|h\|^2_{L^2(\R,M(v)\dd v)} }\ :\   \langle h,1\rangle_{L^2(\R,M(v)\dd v)}  =  \langle h,v^2\rangle_{L^2(\R,M(v)\dd v)} = 0\ .\right\}
\end{equation}

Our terminology would suggest that the null space of ${\mathcal L}$ is spanned by $1$ and $v^2$,
and that ${\mathcal L}$ is negative semi-definite.  This turns out to be the case. It is easy to see this for
$\gamma = 0$, since then ${\mathcal L}$ can be expressed as an average over Mehler kernels, and the spectrum of ${\mathcal L}$ computed exactly. For other values of $\gamma$, direct computation 
is not possible.

Our next theorem, together with our analysis of the Kac master equation,  shows that not only  is ${\mathcal L}$  negative semidefinite, but that $\Lambda$ is 
strictly positive, and moreover, 
provides  a computable lower bound on $\Lambda$.

\begin{thm}\label{apl}   For all $0 \leq \gamma < 1$, 
\begin{equation}\label{lin4}
\Lambda \geq \limsup_{N\to\infty}\widehat \Delta_N\ .
\end{equation}
\end{thm}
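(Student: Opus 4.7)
The plan is to construct, for each admissible $h$ in the variational principle (\ref{lin3}) for $\Lambda$, a trial function $f_N \in \mathcal{A}$ in the variational principle (\ref{gapdef2}) for $\widehat{\Delta}_N$ whose Rayleigh quotient converges as $N \to \infty$ to $-\langle h,\mathcal{L}h\rangle_{L^2(M\,dv)}/\|h\|^2_{L^2(M\,dv)}$, and then to take $\limsup_N$ followed by an infimum over $h$ to obtain (\ref{lin4}). Specifically, for $h$ satisfying $\int h\,M\,dv = \int v^2 h\,M\,dv = 0$, I would set
\[
f_N(v) \;:=\; \sum_{j=1}^N h(v_j) \;-\; c_N, \qquad c_N \;:=\; N \int_{\mathcal{S}_{N,1}} h(v_1)\,d\sigma,
\]
so that $\langle f_N,1\rangle = 0$ and $f_N \in \mathcal{A}$ give the admissible bound $\widehat{\Delta}_N \le \mathcal{E}_N(f_N,f_N)/\|f_N\|_2^2$.

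For the Dirichlet form, since $h(v_k)$ is unchanged by $R_{i,j,\theta}$ whenever $k \notin \{i,j\}$, full permutation symmetry collapses the pair sum to a single representative pair, yielding
\[
\mathcal{E}_N(f_N,f_N) \;=\; N \int_{\mathcal{S}_{N,1}} (v_1^2+v_2^2)^\gamma\, \Phi(v_1,v_2)^2\, d\sigma,
\]
where $\Phi(v,w) := h(v)+h(w) - [h(v)+h(w)]^{(1,2)}$. Equivalence of ensembles for the uniform measure on $\mathcal{S}_{N,1}$ then gives $N^{-1}\mathcal{E}_N(f_N,f_N) \to \int (v^2+w^2)^\gamma \Phi(v,w)^2\, M(v)M(w)\, dv\, dw$. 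Because the rotation average $T := [\,\cdot\,]^{(1,2)}$ is an orthogonal projection on $L^2((v^2+w^2)^\gamma M(v)M(w)\,dv\,dw)$ (the weight is rotation invariant), the identity $\int \Phi^2 = \int \phi(I-T)\phi$ applied to $\phi(v,w) = h(v)+h(w)$, combined with the $v \leftrightarrow w$ symmetry of the integrand, rewrites this limit as exactly $-\langle h,\mathcal{L}h\rangle_{L^2(M\,dv)}$; the factor $1/\pi$ in (\ref{lin}) arises from the $v \leftrightarrow w$ symmetrization of the $1/(2\pi)$ appearing in $[\,\cdot\,]^{(1,2)}$.

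For the norm, expanding gives $\|f_N\|_2^2 = N\int h(v_1)^2\,d\sigma + N(N-1)\int h(v_1)h(v_2)\,d\sigma - c_N^2$. The first term tends to $N\|h\|^2_{L^2(M)}$ by equivalence of ensembles. The admissibility conditions $\int h\,M = \int v^2 h\,M = 0$ are precisely what force the spherical pair correlation $\int h(v_1)h(v_2)\,d\sigma$ to be $o(1/N)$, since the leading $O(1/N)$ correction from the global constraint $\sum v_j^2 = N$ is proportional to $|\int v^2 h\,M|^2$ and therefore vanishes. Hence both the second term and $c_N^2$ are $o(N)$, and $\|f_N\|_2^2/N \to \|h\|^2_{L^2(M)}$. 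Combining with the Dirichlet-form limit yields $\limsup_N \widehat{\Delta}_N \le -\langle h,\mathcal{L}h\rangle_{L^2(M)}/\|h\|^2_{L^2(M)}$, and minimizing over admissible $h$ gives (\ref{lin4}).

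The main technical obstacle is rigorous justification of the two equivalence-of-ensembles limits in the presence of the unbounded weight $(v^2+w^2)^\gamma$ and a possibly unbounded test function $h$. I would first prove the inequality for $h$ bounded with bounded support, a dense subset of the admissible class, where standard local-CLT density bounds for marginals of $\sigma$ together with the hypothesis $\gamma < 1$ (which keeps $(v^2+w^2)^\gamma$ uniformly integrable against the relevant spherical densities and their Gaussian limits) allow termwise passage to the limit; I would then extend the conclusion to all admissible $h$ in the form domain of $\mathcal{L}$ by continuity of the quadratic form in (\ref{lin2}) under $L^2(M)$-approximation, combined with routine continuity of the numerator of the Rayleigh quotient under the same approximation.
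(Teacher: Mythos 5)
Your proof is correct and follows essentially the same strategy as the paper: construct a trial function in $\mathcal{A}_N$ of the form $\sum_j \varphi(v_j)$ built from the one‑dimensional trial function $h$, use the correlation operator $K$ (equivalence of ensembles) to show that numerator and denominator of the Rayleigh quotient converge respectively to $-\langle h,\mathcal{L}h\rangle_{L^2(M)}$ and $\|h\|^2_{L^2(M)}$, and then take the infimum over $h$. The only differences are cosmetic: you subtract only the spherical mean rather than also projecting out the $v^2$ component (which the paper does to make the $O(1/N)$ correlation analysis cleaner), and you use the squared form of the Dirichlet form so that $f_N-[f_N]^{(1,2)}$ is a function of $v_1,v_2$ alone, avoiding the explicit cross‑term estimate against $K_{(N,2)}$ that the paper performs.
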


If one is mainly interested in lower bounds on $\Lambda$, Theorem~\ref{apl} shows that 
 $\widehat{\Delta}_N$ is the main quantity of interest associated to the Kac walk, moreso than ${\Delta}_N$
 in this particular regard. 
 
\begin{thm} For $\gamma = 1/2$,
$$\Lambda \geq 0.0263\ .$$
\end{thm}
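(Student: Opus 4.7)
The plan is to combine Theorem~\ref{apl} with the quantitative lower bound \eqref{expo} of Theorem~\ref{main}, specialized to $\gamma = 1/2$. Since Theorem~\ref{apl} gives $\Lambda \geq \limsup_{N\to\infty}\widehat{\Delta}_N$, it suffices to produce any $N$-uniform lower bound on $\widehat{\Delta}_N$, which is exactly the content of \eqref{expo}. Because each factor $1 - A_k/k^2$ on the right of \eqref{expo} lies in $(0,1)$, the right-hand side is nonincreasing in $N$, so letting $N \to \infty$ yields
$$
\Lambda \;\geq\; N_0^{-1/2}\,F(N_0)\,P(N_0),
$$
where
$$
F(N_0) := \prod_{j=3}^{N_0}\!\left[1 - \frac{4j+1}{(j-1)^2(j+1)}\right], \qquad P(N_0) := \prod_{k=N_0}^{\infty}\!\left(1 - \frac{A_k}{k^2}\right),
$$
for any $N_0$ such that $A_N < N^2$ for all $N \geq N_0$. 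For $\gamma = 1/2$ the authors have already observed that any $N_0 \geq 6$ is admissible, since $A_N/N^2 \leq 0.542$ throughout that range.

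From here the proof is entirely numerical. The prefactor $N_0^{-1/2}$ and the rational product $F(N_0)$ are evaluated exactly. The main work is the infinite product $P(N_0)$, which I would handle by splitting
$$
P(N_0) \;=\; \prod_{k=N_0}^{K}\!\left(1 - \frac{A_k}{k^2}\right) \;\cdot\; \prod_{k>K}\!\left(1 - \frac{A_k}{k^2}\right),
$$
computing the finite factor exactly from the explicit polynomials $p,q,r$ in \eqref{pqrn}, and bounding the tail from below by the elementary inequality $\log(1-x) \geq -x/(1-x)$, valid for $0 \leq x < 1$:
$$
\log \prod_{k>K}\!\left(1 - \frac{A_k}{k^2}\right) \;\geq\; -\sum_{k>K}\frac{A_k/k^2}{1 - A_k/k^2}.
$$
Since $A_k \to 15/2$, a crude but effective uniform bound such as $A_k \leq 11$ for $k \geq 10$ is immediate from \eqref{pqrn}, and the tail sum is then controlled by a constant times $\sum_{k>K} k^{-2} \leq 1/K$, which can be driven arbitrarily close to zero by taking $K$ large.

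The final step is to optimize over $N_0$: a small $N_0$ makes $N_0^{-1/2}$ large but forces $P(N_0)$ to absorb the worst factors (for instance, $1 - A_6/36 \approx 0.458$ at $N_0 = 6$), while a larger $N_0$ shrinks the prefactor but transfers the bad factors into $F(N_0)$, where they can be counterbalanced by the rapidly convergent terms $1 - (4j+1)/((j-1)^2(j+1))$. The expected main obstacle is not conceptual but purely one of precise numerical book-keeping: one must verify that the best choice of $N_0$, together with a sufficiently large cutoff $K$, genuinely produces a value exceeding $0.0263$. A moderate $N_0$ (on the order of a few dozen) combined with a cutoff $K$ of several hundred should suffice, and all estimates along the way involve rigorous rational arithmetic on the explicit polynomials \eqref{pqrn}, so the final bound is certified.
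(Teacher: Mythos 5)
Your proposal matches the paper's proof: both invoke Theorem~\ref{apl} to reduce the problem to a uniform lower bound on $\widehat{\Delta}_N$, both apply the explicit estimate \eqref{expo} with a suitably chosen $N_0$ (the paper finds $N_0 = 10$ optimal), and both evaluate a finite truncation of the infinite product exactly and bound the tail (the paper uses an integral comparison after computing up to $10^6$; your $\log(1-x)\geq -x/(1-x)$ plus $\sum_{k>K}k^{-2}\leq 1/K$ accomplishes the same thing). The only differences are in the numerical bookkeeping parameters, not in the method.
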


\begin{proof}  By Theorem~\ref{apl}, it suffices to produce a uniform lower bound on $\widehat{\Delta}_N$ for $\gamma =1/2$. 
Simple calculations show that $A_6/6^2 \leq 0.54$, and is below this value for all $k > 6$. Thus, we may take $N_0= 6$ in 
(\ref{expo}).  However, experimentation shows that $N_0 = 10$ yields the optimal result. We  evaluate
$$\prod_{k=11}^{10^6} (1 - a_k/k^2) = 0.5067...$$
numerically, and bound the remainder by an integral comparison. Combining the terms, we obtain $\widehat{\Delta}_N \geq 0.0263$ 
for all $N\geq 2$. 
\end{proof} 

\begin{remark} If carry out the analogous estimate for $\gamma=0$, we obtain the bound 
$$\Lambda \geq 0.0592$$
in this case. For $\gamma= 0$, the actual value is $\Lambda =1/2$, as is easily computed and well known, see  \cite{CCL03} for discussion.
Thus, the lower bound we obtain by this procedure  falls short of the actual value by less than one order of magnitude. We may expect the
hard sphere bound to be comparably accurate. 
\end{remark}

 The rest of the paper is organized as follows: 
 In Section 2 we  set up an induction scheme, and use it to prove a
 simple estimate on the gap that  is uniform in $N$  if the
 collision rate is proportional to $E_{i,j}^\gamma$ for $\gamma =0$ and $\gamma=1$, but only in these cases. The  result for $\gamma =1$ is foreshadowed by the work of Villani \cite{V} on entropy production in the super-hard sphere case, though it is not a consequence of his work. 
Analysis of this proof highlights the difficulties to be overcome for $0< \gamma < 1$.
 
 In Section 3 we show how the induction scheme may be adapted to prove a lower bound on $\widehat{\Delta}_N$ that is independent of $N$. 
In Section 4,  we introduce a decomposition of trial functions to be used in the variational formula for the spectral gap ${\Delta}_N$, and show how this decomposition
reduces the problem of bounding  ${\Delta}_N$ to that of bounding  $\widehat{\Delta}_N$.   In Section 5 we
prove Theorem~\ref{apl}, and show how  Theorem \ref{main} leads to
 an explicit estimate for the linearized Kac-Boltzmann equation. Certain technical results concerning correlations on the sphere, which may have other applications,
 are proved in Section 6, and a brief Section 7 explain a method for evaluating certain infinite products that we encounter here.

\section{The induction}

Fix an admissible trial function $f$ in the variational formula for $\Delta_N$. To do the induction, define the {\em conditional Dirichlet form} $\E(f,f|v_k)$
given by
\begin{equation}\label{cdir}
{\mathcal E}_N (f,f|v_k) = 
{(N-1)}{\nmcht}^{-1}\sum_{i<j; i,j\neq k}\int_{S^{N-2}(\sqrt{N -v_k^2})}(v_i^2+v_j^2)^\gamma (f - [f]^{(i,j)})^2\dd \sigma\ ,
\end{equation}
where the integration on the right is only over the ``slices'' of $S^{N-1}(\sqrt{N})$ at constant values of $v_k$, so that
the result is still a non-trivial function of $v_k$. 
Let $\dd \nu_N(v_k)$ be the marginal distribution induced on $[-\sqrt{N},\sqrt{N}]$ by the map
$ v \mapsto v_k$ and the uniform probability measure $\dd\sigma$ on 
$S^{N-1}(\sqrt{N})$. 

One easily checks that
\begin{equation}\label{rec}
{\mathcal E}_N(f,f) = \frac{N}{N-1}
\left(\frac{1}{N}\sum_{k=1}^N  \int_{-\sqrt{N}}^{\sqrt{N}} {\mathcal E}_N(f,f|v_k) \dd \nu_N(v_k)\right)\ .
\end{equation}

Furthermore, 
$${\mathcal E}_N(f,f|v_k)= {\mathcal E}_{N-1,\sqrt{N-v_k^2}}(g,g)$$
where $g$ is the restriction of  $f$ to the 
slice of $S^{N-1}(\sqrt{N})$ at constant  $v_k$. If $g$ were orthogonal to the constants in $L^2(S^{N-2}(\sqrt{N-v_k^2}))$, we could estimate the right hand side
in terms of $\Delta_{N-1,\sqrt{N-v_k^2}}$. By Lemma~\ref{scale}, 
\begin{equation}\label{scale2}
 \Delta_{N-1,\sqrt{N-v_k^2}} = \left(\frac{N - v_k^2}{N-1}\right)^\gamma \Delta_{N-1}\ .
 \end{equation}
Combining this with (\ref{rec}) would yield an estimate for $\Delta_N$ in terms of $\Delta_{N-1}$.

However, even if $f$ is orthogonal to the constants in $L^2(\STE)$, it need not be the case that $g$ 
is orthogonal to the  constants  on its slice.
To correct for this, we need to add and subtract the average of $f$ over these slices. The average of $f$ over the $k$th   slice is
the average of  $f(Rv)$ over all rotations that fix the $k$th axis, which in turn is the orthogonal projection in $L^2(\STE)$ onto the subspace of functions depending {\em only} on the coordinate $v_k$.  For each $k=1,\dots, N$, we define $P_k$ to be this orthogonal projection. 
Note that since $P_k f$ depends only on $v_k$, 
$${\mathcal E}_N(f,f|v_k)  = {\mathcal E}_N(f- P_kf,f-P_kf |v_k) \ .$$
Then by the definition of the spectral gap and scaling relation (\ref{scale2}),
\begin{equation*}
 \mathcal{E}_N(f- P_kf,f-P_kf |v_k) \geq \left(\frac{N - v_k^2}{N-1}\right)^\gamma\Delta_{N-1} \Vert f - P_k f\Vert^2_{L^2(S^{N-2}(\sqrt{N -v_k^2}))}\ .
\end{equation*}
 
Finally, we have the obvious identity
$$ \Vert f - P_k f\Vert^2_{L^2(S^{N-2}(\sqrt{N -v_k^2}))} =
 \Vert f \Vert^2_{L^2(S^{N-2}(\sqrt{N -v_k^2}))} -
  \Vert  P_k f\Vert^2_{L^2(S^{N-2}(\sqrt{N -v_k^2}))}\ .$$

Altogether, going back to (\ref{rec}), one has
\begin{equation}\label{rec2}
 \mathcal{E}_N (f,f) \geq \frac{N}{N-1}\Delta_{N-1}
\left(\frac{1}{N}\sum_{k=1}^N  \int_{S^{N-1}(\sqrt{N})}\left(\frac{N - v_k^2}{N-1}\right)^\gamma
\left[f^2 - |P_kf|^2\right]  \dd \sigma\right)\ .
\end{equation}
To summarize conveniently our conclusions, we define the quadratic form $\mathcal{F}_N(f,f) $ on $L^2(S^{N-1}(\sqrt{N}))$
as follows:

\begin{defi}\label{fformdef}
 \begin{equation}\label{find2}
\mathcal{F}_N(f,f) := 
  \frac{1}{N} \sum_{k=1}^N\left[   \int_{S^{N-1}(\sqrt{N})} w^{(\gamma)}(v_k)[f - P_kf]^2\dd \sigma
 \right]\ 
\end{equation}
where
 \begin{equation}\label{find3}
 w^{(\gamma)}(v) = \left(\frac{N - v^2}{N-1}\right)^\gamma\ .
 \end{equation}
\end{defi}
We have proved:
 \begin{thm}\label{thm1} For all symmetric $f\in L^2(S^{N-1}(\sqrt{N}))$ with $\|f\|^2_2 =1$ and with $f$ orthogonal to the constants,
 \begin{equation}\label{ind1}
 \mathcal{E}_N (f,f) \geq \left[\frac{N}{N-1}\Delta_{N-1}\right] \mathcal{F}_N(f,f) \ .
\end{equation}
\end{thm}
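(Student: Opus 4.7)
The plan is to execute the decomposition already sketched informally in the paragraphs preceding the statement, carefully justifying each step. I would begin by conditioning on each individual coordinate $v_k$: by the product/tensor structure of the Dirichlet form, namely the identity (\ref{rec}), the global form $\mathcal{E}_N(f,f)$ equals $\frac{N}{N-1}$ times the average over $k$ of the integrals of the conditional forms $\mathcal{E}_N(f,f|v_k)$ against the marginal $\dd\nu_N(v_k)$. On each slice $S^{N-2}(\sqrt{N-v_k^2})$ at fixed $v_k$, the function is symmetric in the remaining $N-1$ coordinates (inherited from the symmetry of $f$), so the slice form coincides with the Dirichlet form $\mathcal{E}_{N-1,\sqrt{N-v_k^2}}$ applied to a symmetric function.

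Next I would subtract off $P_k f$ so that what remains, restricted to the slice, is orthogonal to the constants on that slice. Since $P_k f$ is the conditional expectation of $f$ given $v_k$, the difference $f - P_k f$ has zero mean on every slice, which is exactly the orthogonality required to invoke the variational characterization of the spectral gap at particle-number $N-1$ and energy $\sqrt{N-v_k^2}$. Moreover, because $P_k f$ depends only on $v_k$, the averaging operators $[\cdot]^{(i,j)}$ with $i,j\neq k$ annihilate $f - P_k f - (f - P_k f)$ is actually simpler: $(P_k f) - [P_k f]^{(i,j)} = 0$ for every such pair, so $\mathcal{E}_N(f,f|v_k) = \mathcal{E}_N(f - P_k f, f - P_k f|v_k)$.

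Applying the variational definition of $\Delta_{N-1,\sqrt{N-v_k^2}}$ to the slice restriction of $f - P_k f$, and then using the scaling identity from Lemma~\ref{scale} to rewrite $\Delta_{N-1,\sqrt{N-v_k^2}}$ as $\left(\tfrac{N-v_k^2}{N-1}\right)^\gamma \Delta_{N-1}$, I obtain the pointwise (in $v_k$) lower bound
\[
\mathcal{E}_N(f,f|v_k) \ \geq\ \left(\frac{N-v_k^2}{N-1}\right)^\gamma \Delta_{N-1}\, \|f - P_k f\|^2_{L^2(\mathrm{slice})}.
\]
Substituting into (\ref{rec}), averaging over $k$, and recognizing $w^{(\gamma)}(v_k) = \left(\tfrac{N-v_k^2}{N-1}\right)^\gamma$ yields exactly the claimed inequality with the form $\mathcal{F}_N$ on the right, proving Theorem~\ref{thm1}.

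The main (and essentially only) subtlety is verifying that the slice restriction of $f - P_k f$ satisfies the hypotheses of the variational principle defining $\Delta_{N-1,\sqrt{N-v_k^2}}$: symmetry in the remaining $N-1$ coordinates and orthogonality to the constants on the slice. Both are immediate from the symmetry of $f$ and the fact that $P_k f$ is the conditional expectation of $f$ given $v_k$. No further obstacles arise, since the scaling lemma handles the dependence of the slice energy on $v_k$ cleanly, and the weight $w^{(\gamma)}(v_k)$ emerges as a direct consequence of that scaling.
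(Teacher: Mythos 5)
Your proposal is correct and follows essentially the same path as the paper's own derivation: decompose via the conditional Dirichlet forms and the identity (\ref{rec}), subtract $P_kf$ (noting $P_kf - [P_kf]^{(i,j)} = 0$ for $i,j\neq k$ so the slice form is unchanged), invoke the variational characterization of $\Delta_{N-1,\sqrt{N-v_k^2}}$ on each slice together with the scaling Lemma~\ref{scale}, and re-average to recover $\mathcal{F}_N$. The only difference is cosmetic — you keep $[f-P_kf]^2$ throughout while the paper passes briefly through $f^2 - |P_kf|^2$ via the Pythagorean identity before landing on the same Definition~\ref{fformdef} — and you are a touch more explicit than the paper in checking that the slice restriction of $f-P_kf$ inherits symmetry in the remaining coordinates, which is indeed what licenses the use of the (symmetric-sector) gap $\Delta_{N-1,\sqrt{N-v_k^2}}$.
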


We therefore define
\begin{defi}
\begin{equation}\label{gapdefinition}
\Gamma_{N} := \inf\{ \mathcal{F}(f,f) \ :\ \|f\|_2 = 1\ ,\  \langle f,1\rangle =0\ ,\ f\ {\rm symmetric}\ \}\ ,
\end{equation}
\end{defi}

Combining this definition with (\ref{gapdef}), Theorem~\ref{thm1} yields
\begin{equation}\label{red1}
\Delta_N \geq   \Delta_{N-1} \frac{N}{N-1} \Gamma_N
\end{equation}

If we can succeed in showing that for all $N\geq 3$
\begin{equation}\label{red2}
\Gamma_N \geq \frac{N-1}{N}(1-a_N)
\end{equation}
where 
\begin{equation}\label{red3}
0\leq a_N \leq 1 \quad{\rm for\ all}\ N\qquad{\rm and}\qquad \sum_{N=3}^\infty a_N < \infty\ ,
\end{equation}
it will follow that for all $N$,
\begin{equation}\label{prod}
\Delta_N \geq \Delta_2 \prod_{j=3}^\infty (1-a_j) > 0\ ,
\end{equation}
providing the bound we seek.

\if false
\begin{remark} At this point we have made no use of the symmetry hypothesis, and the same result is valid for the spectral gap on the whole
space $\mathcal{H}$. 
\end{remark}
\fi

\begin{remark}\label{reduced} More significantly, notice that if $f\in \mathcal{A}_N$; i.e., $f(v) = \sum_{j=}^N\varphi(v_j)$
for some function $\varphi$, then the restriction of $f$ to any slice on which $v_k$ is constant belongs to   $\mathcal{A}_{N-1}$
Therefore, if we define $\widehat{\Gamma}_N$ by
\begin{equation}\label{redinfA}
\widehat{\Gamma}_N := \inf\left\{ \mathcal{F}_N(f,f)  \ \bigg|\ \|f\|_2 =1\ ,\ \langle f,1\rangle = 0\  ,\ f \in \mathcal{A}_N\right\}\ ,
\end{equation}
we have 
\begin{equation}\label{redB}
\widehat{\Delta}_N \geq   \widehat{\Delta}_{N-1} \frac{N}{N-1} \widehat{\Gamma}_N\ ,
\end{equation}
and are thus motivated to seek bounds of the type (\ref{red2}) and (\ref{red3}) for $\widehat{\Gamma}_N$. 
\end{remark}

We now turn to the task of proving such bounds.

\if false

we define the self-adjoint operator $P^{(\gamma)}$ by
\begin{equation}\label{pgdef}
P^{(\gamma)} = \frac{1}{N}\sum_{k=1}^N \left(\frac{N - v_k^2}{N-1}\right)^\gamma P_k\ .
\end{equation}
Notice that for each $k$, both $P_k$ and the multiplication operator 
${\displaystyle \left(\frac{N - v_k^2}{N-1}\right)^\gamma}$ are commuting and self adjoint, so that
$P^{(\gamma)}$ itself is indeed self adjoint, and even non-negative. 
Also, since each $P_k$ is a projection, we have
\begin{equation}\label{pg2}
\frac{1}{N}\sum_{k=1}^N \int_{S^{N-1}(\sqrt{N})}\left(\frac{N - v_k^2}{N-1}\right)^\gamma
|P_kf|^2 \dd \sigma
 = \langle f, P^{(\gamma)} f\rangle_{L^2(S^{N-1}(\sqrt{N}))}\ .
 \end{equation}
 
Defining the function $W^{(\gamma)}$ by
 \begin{equation}\label{wgdef}
W^{(\gamma)} = \frac{1}{N}\sum_{k=1}^N \left(\frac{N - v_k^2}{N-1}\right)^\gamma \ ,
\end{equation}
we can summarize our conclusions:

As a consequence, 
\begin{equation}\label{red}
\Delta_N \geq   \Delta_{N-1} \frac{N}{N-1} \Xi_N
\end{equation}
where
\begin{equation}\label{redinf}
\Xi_N := \inf\left\{  \int_{S^{N-1}(\sqrt{N})} W^{(\gamma)} f^2\dd \sigma
  - \langle f, P^{(\gamma)} f\rangle_{L^2(S^{N-1}(\sqrt{N}))} \ \bigg|\ \|f\|_2 =1\ ,\ \langle f,1\rangle = 0\  ,\ f \ {\rm symmetric}\right\}\ .
\end{equation}
Iterating (\ref{red}), 
\begin{equation}\label{prod}
\Delta_N \geq \prod_{j=3}^N \left(\frac{j}{j-1}\Xi_j\right)\Delta_2   = \frac{N}{2}\left(\prod_{j=3}^N \Xi_j\right) \Delta_2\ .
\end{equation}
The right hand side is uniformly bounded and bounded away form zero if and only if
$$\Xi_j = 1 - \frac{1}{j}  + a_j$$
where $\sum_{j=3}^\infty|a_j| < \infty$.  The rest of the paper is devoted to establishing this as a fact. Note that it is important that the constant in the $1/j$
term is {\em exactly} $1$.  In the next section we collect relevant bounds.

\fi

\subsection{Some simple but useful bounds}

For our first approach to bounding $\mathcal{F}_N$ from below, we first rewrite this quantity as a difference of two terms. 
Toward this end, 
we define the self-adjoint operator $P^{(\gamma)}$ by
\begin{equation}\label{pgdef}
P^{(\gamma)} = \frac{1}{N}\sum_{k=1}^N \left(\frac{N - v_k^2}{N-1}\right)^\gamma P_k\ .
\end{equation}
For each $k$, both $P_k$ and the multiplication operator 
${\displaystyle \left(\frac{N - v_k^2}{N-1}\right)^\gamma}$ are commuting and self adjoint, so that
$P^{(\gamma)}$ itself is self-adjoint, and even non-negative. 
Since each $P_k$ is a projection, we have
\begin{equation}\label{pg2}
\frac{1}{N}\sum_{k=1}^N \int_{S^{N-1}(\sqrt{N})}\left(\frac{N - v_k^2}{N-1}\right)^\gamma
|P_kf|^2 \dd \sigma
 = \langle f, P^{(\gamma)} f\rangle_{L^2(S^{N-1}(\sqrt{N}))}\ .
 \end{equation}
 Define the function $W^{(\gamma)}$ by
 \begin{equation}\label{wgdef}
W^{(\gamma)} = \frac{1}{N}\sum_{k=1}^N  w^{(\gamma)}(v_k) = \frac{1}{N}\sum_{k=1}^N \left(\frac{N - v_k^2}{N-1}\right)^\gamma \ ,
\end{equation}
we may rewrite $\mathcal{F}_N$ as
\begin{equation}\label{difform}
\mathcal{F}_N(f,f)   = \int_{S^{N-1}(\sqrt{N})} W^{(\gamma)} f^2\dd \sigma
  - \langle f, P^{(\gamma)} f\rangle\ .
  \end{equation}
From  an upper bound of $P^{(\gamma)}$, and  a lower bound on $W^{(\gamma)}$, we can deduce  a lower bound on $\mathcal{F}_N$. 
It is easy to deduce a bound on $ \langle f, P^{(\gamma)}f\rangle$ for $f$ orthogonal to the constants
from the calculation of the spectral gap  of the operator $P^{(0)}$ that was made in \cite{CCL00,CCL03}.

\begin{lm}\label{proj} For all $N$, all $0 \leq \gamma \leq 1$, and all $f \in L^2(S^{N-1}(\sqrt{N}))$ orthogonal to the constants, 
\begin{equation}\label{proj2}
\langle f, P^{\gamma}f\rangle  \leq \left(\frac{N}{N-1}\right)^\gamma \mu_N \|f\|_2^2 \ 
\end{equation}
where
\begin{equation}\label{pgap}
\mu_N = \frac{1}{N} + \frac{3}{N(N+1)}\ .
\end{equation}
\end{lm}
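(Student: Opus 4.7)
The plan is to reduce to the unweighted case $\gamma=0$ and invoke the explicit spectral gap calculation for $P^{(0)}=\frac{1}{N}\sum_{k=1}^N P_k$ that is given in \cite{CCL00,CCL03}, where it is shown that the largest eigenvalue of $P^{(0)}$ on the orthogonal complement of the constants is exactly $\mu_N = \frac{1}{N} + \frac{3}{N(N+1)}$.

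First, I would note the trivial pointwise bound $w^{(\gamma)}(v_k) = \left(\frac{N-v_k^2}{N-1}\right)^\gamma \leq \left(\frac{N}{N-1}\right)^\gamma$, valid on $S^{N-1}(\sqrt{N})$ since $v_k^2\geq 0$. Next, since $w^{(\gamma)}(v_k)$ depends only on $v_k$, it commutes with the projection $P_k$ onto functions of $v_k$ alone; consequently $w^{(\gamma)}(v_k)P_k$ is self-adjoint and for any $f$,
\begin{equation*}
\langle f, w^{(\gamma)}(v_k) P_k f\rangle = \langle P_k f, w^{(\gamma)}(v_k) P_k f\rangle \leq \left(\frac{N}{N-1}\right)^\gamma \langle P_k f, P_k f\rangle = \left(\frac{N}{N-1}\right)^\gamma \langle f, P_k f\rangle.
\end{equation*}
Averaging over $k$ gives the operator-form inequality $P^{(\gamma)} \leq \left(\frac{N}{N-1}\right)^\gamma P^{(0)}$.

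Finally, since $P_k 1 = 1$ for every $k$ we have $P^{(0)} 1 = 1$, so the orthogonal complement of the constants is invariant under $P^{(0)}$, and the spectral gap calculation from \cite{CCL00,CCL03} yields $\langle f, P^{(0)} f\rangle \leq \mu_N \|f\|_2^2$ whenever $f$ is orthogonal to the constants. Combining the two bounds yields \eqref{proj2}. There is no real obstacle here: the only point requiring care is the commutativity of $w^{(\gamma)}(v_k)$ with $P_k$, which is what permits the pointwise weight bound to be transferred into a quadratic-form bound.
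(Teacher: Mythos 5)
Your proof is correct and follows essentially the same route as the paper's: apply the pointwise bound $w^{(\gamma)}(v_k)\leq (N/(N-1))^\gamma$ after moving the weight inside the quadratic form via $P_k^2=P_k$ and the commutativity of $w^{(\gamma)}(v_k)$ with $P_k$, then invoke the known eigenvalue $\mu_N$ of $P^{(0)}$ from \cite{CCL00,CCL03}. The only difference is that you spell out the commutativity step explicitly, which the paper leaves implicit.
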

\medskip
\begin{proof} Using the pointwise upper bound
${\displaystyle \left(\frac{N - v_k^2}{N-1}\right) \leq \frac{N}{N-1}}$, we have
\begin{multline}
\langle f,P^{(\gamma)} f\rangle = \frac{1}{N}\sum_{k=1}^N  \int_{S^{N-1}(\sqrt{N})}\left(\frac{N - v_k^2}{N-1}\right)^\gamma |P_kf|^2\dd \sigma
\leq\\
\frac{1}{N}\sum_{k=1}^N  \int_{S^{N-1}(\sqrt{N})}\left(\frac{N}{N-1}\right)^\gamma |P_kf|^2\dd \sigma
= \left(\frac{N}{N-1}\right)^\gamma \langle f,P^{(0)}f\rangle\ .
\end{multline}
$P^{(0)}$ coincides with the operator $P$ analyzed in \cite{CCL00,CCL03} where it was shown that $\mu_N$ is its second largest eigenvalue after the eigenvalue $1$
corresponding to the constant function.
\end{proof}

\begin{lm}\label{weight} For all $N$, all $0 < \gamma \leq 1$, and for all $ v \in S^{N-1}(\sqrt{N})$,
\begin{equation}\label{wlb1}
\left(\frac{N-1}{N}\right)^{1-\gamma} \ \leq \ W^{(\gamma)}( v)\  \leq\  1\ .
\end{equation}
Furthermore,  for all $ v \in S^{N-1}(\sqrt{N})$,
\begin{equation}\label{wlb2}
W^{(0)}( v)\  = \  W^{(1)}( v)  \  = \  1
\end{equation}
\end{lm}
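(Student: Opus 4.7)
The plan is to reduce everything to elementary inequalities on the normalized quantities $a_k := (N - v_k^2)/(N-1)$, using only the constraints $\sum_k v_k^2 = N$ and $v_k^2 \geq 0$.

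First I would dispose of the endpoint identities \eqref{wlb2}. The case $\gamma = 0$ is immediate since each summand is $1$. For $\gamma = 1$,
\[
W^{(1)}(v) = \frac{1}{N(N-1)}\sum_{k=1}^N (N - v_k^2) = \frac{N^2 - N}{N(N-1)} = 1,
\]
using only $\sum_k v_k^2 = N$.

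For the upper bound in \eqref{wlb1}, I would invoke Jensen's inequality, since $x \mapsto x^\gamma$ is concave on $[0,\infty)$ for $\gamma \in [0,1]$:
\[
W^{(\gamma)}(v) = \frac{1}{N}\sum_{k=1}^N a_k^\gamma \leq \left(\frac{1}{N}\sum_{k=1}^N a_k\right)^\gamma = 1^\gamma = 1,
\]
where the last equality is exactly the computation already done for $\gamma=1$.

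The lower bound is the only place requiring a small trick. I would set $b_k := (N - v_k^2)/N$. The constraint $v_k^2 \geq 0$ (combined with $v_k^2 \leq \sum_j v_j^2 = N$) forces $b_k \in [0,1]$, and $\sum_k b_k = N-1$. Writing $a_k = \frac{N}{N-1} b_k$ gives
\[
W^{(\gamma)}(v) = \left(\frac{N}{N-1}\right)^{\gamma} \frac{1}{N}\sum_{k=1}^N b_k^\gamma.
\]
Since $b_k \in [0,1]$ and $\gamma \leq 1$, we have $b_k^\gamma \geq b_k$, so $\sum_k b_k^\gamma \geq \sum_k b_k = N-1$, yielding
\[
W^{(\gamma)}(v) \geq \left(\frac{N}{N-1}\right)^{\gamma} \cdot \frac{N-1}{N} = \left(\frac{N-1}{N}\right)^{1-\gamma},
\]
which is \eqref{wlb1}. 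There is no substantive obstacle here; the only point worth flagging is that the lower bound is actually sharp, attained at vectors of the form $v = (\sqrt{N}, 0, \dots, 0)$ (and permutations), where one $b_k$ equals $0$ and the rest equal $1$, so no improvement is available through this purely pointwise argument.
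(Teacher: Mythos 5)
Your argument is correct, and for the lower bound it takes a genuinely different route from the paper. The upper bound (Jensen, from concavity of $x\mapsto x^\gamma$) and the endpoint identities are exactly as in the paper. For the lower bound, the paper observes that $W^{(\gamma)}$ is a concave function of the vector $(v_1^2,\dots,v_N^2)$ on the simplex $\{x_j\ge 0,\ \sum x_j = N\}$, so its minimum occurs at an extreme point $(N,0,\dots,0)$, where it evaluates to $((N-1)/N)^{1-\gamma}$. You instead renormalize to $b_k = (N-v_k^2)/N \in [0,1]$, note $\sum_k b_k = N-1$, and use the pointwise inequality $b_k^\gamma \ge b_k$ on $[0,1]$; this gives the same bound with no appeal to extreme-point structure. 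Your version is a shade more elementary and self-contained, whereas the paper's argument makes the sharpness (minimizer at a vertex) structurally transparent; you recover the same sharpness observation explicitly at the end, so nothing is lost. Both are complete and correct.
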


\begin{proof} Since
${\displaystyle \frac{1}{N}\sum_{k=1}^Nv_k^2 =1}$,
${\displaystyle  \frac{1}{N}\sum_{k=1}^N    \left(\frac{N - v_k^2}{N-1}\right) = 1}$,
Jensen's inequality yields
$$\frac{1}{N}\sum_{k=1}^N    \left(\frac{N - v_k^2}{N-1}\right)^\gamma \leq 
\left(\frac{1}{N}\sum_{k=1}^N\frac{N - v_k^2}{N-1}\right)^\gamma = 1\ .$$

To prove the lower bound, note that the minimum of ${\displaystyle  v \mapsto 
\frac{1}{N}\sum_{k=1}^N    \left(\frac{N - v_k^2}{N-1}\right)^\gamma}$ is the same as the minimum of the function
\begin{equation} \label{wlb4}
{\displaystyle 
(x_1,\dots,x_N) \mapsto \frac{1}{N}\sum_{k=1}^N    \left(\frac{N - x_k}{N-1}\right)^\gamma
}
\end{equation}
on the set of $(x_1,\dots,x_N)$ satisfying
\begin{equation}\label{wlb5}
x_j \ge 0 \quad {\rm for\ all}\quad j =1,\dots,N \qquad{\rm and}\qquad \sum_{j=1}^Nx_j = N\ .
\end{equation}
Since the function in (\ref{wlb4}) is concave, the minimum occurs at an extreme point of the domain, and by symmetry, all extreme points yield the same value. Thus the minimum occurs at
$$(x_1,\dots,x_N)  = (N,0,\dots,0),$$
and the minimum value is $((N-1)/N)^{1-\gamma}$. 
The assertions about $\gamma=0$ and $\gamma =1$ are obvious. 
\end{proof}

\begin{remark} A simple calculus and convexity argument shows that
\begin{equation}\label{squeeze}
1 - (1-\gamma)\frac{1}{N-1} \leq   \left(\frac{N-1}{N}\right)^{1-\gamma}  \leq  1 - (1-\gamma)\frac{1}{N} 
\end{equation}
for all $N$ and all $0 \leq\gamma \leq 2$.
Note also  that 
${\displaystyle \lim_{\gamma\to0} \left(\frac{N-1}{N}\right)^{1-\gamma} = \frac{N-1}{N} < 1}$
so that although the lower bound in the lemma is sharp, the lack of uniform convergence means that the limiting bound is not sharp. 
\end{remark}

\subsection{Lower bound on $\Delta_N$ using the uniform bound on $W^{(\gamma)}$\ .}

We now use the lower bound \ref{wlb1} together with Lemma~\ref{proj}    to obtain a lower bound on $\Delta_N$. 
Because $ \ref{wlb1}$ is only sharp for $\gamma>0$, and since $\gamma =0$ has been treated in \cite{CCL00,CCL03}, 
we only consider $\gamma>0$ in the next theorem. 

\begin{thm}\label{uniform} For all $0 < \gamma \leq 1$, and all $N\geq 2$, 
\begin{equation}\label{start}
  \Delta_N \geq 4 N^{\gamma -1} \left(\prod_{j=3}^N  \left[1 -   \frac{4j+1}{(j-1)^2(j+1)}\right]\right)\ .
  \end{equation}
\end{thm}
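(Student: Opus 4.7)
The plan is to combine the induction inequality from Theorem~\ref{thm1} with the decomposition \eqref{difform} of $\mathcal{F}_N$ as the difference of a weighted $L^2$ norm and a projection form, then plug in the uniform lower bound on $W^{(\gamma)}$ from Lemma~\ref{weight} and the upper bound on $\langle f, P^{(\gamma)} f\rangle$ from Lemma~\ref{proj}. The iteration will terminate at $N=2$ via Lemma~\ref{Delta2}, which gives $\Delta_2 = 2^{\gamma+1}$.

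First I would bound $\Gamma_N$. For any admissible $f$ with $\|f\|_2 = 1$, by \eqref{difform} together with Lemmas~\ref{weight} and \ref{proj},
\[
\mathcal{F}_N(f,f) \;\geq\; \left(\frac{N-1}{N}\right)^{1-\gamma} - \left(\frac{N}{N-1}\right)^\gamma \frac{N+4}{N(N+1)},
\]
using $\mu_N = (N+4)/(N(N+1))$. Factoring out the weight factor yields
\[
\Gamma_N \;\geq\; \left(\frac{N-1}{N}\right)^{1-\gamma}\left[1 - \frac{N+4}{(N-1)(N+1)}\right].
\]

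The key algebraic step is the exact identity
\[
\frac{N}{N-1}\left[1 - \frac{N+4}{(N-1)(N+1)}\right] \;=\; 1 - \frac{4N+1}{(N-1)^2(N+1)},
\]
which follows from $N(N+4) - (4N+1) = N^2 - 1 = (N-1)(N+1)$. Using this to absorb one factor of $(N-1)/N$ into the bracket rewrites the estimate as
\[
\Gamma_N \;\geq\; \left(\frac{N-1}{N}\right)^{2-\gamma}\left[1 - \frac{4N+1}{(N-1)^2(N+1)}\right],
\]
and so, by Theorem~\ref{thm1},
\[
\frac{N}{N-1}\Gamma_N \;\geq\; \left(\frac{N-1}{N}\right)^{1-\gamma}\left[1 - \frac{4N+1}{(N-1)^2(N+1)}\right].
\]

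Finally I would iterate Theorem~\ref{thm1} down to $N=2$:
\[
\Delta_N \;\geq\; \Delta_2 \prod_{j=3}^N \frac{j}{j-1}\,\Gamma_j \;\geq\; 2^{\gamma+1}\prod_{j=3}^N \left(\frac{j-1}{j}\right)^{1-\gamma}\left[1 - \frac{4j+1}{(j-1)^2(j+1)}\right].
\]
The telescoping product $\prod_{j=3}^N (j-1)/j = 2/N$ gives $\prod_{j=3}^N ((j-1)/j)^{1-\gamma} = 2^{1-\gamma} N^{\gamma-1}$, which combines with the $2^{\gamma+1}$ prefactor to produce exactly the claimed constant $4N^{\gamma-1}$. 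No part of the argument is analytically subtle; the only real obstacle is matching exponents of $(N-1)$ and $N$ so that the identity $\tfrac{N}{N-1}[1-\tfrac{N+4}{(N-1)(N+1)}] = 1-\tfrac{4N+1}{(N-1)^2(N+1)}$ converts the natural bound on $\Gamma_N$ into exactly the form the theorem claims, with the $(N/2)^\gamma$ factors telescoping cleanly against $\Delta_2$.
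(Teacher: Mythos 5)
Your proof is correct and follows the same route as the paper: combine the induction bound of Theorem~\ref{thm1} with the decomposition \eqref{difform}, plug in Lemmas~\ref{weight} and~\ref{proj}, and telescope down to $\Delta_2 = 2^{\gamma+1}$. (Incidentally, your explicit factoring confirms that the paper's intermediate line should read $\Gamma_N \geq \left(\tfrac{N}{N-1}\right)^{\gamma-2}\bigl[1 - \tfrac{4N+1}{(N-1)^2(N+1)}\bigr]$ rather than exponent $\gamma-1$, though the paper's final display is correct.)
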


\begin{proof}
Fix $f$ orthogonal to the constants. By Lemma~\ref{proj} and ~\ref{weight} and (\ref{difform}), 
\begin{eqnarray}
\mathcal{F}_N(f,f) &\geq& \left(\frac{N-1}{N}\right)^{1-\gamma}\| f\|_2^2 - \left(\frac{N}{N-1}\right)^\gamma \mu_N \|f\|_2^2  \nonumber\\
  &=&  \left(\frac{N}{N-1}\right)^{\gamma -2}\left[\frac{N}{N-1}  - \left(\frac{N}{N-1}\right)^2 \mu_N \right] \|f\|_2^2\nonumber\\
  &=& \left(\frac{N}{N-1}\right)^{\gamma -2}\left[ 1 -   \frac{4N+1}{(N-1)^2(N+1)}\right]\|f\|_2^2\nonumber\ .
  \end{eqnarray}
  Therefore,
  $$\Gamma_N \geq  \left(\frac{N}{N-1}\right)^{\gamma -1}\left[ 1 -   \frac{4N+1}{(N-1)^2(N+1)}\right]\ .$$
  Thus by (\ref{prod}),
  \begin{equation*}
  \Delta_N \geq \left(\frac{N}{2}\right)^{\gamma -1} \left(\prod_{j=3}^N  \left[1 -   \frac{4j+1}{(j-1)^2(j+1)}\right]\right)\Delta_2\ .
  \end{equation*}
 \end{proof} 
  
By Theorem~\ref{infpro},
 $$\lim_{N\to\infty} \prod_{j=3}^N  \left[1 -   \frac{4j+1}{(j-1)^2(j+1)}\right] = \frac{3}{\Gamma((5+\sqrt{21})/2) \Gamma((5-\sqrt{21})/2) } \approx 
 0.03881503614\ .$$
 
 For $\gamma=1$, we then have the following result:

\begin{cl}\label{lbone}
For $\gamma =1$ and all $N\ge 2$, we have
\begin{equation}\label{lbone1}
\Delta_N \geq   \frac{12 N^{\gamma-1} }{\Gamma((5+\sqrt{21})/2) \Gamma((5-\sqrt{21})/2) } \approx 0.1552601446 N^{\gamma-1}\ .
\end{equation}
\end{cl}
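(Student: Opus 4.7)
The corollary is a near-immediate consequence of the two ingredients already stated just above it: Theorem~\ref{uniform} (which was proved with $\mathcal{F}_N$ bounded via Lemma~\ref{proj} and Lemma~\ref{weight}) and Theorem~\ref{infpro} (the infinite product evaluation). The plan is simply to specialize Theorem~\ref{uniform} to $\gamma=1$ and then replace the partial product on the right-hand side by its limiting value. The only point that requires any thought is that Theorem~\ref{infpro} gives the value of the infinite product, whereas Theorem~\ref{uniform} produces a partial product up to $N$, so I need to make sure the partial product is \emph{bounded below} by the limit for every $N\ge 2$.

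First I would check that each factor in the product is strictly positive and strictly less than $1$. For $j\ge 3$ we have $(j-1)^2(j+1)\ge 4\cdot 4 = 16$ while $4j+1\le (j-1)^2(j+1)$ (trivially for $j=3$, where $13\le 16$, and increasingly so for larger $j$ since the denominator grows cubically). Hence
\[
0 < 1 - \frac{4j+1}{(j-1)^2(j+1)} < 1 \qquad \text{for all } j\ge 3,
\]
so the partial products $\prod_{j=3}^N \bigl[1 - (4j+1)/((j-1)^2(j+1))\bigr]$ form a strictly decreasing positive sequence in $N$. In particular, every partial product is $\ge$ the infinite product.

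Combining the monotonicity with Theorem~\ref{infpro} yields, for every $N\ge 3$,
\[
\prod_{j=3}^N \left[1 - \frac{4j+1}{(j-1)^2(j+1)}\right] \ \ge\ \frac{3}{\Gamma((5+\sqrt{21})/2)\,\Gamma((5-\sqrt{21})/2)} .
\]
Substituting this lower bound into the inequality of Theorem~\ref{uniform} and setting $\gamma=1$ (so that the prefactor is $4 N^{\gamma-1}\cdot \tfrac{3}{\Gamma((5+\sqrt{21})/2)\,\Gamma((5-\sqrt{21})/2)} = \tfrac{12}{\Gamma((5+\sqrt{21})/2)\,\Gamma((5-\sqrt{21})/2)}$) gives the stated bound for $N\ge 3$. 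The edge case $N=2$ is handled separately by Lemma~\ref{Delta2}, which gives $\Delta_2 = 2^{\gamma+1}=4$ when $\gamma=1$; one checks numerically that $4 \ge 0.155\ldots$, so the inequality holds there too.

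There is no real obstacle here: the entire content of the corollary is already encoded in the preceding two theorems, and the only step beyond quoting them is the one-line monotonicity observation that lets me pass from the asymptotic value of the product to a uniform lower bound valid at every $N$.
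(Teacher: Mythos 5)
Your argument is correct and is exactly the route the paper intends: specialize Theorem~\ref{uniform} to $\gamma=1$ and use Theorem~\ref{infpro} to evaluate the limiting product, with the monotonicity of the partial products (each factor lying in $(0,1)$) justifying that every partial product dominates its limit. The paper leaves the monotonicity step implicit and never writes a separate proof for this corollary; you have filled in that detail and the $N=2$ edge case cleanly, matching the expected argument.
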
 

Using the lower bound from Lemma~\ref{weight} we cannot obtain a lower bound  on $\Delta_N$ which is uniform in $N$ except
when $\gamma =1$. In the next section we shall obtain a bound on $\Gamma_N$, and hence $\Delta_N$,  that is much sharper {\em for large $N$} that leads
to uniform bounds on $\Delta_N$.

\begin{remark} Nowhere in the proof of Theorem~\ref{uniform} have we made any use of the hypothesis that $f$ be symmetric. In fact, the bound
proved in Theorem~\ref{uniform} is a bound on the spectral gap of $L_{N,1}$ on the whole space, not only the symmetric subspace.  In the next section
we shall make use of the symmetry hypothesis. In the final section, we explain how it may be avoided, but at the cost of a numerically worse bound. 
\end{remark}

\section{Lower bound on $\widehat{\Delta}_N$ and $\Delta_N$ for $0 < \gamma < 1$.}

We show in this section that Theorem~\ref{thm1} can be used as the basis of an inductive approach to bounding $\widehat{\Delta}_N$ from below uniformly in $N$.
The principle behind this is that for admissible  trial  functions in $\mathcal{A}_N$, i.e., admissible 
trial functions of the form $f(v) = \sum_{j=1}^N\varphi(v_j)$, the probability
density $f^2$ cannot be too concentrated in places where the weight $W^{(\gamma)}$
is significantly less than 1, and thus one can improve upon the uniform lower bound on $W^{(\gamma)}$. However,  our actual proof  makes somewhat 
indirect use of this.  

They key to taking advantage of the special form of trial functions $f\in \mathcal{A}_N$  is provided by certain correlation inequalities.

 \subsection{The correlation operators}

 Many of the estimates we shall make to bound $\widehat{\Delta}_N$  would be trivial if it were the case that the functions $\varphi(v_j)$, $j=1,\dots,N$ were independent random variables for the uniform distribution on the
 energy sphere. 
 
 For large $N$, the different velocities $v_j$, are, in fact, nearly independent. This is due to the fact that the unit Gaussian 
 distribution
 $${\rm d}\gamma_N := (2\pi)^{-N/2} e^{-|v|^2/2} {\rm d}^Nv$$
 is very tightly concentrated on a close neighborhood of $S^{N-1}(\sqrt{N})$, and in this sense is very close to the uniform measure
 on $S^{N-1}(\sqrt{N})$, ${\rm d}\sigma$. This is an instance of the {\em equivalence of ensembles} in statistical mechanics --- in this case
 ${\rm d}\sigma$ is the {\em microcanonical ensemble} for a gas of $N$ free particles with unit mean energy   per particle, and 
 ${\rm d}\gamma_N$ is the {\em canonical ensemble} for the same system.  Under the canonical ensemble, the various different velocities {\em are} statistically independent.  
 
 We shall require a quantitative measure of the amount of dependence that there is for each finite $N$. We do this through the {\em correlation operators}, the first and simplest of which we now define.
 
 \begin{defi} [The pair correlation operator $K$]  For any unit vector $u$ on $\R^N$,  the map $v\mapsto v\cdot u$
 maps $S^{N-1}(\sqrt{N})$ onto $[-\sqrt{N},\sqrt{N}]$, and pushes the uniform probability ${\rm d}\sigma$ forward onto a probability measure 
 ${\rm d}\nu_N$ on $[-\sqrt{N},\sqrt{N}]$  that is independent of $u$. 
  We define an operator 
$K$ on $L^2([-1,1],{\rm d}\nu_N)$ by
\begin{eqnarray} \label{Koperator}
\int_{S^{N-1}(\sqrt{N})} \varphi(v_1) \psi(v_2) {\rm d} \sigma &=:&  \int_{S^{N-1}(\sqrt{N})} \varphi(v_1) (K\psi) (v_1) {\rm d} \sigma\nonumber\\
&=& \int_{-\sqrt{N}}^{\sqrt{N}} \varphi(w) K\varphi(w){\rm d}\nu_N(w)  \ .
\end{eqnarray}
\end{defi}
In \cite{CCL03} the eigenvalues $\alpha_k$   of this operator have been computed.
The eigenfunctions are polynomials of degree $k$. All  of the odd polynomials are in the kernel of $K$. Moreover
\begin{equation}\label{mono}
|\alpha_k| > |\alpha_{k+2}|
\end{equation}
and 
\begin{equation}\label{eigval}
\alpha_0 = 1 \ , \ \alpha_2 = - \frac{1}{N-1} \ , \ \alpha_4 = \frac{3}{(N-1)(N+1)} \ , \ \alpha_6 = - \frac{15}{(N-1)(N+1)(N+3)} \ .
\end{equation}
For each even $k$, the eigenspace for the eigenvalue $\alpha_k$ is one-dimensional, and is spanned by an even  polynomial of degree $k$. 

Now let $\varphi$ and $\psi$ be functions on $[-\sqrt{N},\sqrt{N}]$ that are orthogonal to the constants; i.e., the eigenspace of $\alpha_0$. 
Then, by the definition above, and what we have said about the eigenvalues of $K$, since $|\alpha_2| = 1/(N-1)$, 
\begin{eqnarray}
\left| \int \varphi(v_1) \psi(v_2){\rm d}\sigma  -  \left(\int \varphi(v_1) {\rm d}\sigma \right)\left(\int \psi(v_2){\rm d}\sigma \right)\right|
&=& \left|\int_{-\sqrt{N}}^{\sqrt{N}} \varphi(w) K\varphi(w){\rm d}\nu_N(w) \right|\nonumber\\
&\leq& \frac{1}{N-1} \|\varphi\|_2 \|\psi\|_2\ .\nonumber
\end{eqnarray}
where $\|\varphi\|_2^2$ denotes ${\displaystyle \int_{S^{N-1}(\sqrt{N})}|\varphi(v_1)|^2{\rm d}\sigma = 
\int_{[-\sqrt{N},\sqrt{N}]} |\varphi(w)|^2{\rm d}\nu_N(w)}$.   Thus for large $N$, the random variables $\varphi(v_1)$ and $\psi(v_2)$ are almost 
uncorrelated.   

If we know that $\varphi(v_1)$ and $\psi(v_2) $ are not only orthogonal to the constants but are also
orthogonal to $v_1^2$ and $v_2^2$ respectively, then $\varphi$ and $\psi$  are orthogonal to the $\alpha_2$ eigenspace as well as the $\alpha_0$ eigenspace, and  we obtain the stronger bound
$$
\left| \int \varphi(v_1) \psi(v_2){\rm d}\sigma  -  \left(\int \varphi(v_1) {\rm d}\sigma \right)\left(\int \psi(v_2){\rm d}\sigma \right)\right|
\leq  
 \frac{3}{N^2-1} \|\varphi\|_2 \|\psi\|_2\ .
 $$
In this case, we get a much stronger bound on correlations. The following lemma will make this stronger bound available
to us and we shall use it a number of times in what follows.

 \begin{lm}\label{gprop1}
 Let $g\in L^2(S^{N-1}(\sqrt{N}))$ be orthogonal to the constants, and in $\mathcal A_N$, i.e., of the form
 $\sum_{k=1}^N \varphi(v_k)$. The choice of $\varphi$ is not unique, but among the possible choices, there is always one with the property that
  for each $k$, 
 $\varphi(v_k)$ is orthogonal to both $1$ and  $v_k^2$ in  $L^2(S^{N-1}(\sqrt{N}))$.
 \end{lm}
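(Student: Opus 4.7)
The representation $g=\sum_{k=1}^N\varphi(v_k)$ is never unique, because $\sum_k v_k^2 = N$ on the sphere. The idea is to exploit exactly this one-parameter family of representations to normalize $\varphi$. Concretely, replacing $\varphi(v)$ by $\tilde\varphi(v) := \varphi(v) + b(v^2-1)$ for any $b\in\R$ does not change $g$, since $\sum_k(v_k^2-1) = N-N = 0$ on $S^{N-1}(\sqrt N)$. I will use this single real degree of freedom to impose orthogonality of $\tilde\varphi(v_k)$ to $v_k^2$, and show that orthogonality to $1$ comes for free.

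First I would verify the orthogonality to the constants. By the definition of $\nu_N$ as the marginal of $\sigma$ in one coordinate and the symmetry of $\sigma$,
\begin{equation*}
0 = \langle g,1\rangle_{L^2(d\sigma)} = \sum_{k=1}^N\int \varphi(v_k)\, d\sigma = N\int_{-\sqrt N}^{\sqrt N}\varphi(w)\, d\nu_N(w),
\end{equation*}
so $\int\varphi\, d\nu_N=0$ automatically, and this property is preserved under $\varphi\mapsto\varphi+b(v^2-1)$ since $\int(v^2-1)\,d\nu_N = 1-1 = 0$ (using $\int v^2\,d\nu_N = \int v_1^2\,d\sigma = 1$).

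Next, to achieve $\int\tilde\varphi(v)\,v^2\,d\nu_N(v)=0$, I compute the response: with $\tilde\varphi = \varphi+b(v^2-1)$,
\begin{equation*}
\int \tilde\varphi(v)\,v^2\,d\nu_N = \int \varphi(v)\,v^2\,d\nu_N + b\Bigl(\int v^4\,d\nu_N - \int v^2\,d\nu_N\Bigr).
\end{equation*}
The standard moment formula on $S^{N-1}(\sqrt N)$ yields $\int v^4\,d\nu_N = 3N/(N+2)$, hence
\begin{equation*}
\int v^4\,d\nu_N - \int v^2\,d\nu_N = \frac{3N}{N+2}-1 = \frac{2(N-1)}{N+2} \neq 0
\end{equation*}
for $N\ge 2$. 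Therefore the unique choice
\begin{equation*}
b = -\frac{N+2}{2(N-1)}\int \varphi(v)\,v^2\,d\nu_N(v)
\end{equation*}
makes $\tilde\varphi$ orthogonal to $v^2$ in $L^2(d\nu_N)$, and hence $\tilde\varphi(v_k)$ orthogonal to $v_k^2$ in $L^2(d\sigma)$ for every $k$, while preserving orthogonality to $1$. No step here is delicate: the only thing to check is that the denominator $\int(v^2-1)v^2\,d\nu_N$ is nonzero, which the explicit moment formula provides.
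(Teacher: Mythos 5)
Your proof is correct and follows the same route as the paper's: exploit the constraint $\sum_k(v_k^2-1)=0$ on $S^{N-1}(\sqrt N)$ to shift $\varphi$ by a multiple of $v^2-1$, using the fact that orthogonality to the constants is automatic from $\langle g,1\rangle=0$ and symmetry. The paper writes the shift as a projection onto $\eta(v_k)=v_k^2-1$ without computing the moment, while you make the coefficient fully explicit via $\int v^4\,d\nu_N=3N/(N+2)$; this is the only (inessential) difference.
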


\begin{proof}
Since ${\displaystyle \int_{S^{N-1}(\sqrt{N})}\varphi(v_k)\dd \sigma}$ is independent of $k$,
we have that for each $k$,
$$0 = \int_{S^{N-1}(\sqrt{N})}g( v)\dd \sigma = N\int_{S^{N-1}(\sqrt{N})}\varphi(v_k)\dd \sigma\ ,$$
and so ${\displaystyle \int_{S^{N-1}(\sqrt{N})}\varphi(v_k)\dd \sigma = 0}$.  Next, let $\eta(v_k) := v_k^2 -1$ and define
$$\widetilde \varphi(v_k) = \varphi(v_k) - \left[\left(\int_{S^{N-1}(\sqrt{N})}\eta^2(v_k)\dd \sigma\right)^{-2}
\int_{S^{N-1}(\sqrt{N})}\varphi(v_k)\eta(v_k)\dd \sigma\right]\eta(v_k)\ .$$
By symmetry, the coefficient of $\eta(v_k)$ does not depend on $k$, and then since $\sum_{k=1}^N \eta(v_k) = 0$,
it follows that 
$$\sum_{k=1}^N\widetilde \varphi(v_k) = \sum_{k=1}^N\varphi(v_k) = g(v)\ .$$
By construction,  $\widetilde \varphi(v_k)$ is orthogonal to both $1$ and  $v_k^2$ in  $L^2(S^{N-1}(\sqrt{N}))$.
\end{proof}

Thus, we may assume henceforth that
\begin{equation} \label{orth1}
\int_{S^{N-1}(\sqrt{N})} \varphi(v_1) {\rm d} \sigma = 0 \ ,
\qquad
{\rm and}\qquad
\int_{S^{N-1}(\sqrt{N})} \varphi(v_1) v_1^2 {\rm d} \sigma = 0 \ .
\end{equation}

The orthogonality provided by Lemma~\ref{gprop1} has consequences that are summarized in the next lemma, which shall be used several
times in what follows.

 \begin{lm}\label{gprop2}
 Let $g\in L^2(S^{N-1}(\sqrt{N}))$ be in $\mathcal A_N$ where  
  for each $k$, 
 $\varphi(v_k)$ is orthogonal to both $1$ and  $v_k^2$ in  $L^2(S^{N-1}(\sqrt{N}))$.  Then,
 for each $k$, 
 \begin{equation}\label{gp3}
N \left(1 - \frac{15}{ {(N+1)}(N+3)}\right)\|\varphi(v_k)\|_2^2  \leq \|g\|_2^2 \leq 
N \left(1 + \frac{3}{N+1}\right)\|\varphi(v_k)\|_2^2\ .
 \end{equation}
  \end{lm}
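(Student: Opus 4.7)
The plan is to compute $\|g\|_2^2$ by expanding the sum $g = \sum_{k=1}^N \varphi(v_k)$ into a diagonal contribution of size $N\|\varphi(v_1)\|_2^2$, plus $N(N-1)$ identical cross terms of the form $\int \varphi(v_1)\varphi(v_2)\,{\rm d}\sigma$, and then to control the cross terms through the pair correlation operator $K$ defined in \eqref{Koperator}. By permutation symmetry of ${\rm d}\sigma$ and the definition of $K$,
\begin{equation*}
\|g\|_2^2 \;=\; N\|\varphi(v_1)\|_2^2 \;+\; N(N-1)\,\langle \varphi, K\varphi\rangle_{L^2({\rm d}\nu_N)}\ ,
\end{equation*}
so the lemma reduces to suitable two-sided bounds on $\langle\varphi, K\varphi\rangle$.

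Next I would use the known spectral data \eqref{mono}--\eqref{eigval} of $K$ together with the hypotheses on $\varphi$. Since odd polynomials lie in the kernel of $K$, only the even part of $\varphi$ contributes to $\langle \varphi, K\varphi\rangle$, and on the even subspace the eigenvalues $\alpha_0, \alpha_2, \alpha_4, \alpha_6, \dots$ alternate in sign and strictly decrease in absolute value. The orthogonality assumptions $\varphi \perp 1$ and $\varphi\perp v_1^2$ in $L^2(S^{N-1}(\sqrt{N}))$ imply that $\varphi$ is orthogonal (in $L^2({\rm d}\nu_N)$) to the one-dimensional eigenspaces for $\alpha_0$ and $\alpha_2$, since each of those eigenspaces is spanned by an even polynomial of degree $\leq 2$. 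Consequently the spectrum of $K$ restricted to the closed subspace containing $\varphi$ lies in the interval $[\alpha_6,\alpha_4]$, so
\begin{equation*}
-\frac{15}{(N-1)(N+1)(N+3)}\,\|\varphi\|_2^2 \;\leq\; \langle \varphi, K\varphi\rangle \;\leq\; \frac{3}{(N-1)(N+1)}\,\|\varphi\|_2^2\ .
\end{equation*}

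Substituting these bounds into the expansion of $\|g\|_2^2$, the factors of $N-1$ cancel neatly: the upper bound yields $\|g\|_2^2 \leq N\bigl(1+\tfrac{3}{N+1}\bigr)\|\varphi\|_2^2$, and the lower bound yields $\|g\|_2^2 \geq N\bigl(1-\tfrac{15}{(N+1)(N+3)}\bigr)\|\varphi\|_2^2$, which is exactly \eqref{gp3}. The only point requiring care is verifying that the orthogonality of $\varphi$ to $1$ and $v_1^2$ really does kill the first two eigenspaces of $K$, but this follows immediately because those eigenspaces are one-dimensional and spanned by an even polynomial of degree $0$ and $2$ respectively. No step appears to present a genuine obstacle: the argument is essentially a bookkeeping application of the spectral formulas already recorded in \eqref{eigval}.
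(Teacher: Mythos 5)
Your proposal is correct and follows the same route as the paper: expand $\|g\|_2^2$ into the diagonal term $N\|\varphi\|_2^2$ plus $N(N-1)\langle\varphi,K\varphi\rangle$, and use the orthogonality hypotheses together with the monotone decay $|\alpha_k|>|\alpha_{k+2}|$ to confine $\langle\varphi,K\varphi\rangle/\|\varphi\|_2^2$ to the interval $[\alpha_6,\alpha_4]$. The paper's proof is the same computation, stated more tersely.
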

  
  \begin{proof}
Since
$$
\Vert g \Vert^2 = N\Vert \varphi \Vert^2 + N(N-1) \int_{S^{N-1}(\sqrt{N})} \varphi(v_1) (K\varphi)(v_1) {\rm d} \sigma
$$
we find on account of  \eqref{orth1} and the eigenvalues  of  the $K$ operator listed in \eqref{eigval} that
$\alpha_4$ is relevant for the upper bound, and $\alpha_6$  for the lower bound. 
\end{proof}

We shall also make use of higher-order correlation operators, a whole family of which is studied in Section 4. For $N\geq 4$, the operator
$K_{N,2}$, acting on functions $\psi$ on the disk of radius $\sqrt{N}$ in $\R^2$, is defined through the quadratic form
$$\int_{S^{N-1}(\sqrt{N})} \psi(v_1,v_2) [K_{N,2}\psi](v_1,v_2){\rm d}\sigma = 
\int_{S^{N-1}(\sqrt{N})} \psi(v_1,v_2) \psi(v_{N-1},v_N){\rm d}\sigma\ ,$$
in analogy with (\ref{Koperator}). (For $N=3$, $K_{3,2}$ is defined in terms  of projection of functions depending only on $v_2$ and $v_3$ onto the subspace of functions depending only on $v_1$.)

However, most of what follows depends on the properties of the single-particle correlation operator $K$, and we postpone
further analysis of $K_{N,2}$ to Section 4, from which we shall quote results as needed. 

We close this subsection with a few comments on how  correlations  bounds may be used to show that for admissible trial function $f \in \mathcal{A}_N$,
$f^2$ must be largely concentrated on configurations $v$ for which $W^{(\gamma)}$ is very close to 1. 

Suppose $f(v) = \sum_{k=1}^N\varphi(v_k)$ is an admissible trial function, with $\varphi(v_k)$ orthogonal to $1$ and $v_k^2$ in $L^2(S^{N-1}(\sqrt{N})$. Then
$$1 = \int_{S^{N-1}(\sqrt{N})}f^2{\rm d}\sigma =  \sum_{j,k=1}^N\int_ {S^{N-1}(\sqrt{N})}\varphi(v_j) \varphi(v_k){\rm d}\sigma\  .$$
If we make the assumption that the $\varphi(v_k)$ are exactly independent, all terms with $j\ne k$ vanish, and we have that
$$\sum_{k=1}^N \int_{S^{N-1}(\sqrt{N})} \varphi^2(v_k){\rm d}\sigma = 1\ .$$

Again assuming that the coordinate functions are {\em exactly} independent,
$$N =  \int_{S^{N-1}(\sqrt{N})}Nf^2{\rm d}\sigma  =  \sum_{j,k,\ell=1}^N\int_ {S^{N-1}(\sqrt{N})} v_j^2\varphi(v_k)\varphi(v_\ell){\rm d}\sigma =
\sum_{j,k=1}^N\int_ {S^{N-1}(\sqrt{N})} v_j^2\varphi(v_k)^2{\rm d}\sigma\ .$$
This reduces to
${\displaystyle \sum_{k=1}^N\int_ {S^{N-1}(\sqrt{N})} v_k^2\varphi(v_k)^2{\rm d}\sigma = 1}$.
A similar calculation using $N^2 = \sum_{i,j=1}^Nv_i^2v_j^2$ leads to 
${\displaystyle \sum_{k=1}^N\int_ {S^{N-1}(\sqrt{N})} v_k^4\varphi(v_k)^2{\rm d}\sigma = 2N}$.
From here, making further use of the assumed independence, one readily derives
\begin{equation}\label{gby}
\int_ {S^{N-1}(\sqrt{N})} \left[\sum_{k=1}^Nv_k^4\right] f^2{\rm d}\sigma \leq 2N + 3(N-1) \leq 5 N
\end{equation}
for all $N$. 

Recall that $W^{(\gamma)}$ has its minimum at the points of $S^{N-1}(\sqrt{N})$ at which $\sum_{k=1}^Nv_k^4= N^2$. In fact, assuming the bound
(\ref{gby}), it is not hard to show, using Chebychev's inequality,  that
$$\int_ {S^{N-1}(\sqrt{N})}  W^{(\gamma)} f^2 {\rm d}\sigma \geq 1 - \frac{C}{N^{3/2}}$$
for a computable constant $C$. This would resolve the main difficulty we encountered in the previous section. In our actual proof, we employ a somewhat more intricate argument that gives us $\mathcal{O}(1/N^2)$ errors, but we hope this heuristic discussion has explained the utility of the correlation bounds we investigate next.

\subsection{Lower bound on $\widehat{\Delta}_N$.}

We now lay the groundwork for the proof of Theorem~\ref{main}.  We introduce a second approach to bounding $\mathcal{F}_N(f,f)$
from below that will yield more incisive bounds for large values of $N$. The starting point for this approach uses the original formula
for  $\mathcal{F}_N(f,f)$ from Definition~\ref{fformdef}: 
$$
\mathcal{F}_N(f,f) =  \frac{1}{N} \sum_{k=1}^N\left[   \int_{S^{N-1}(\sqrt{N})} w^{(\gamma)}(v_k)[f - P_kf]^2\dd \sigma
 \right]\ 
$$
where ${\displaystyle w^{(\gamma)}(v) = \left(\frac{N - v^2}{N-1}\right)^\gamma}$.
Note that the integrand is positive, and we will exploit some of the cancelations between $f$ and $P_kf$.

\begin{lm}\label{mainL} Let $f$ have the form  $f = \sum_{j=1}^N\varphi(v_j)$ with $\varphi$ orthogonal to $1$ and $v^2$. Then for all $N\geq 3$, 
\begin{equation}\label{main1L}
\mathcal{F}_N(f,f) \geq \frac{N-1}{N}\left(1 - \frac{A_N}{N^2}\right)\|f\|_2^2
\end{equation}
where $A_N$ is given by (\ref{an}), and where $p(N)$, $q(N)$ and $r(N)$ are given by (\ref{pqrn}).
\end{lm}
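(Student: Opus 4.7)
The plan is to evaluate $\mathcal F_N(f,f)$ directly on the class $\mathcal A_N$ and peel off a main contribution of size $\tfrac{N-1}{N}\|f\|^2$ from a remainder of size $\tfrac{A_N}{N^2}\|f\|^2$. First I would note that, since $\varphi\perp 1$ and $\varphi\perp v^2$, for $j\ne k$ the conditional expectation $P_k\varphi(v_j)$ equals $(K\varphi)(v_k)$, with $\varphi$ supported in the span of the eigenspaces of $K$ for $\alpha_4,\alpha_6,\ldots$; in particular $\alpha_6\|\varphi\|^2\le\langle\varphi,K\varphi\rangle\le\alpha_4\|\varphi\|^2$ and $\|K\varphi\|^2\le\alpha_4^2\|\varphi\|^2$. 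Consequently $P_kf=\varphi(v_k)+(N-1)(K\varphi)(v_k)$ and
$$f-P_kf=\sum_{j\ne k}\varphi(v_j)-(N-1)(K\varphi)(v_k).$$

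Next I would split $w^{(\gamma)}(v_k)=1+(w^{(\gamma)}(v_k)-1)$, so $\mathcal F_N(f,f)=I+II$ with $I=\frac1N\sum_k\|f-P_kf\|^2$ (the ``Maxwellian'' piece) and $II$ the weight correction. By symmetry $I=\|f\|^2-\|P_1f\|^2$; using $\|f\|^2=N\|\varphi\|^2+N(N-1)\langle\varphi,K\varphi\rangle$ and $\|P_1f\|^2=\|\varphi\|^2+2(N-1)\langle\varphi,K\varphi\rangle+(N-1)^2\|K\varphi\|^2$ and simplifying,
$$I=\frac{N-1}{N}\|f\|^2\left(1-\frac{\langle\varphi,K\varphi\rangle+(N-1)\|K\varphi\|^2}{\|\varphi\|^2+(N-1)\langle\varphi,K\varphi\rangle}\right).$$
The bounds from the first paragraph, combined with Lemma~\ref{gprop2}, turn the parenthesized factor into $1-O(1/N^2)$, yielding the $\gamma$-independent $p(N)/r(N)$ contribution to $A_N$.

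For the weight correction I would use concavity of $x\mapsto x^\gamma$ to write
$$w^{(\gamma)}(v_k)-1=-\frac{\gamma(v_k^2-1)}{N-1}+R_\gamma(v_k),\qquad R_\gamma(v_k)\le 0.$$
The linear piece contributes, by symmetry, $-\frac{\gamma}{N-1}\int(v_1^2-1)[f-P_1f]^2\dd\sigma$. I would expand $[f-P_1f]^2$ into its one-, two-, and three-particle components and exploit that $v^2-1$ is precisely the $\alpha_2$-eigenfunction of $K$ (so correlations of $(v_1^2-1)$ with any $\varphi(v_j)$, $j\ne 1$, are damped by a factor $\alpha_2=-1/(N-1)$), together with the two-particle correlation operator $K_{N,2}$ from Section~4 for the genuinely three-particle cross-terms $(v_1^2-1)\varphi(v_j)\varphi(v_{j'})$ with $j,j'\ne 1$, $j\ne j'$. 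Each surviving term is $O(\gamma/N^2)\|f\|^2$. The nonpositive remainder $R_\gamma$ is controlled by combining the Taylor bound $|R_\gamma(v)|\le C_\gamma(v^2-1)^2/(N-1)^2$ on a ``typical'' region with the crude $|R_\gamma|\le 1$ on a ``bad'' set, dispatched by Chebyshev and the fourth-moment estimate \eqref{gby}. Both contributions assemble into the $\gamma q(N)/r(N)$ piece of $A_N$.

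The main obstacle is the treatment of $II$: the weight $w^{(\gamma)}-1$ is not small uniformly, since it equals $-1$ at $v_k^2=N$, so a pointwise Taylor bound alone loses too much near the boundary of the energy sphere. One must use that atypical configurations have small probability while $[f-P_kf]^2$ does not concentrate on them, which is precisely what the correlation estimates of Section~4 (and the spectral information on $K_{N,2}$) quantify. Once these estimates are in place, the assembly of the explicit formula $A_N=(p(N)+\gamma q(N))/r(N)$ is a matter of polynomial bookkeeping and the application of Lemma~\ref{gprop2} to compare $\|\varphi\|^2$ with $\|f\|^2$.
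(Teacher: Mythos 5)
Your split $\mathcal{F}_N = I + II$ is a legitimate reorganization: $I = \frac1N\sum_k\|f-P_kf\|^2$ is the $\gamma=0$ piece, and your closed form $I = (N-1)\bigl[\|\varphi\|^2 + (N-2)\langle\varphi,K\varphi\rangle - (N-1)\|K\varphi\|^2\bigr]$ is correct; together with the fact that $\langle\varphi,K\varphi\rangle$ and $\|K\varphi\|^2$ are controlled by $\alpha_4,\alpha_6,\ldots$, this yields $\tfrac{N-1}{N}\|f\|^2\bigl(1-O(N^{-2})\bigr)$ and supplies the $\gamma$-free piece of $A_N$. The extraction of the linear part of $II$ through $K_{N,2}$ also matches what the paper does in Lemma~\ref{moments} and Lemma~\ref{speccomp}.

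The gap is in your handling of the remainder $R_\gamma$. You assert that the second-order bound $|R_\gamma(v)|\le C_\gamma(v^2-1)^2/(N-1)^2$ is only valid on a typical region and propose to patch the rest by Chebyshev together with the moment estimate (\ref{gby}). This plan has two defects. First, (\ref{gby}) is derived in the paper only under the heuristic fiction that the $\varphi(v_k)$ are exactly independent; you would need a rigorous replacement before using it. Second, the paper itself remarks, immediately after (\ref{gby}), that the Chebyshev route yields only a bound of the form $1 - C/N^{3/2}$, which does not deliver the explicit $A_N/N^2$ form claimed in the lemma. The resolution is in fact less work than your two-region scheme: the pointwise inequality holds \emph{globally}. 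Lemma~\ref{comp1} gives $(1+x)^\gamma \ge 1 + \gamma x - (1-\gamma)x^2$ for every $x > -1$, so with $x = (1-v^2)/(N-1)$ one obtains $R_\gamma(v) \ge -(1-\gamma)(v^2-1)^2/(N-1)^2$ on the entire range $0 \le v^2 \le N$, with no exceptional set and no Chebyshev. Substituting this lower bound replaces $w^{(\gamma)}$ by the polynomial weight $m(v)$ — which is precisely the paper's form $\mathcal{G}_N$ — and from there the expansion in $K$ and $K_{N,2}$ that you envision carries through and the polynomial bookkeeping produces $A_N = \bigl(p(N)+\gamma q(N)\bigr)/r(N)$. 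In short: your skeleton is right, but the Chebyshev step must be replaced by the global inequality of Lemma~\ref{comp1}.
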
 

Theorem~\ref{main} is an immediate consequence of this Lemma, and previous observation:

\begin{proof}[Proof of Theorem~\ref{main}] It follows from the definition of $\widehat \Gamma_N$ in (\ref{redinfA})  and Lemma~\ref{mainL}
that 
${\displaystyle \widehat \Gamma_N \geq \frac{N-1}{N}\left(1 - \frac{A_N}{N^2}\right)}$. 
It then follows from Remark~\ref{reduced} that the bound in Theorem~\ref{main} is valid.
\end{proof}

To get a close estimate on $\mathcal{F}_N(f,f) $, we need to do a number of exact calculations that can be done with polynomials.
The following lemma will give us the reduction from $w^{(\gamma)}(v)$ to a polynomial weight function.

 \begin{lm}\label{comp1}
 For all $0 < \gamma < 1$ and all $x > -1$,
 \begin{equation}\label{com}
 (1+x)^ \gamma \geq 1 +  \gamma x - (1- \gamma)x^2\ .
 \end{equation}
 Furthermore, for all $\gamma$ such that
 \begin{equation}\label{com2}
 \left(\frac{\gamma}{2}\right)^{1/(2-\gamma)} \geq \frac{1-\gamma}{2-\gamma}\ ,
 \end{equation}
  the function
 \begin{equation}\label{com3}
  x\mapsto  (1- \gamma)x^2 +  (1+x)^ \gamma
 \end{equation}
  is strictly monotone increasing on $(-1,\infty)$.

 \end{lm}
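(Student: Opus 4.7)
The plan is as follows. For the first inequality I will set $h(x) := (1+x)^\gamma - 1 - \gamma x + (1-\gamma)x^2$ and show $h(x)\geq 0$ on $(-1,\infty)$. Routine evaluation yields the boundary data $h(-1^+) = 0$, $h(0) = 0$, $h'(0) = 0$, and $h''(0) = (1-\gamma)(2-\gamma) > 0$, so the origin is a local minimum of $h$ with value $0$. The governing identity is
$$h''(x) = (1-\gamma)\bigl[\,2 - \gamma(1+x)^{\gamma-2}\,\bigr],$$
which changes sign from negative to positive at the unique point
$$x_* := (\gamma/2)^{1/(2-\gamma)} - 1 \in (-1,0),$$
the inclusion following from $\gamma/2 \in (0,1)$ and $1/(2-\gamma) > 0$.

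Hence $h'$ is unimodal on $(-1,\infty)$: it decreases from $+\infty$ at $x = -1^+$ to a unique minimum $h'(x_*) < h'(0) = 0$ on $(-1, x_*)$, then increases back to $+\infty$ on $(x_*, \infty)$. Consequently $h'$ has exactly two zeros, $x = 0$ and some $x_{**} \in (-1, x_*)$. Reading off the sign structure, $h$ is increasing on $(-1, x_{**})$, decreasing on $(x_{**}, 0)$, and increasing on $(0, \infty)$. Together with the boundary values $h(-1^+) = h(0) = 0$, this forces $h \geq 0$ throughout $(-1,\infty)$, which is exactly \eqref{com}.

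For the monotonicity assertion, set $g(x) := (1-\gamma)x^2 + (1+x)^\gamma$, so that $g'(x) = 2(1-\gamma)x + \gamma(1+x)^{\gamma-1}$. A direct computation shows $g''(x) = h''(x)$, so $g'$ attains its global minimum on $(-1,\infty)$ at the same point $x_*$. The defining relation $\gamma = 2(1+x_*)^{2-\gamma}$ collapses the exponential term via $\gamma(1+x_*)^{\gamma - 1} = 2(1+x_*)$, giving
$$g'(x_*) = 2(1-\gamma)x_* + 2(1+x_*) = 2(2-\gamma)(1+x_*) - 2(1-\gamma).$$
Thus $g'(x_*) \geq 0$ is equivalent to $(1+x_*) = (\gamma/2)^{1/(2-\gamma)} \geq (1-\gamma)/(2-\gamma)$, which is precisely hypothesis \eqref{com2}. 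Since $x_*$ is the unique critical point of $g'$ and its global minimum on $(-1,\infty)$, we get $g'(x) \geq g'(x_*) \geq 0$, with equality at most at the isolated point $x_*$. Hence $g$ is strictly monotone increasing.

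The only mildly nontrivial obstacle is the algebraic step in the second part: recognizing that evaluating $g'$ at its minimizer $x_*$, after using the defining equation for $x_*$, collapses to $2[(2-\gamma)(1+x_*) - (1-\gamma)]$, whose nonnegativity is precisely \eqref{com2}. Everything else reduces to sign analysis of the common second derivative $h'' = g''$ on $(-1,\infty)$.
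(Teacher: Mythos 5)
Your proof is correct and follows essentially the same path as the paper's: same function $h=\eta$, same observation that $h''(x)=(1-\gamma)\bigl(2-\gamma(1+x)^{\gamma-2}\bigr)$ changes sign once at $x_*=(\gamma/2)^{1/(2-\gamma)}-1\in(-1,0)$, and the same evaluation of $g'(x_*)$ collapsing to $2\bigl[(2-\gamma)(\gamma/2)^{1/(2-\gamma)}-(1-\gamma)\bigr]$, whose nonnegativity is exactly \eqref{com2}. You are a bit more explicit than the paper in two places — the full sign analysis of $h'$ (the paper instead argues via concavity of $\eta$ on $[-1,x_*]$ and convexity on $(x_*,\infty)$, which implicitly needs $\eta(x_*)\ge 0$), and the observation that $g'$ can vanish only at the isolated point $x_*$ so strict monotonicity survives equality in \eqref{com2} — but the underlying argument is the same.
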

 
 \medskip
 
 \begin{proof}Let $\eta(x)$ be defined by
 $\eta(x) := (1+x)^ \gamma - [ 1 +  \gamma x - (1- \gamma)x^2]$.
 Note that
 $$ \eta''(x) = (1- \gamma)(2 -  \gamma(1+x)^{ \gamma-2})\ .$$
 Thus, $\eta''(x) =0$ has the single solution $x = x_*$ where
 $$ x_*  := ( \gamma/2)^{1/(2- \gamma)} -1\ .$$
 Note that $\eta$ is convex on $(x_*,\infty)$, and concave on $(-1, x_*]$, and also that
  $-1 < x_* < 0$. 
 
 Since $\eta$ is concave on $[-1,x_*]$, 
 $$\min\{ \eta(x) \ :\ -1 \leq x \leq x_*\ \} = \min\{\eta(-1),\eta(x_*)\} = \eta(-1) = 0\ .$$
 Since  $\eta$ is convex  on $(x_*,\infty)$, and this interval contains a point, namely $0$,
 at which $\eta'$ vanishes, the minimum of $\eta$ over this interval is attained at $x=0$, and 
 thus $\eta$ is non-negative on $(x_*,\infty)$ as well as on  $[-1,x_*]$.
 
 For the second part, define $\xi(x) := (1- \gamma)x^2 +  (1+x)^ \gamma$, and note that 
 $\xi''(x) = \eta''(x)$, so that with $x_*$ defined as above, $\xi''(x_*) = 0$. Direct computation shows that
 $\xi'''(x) > 0$ on $(-1,\infty)$, and so $\xi'$ is a strictly convex function on  $(-1,\infty)$.
 It is therefore minimized at $x_*$.  Computing $\xi'(x_*)$, one finds
 $$\xi'(x_*) = 2\left[(2-\gamma)\left(\frac{\gamma}{2}\right)^{1/(2-\gamma)} - (1-\gamma)\right]\ .$$
 This is positive if and only if (\ref{com2}) is satisfied. Thus, $\xi'$ is strictly positive if and only if
 (\ref{com2}) is satisfied.
\end{proof}

Lemma \ref{comp1}  gives us the lower bound
$$w^{(\gamma)}(v) \geq m(v) := 1 +\gamma \left(\frac{1 - v^2}{N-1}\right) 
-(1-\gamma)\left(\frac{1 - v^2}{N-1}\right)^2\ .$$
Note that 
$$m(v) = \left[1-(1-\gamma)\left(\frac{1 - v^2}{N-1}\right) \right]\left[ \left(\frac{1 - v^2}{N-1}\right)  +1\right] > 0\ .$$
Then
$$\mathcal{F}_N(f,f)\geq  \frac{1}{N} \sum_{k=1}^N\left[   \int_{S^{N-1}(\sqrt{N})} m(v_k)[f - P_kf]^2\dd \sigma
 \right]  := \mathcal{G}_N(f,f)\ ,$$
 and so it suffices to prove (\ref{main1L}) with $\mathcal{G}_N(f,f)$ in place of $\mathcal{F}_N(f,f)$.   

\begin{proof}[Proof of Lemma~\ref{mainL}]
Since $f = \sum_{j=1}^N\varphi(v_j)$, 
$$P_kf(v) = \varphi(v_k) + (N-1)K\varphi(v_k)\quad{\rm and\ hence}\quad f(v) - P_k f(v) = \sum_{j\neq k}\varphi(v_j) - (N-1)K\varphi(v_k)\ ,$$
where we have used the $K$ operator defined in \eqref{Koperator}.  Developing the square yields
\begin{eqnarray}
 \mathcal{G}_N(f,f)&=&
\frac{1}{N}\sum_{k=1}^N \int_{S^{N-1}(\sqrt{N})} m(v_k)[ \sum_{j\neq k} \varphi(v_j) - (N-1)K\varphi(v_k)]^2\dd \sigma
\nonumber\\
&=&  (N-1)  \int_{S^{N-1}(\sqrt{N})} m(v_1)\varphi^2(v_2)\dd \sigma\nonumber\\
&+& (N-1)(N-2)  \int_{S^{N-1}(\sqrt{N})} m(v_1)\varphi(v_2)\varphi(v_3)\dd \sigma\nonumber\\
&-& 2(N-1)^2  \int_{S^{N-1}(\sqrt{N})} m(v_1)K\varphi(v_1)\varphi(v_2)\dd \sigma\nonumber\\
&+& (N-1)^2  \int_{S^{N-1}(\sqrt{N})} m(v_1)(K\varphi(v_2))^2 \dd \sigma\nonumber\\ \ .
\end{eqnarray}

Since
$$ \int_{S^{N-1}(\sqrt{N})} m(v_1)K\varphi(v_1)\varphi(v_2)\dd \sigma = 
 \int_{S^{N-1}(\sqrt{N})} m(v_1)K\varphi(v_1)K\varphi(v_1)\dd \sigma\ ,$$
 and because
 $$\int_{S^{N-1}(\sqrt{N})} m(v_1)\varphi^2(v_2)\dd \sigma =
 \int_{S^{N-1}(\sqrt{N})} K m(v_1)\varphi^2(v_1)\dd \sigma\ ,$$
 \begin{eqnarray}\label{atl2}
\mathcal{G}_N(f,f)&=& 
 (N-1)  \int_{S^{N-1}(\sqrt{N})} Km(v_1)\varphi^2(v_1)\dd \sigma\nonumber\\
&+&  (N-1)(N-2)  \int_{S^{N-1}(\sqrt{N})} m(v_1)\varphi(v_2)\varphi(v_3)\dd \sigma\nonumber\\
&-& (N-1)^2  \int_{S^{N-1}(\sqrt{N})} m(v_1)(K\varphi(v_1))^2 \dd \sigma\nonumber\\
&=& I_1+I_2 + I_3\ .
\end{eqnarray}

Of the three integrals, $I_3$ is the easiest to estimate.  Noting that $m(v) \leq 1 +\gamma/(N-1)$, 
\begin{eqnarray}
I_3 &\geq& -(N-1)^2(1 + \frac{\gamma}{N-1}) \Vert K\varphi \Vert^2 \nonumber \\
&\geq& -(1 + \frac{\gamma}{N-1}) \frac{9}{(N+1)^2}  \Vert \varphi \Vert^2 \nonumber\ ,
\end{eqnarray}
and using Lemma \ref{gprop2}
we find the lower bound
\begin{equation}\label{I3b}
I_3 \geq -\left(1+\frac{\gamma}{N-1}\right)\left(1 - \frac{15}{(N+1)(N+3) }\right)^{-1}\frac{9}{(N+1)^2 N}\|f\|_2^2 \ .
\end{equation}

Next, we estimate $I_1$. Since, 
$$m(v) = \left[1+\frac{\gamma}{N-1}-\frac{1-\gamma}{(N-1)^2}\right] -
\left[ \frac{\gamma}{N-1} - 2\frac{1-\gamma}{(N-1)^2}\right]v^2 -
\frac{1-\gamma}{(N-1)^2}v^4\ ,$$
and because
$$K v^2 = -\frac{1}{N-1}(v^2 - N)\qquad{\rm and}\qquad 
K v^4 = \frac{3}{N^2-1}(v^2 - N)^2\ ,$$
\begin{multline}Km(v) = \left[1+\frac{\gamma}{N-1}-\frac{1-\gamma}{(N-1)^2}\right]\\ +
\left[ \frac{\gamma}{N-1} - 2\frac{1-\gamma}{(N-1)^2}\right]\frac{1}{N-1}(v^2 - N) -
\frac{1-\gamma}{(N-1)^2}\frac{3}{N^2-1}(v^2 - N)^2\ .
\end{multline}
Introducing $x = (v^2- N)/(N-1)$, so that $-N/(N-1) \leq x \leq 0$, we have
$$Km(v)  =   \left[1+\frac{\gamma}{N-1}-\frac{1-\gamma}{(N-1)^2}\right] + 
\left[ \frac{\gamma}{N-1} - 2\frac{1-\gamma}{(N-1)^2}\right]x - \frac{3(1-\gamma)}{N^2-1} x^2\ .$$
The right hand side is a concave function of $x$, so the minimum occurs at either $x= 0$
or
$x = -N/(N-1)$.  Direct computation shows that the minimum occurs at $x = -N/(N-1)$, and making a few 
simplifying estimates, we obtain the bound

\begin{equation}\label{sim}
Km(v) \geq  1 - \frac{2-\gamma}{(N-1)^2}\ .
\end{equation}

In fact, not making the simplifying assumptions, we have the stronger bound

\begin{equation}\label{detail}
Km(v) \geq  1 - \frac{2-\gamma}{(N-1)^2} + \frac{(1-\gamma)(2N-1)}{(N-1)^3(N+1)}\ .
\end{equation}

Thus, using \eqref{sim},
\begin{eqnarray}\label{I1b} I_1 &\geq& 
(N-1)\left( 1 - \frac{2-\gamma}{(N-1)^2}\right) \int_{S^{N-1}(\sqrt{N})} \varphi^2(v_1)\dd \sigma\nonumber\\
&\geq& \frac{N-1}{N}\left( 1 - \frac{2-\gamma}{(N-1)^2}\right)\left[ \|f\|_2^2 - N(N-1) \int_{S^{N-1}(\sqrt{N})} \varphi(v_2)
 \varphi(v_3)\dd \sigma\right]\ .
\end{eqnarray}
Adding the right side of (\ref{I1b}) to  $I_2$,  we obtain

\begin{eqnarray}  
I_1+I_2 
&\geq& \frac{N-1}{N}\left( 1 - \frac{2-\gamma}{(N-1)^2}\right)\|f\|_2^2  \nonumber\\
&+&(N-1) \int_{S^{N-1}(\sqrt{N})}\left[(N-2)m(v_1) - \left( 1 - \frac{2-\gamma}{(N-1)^2}\right)(N-1) \right] \varphi(v_2)
 \varphi(v_3)\dd \sigma\nonumber\\
 &=:&  \frac{N-1}{N}\left( 1 - \frac{2-\gamma}{(N-1)^2}\right)\|f\|_2^2 + J\label{Jdef}
\end{eqnarray}
where the last line defines $J$.
We compute
\begin{eqnarray}\left[(N-2)m(v_1) - \left( 1 - \frac{2-\gamma}{(N-1)^2}\right)(N-1) \right] &=&
-(1-\gamma)\left(1- \frac{N}{(N-1)^2}\right)\nonumber\\
&-& (N-2)\left[ \frac{\gamma}{N-1} - 2\frac{1-\gamma}{(N-1)^2}\right]v_1^2\nonumber\\
&-&
(N-2)\frac{1-\gamma}{(N-1)^2}v_1^4
\nonumber\\
\end{eqnarray}

Therefore,
\begin{eqnarray}
J
&=&   -(1-\gamma)\left(1- \frac{N}{(N-1)^2}\right) (N-1)(\varphi,K\varphi)\nonumber\\
&-&(N-2)\left[ \gamma- 2\frac{1-\gamma}{(N-1)}\right] \int_{S^{N-1}(\sqrt{N})}v_1^2
\varphi(v_2)\varphi(v_3)\dd \sigma \nonumber\\
& -&
(N-2)\frac{1-\gamma}{(N-1)} \int_{S^{N-1}(\sqrt{N})} v_1^4\varphi(v_2)\varphi(v_3)\dd \sigma \nonumber\ .
\end{eqnarray}

To estimate the moments we use the following lemma:
 \begin{lm} \label{moments}
\begin{eqnarray}
& &\int_{S^{N-1}(\sqrt{N})} v_1^2 \varphi(v_2) \varphi(v_3)   \dd \sigma =  \frac{([N - 2v^2]\varphi, K \varphi)}{N-2} \nonumber \\ 
& & \int_{S^{N-1}(\sqrt{N})} v_1^4 \varphi(v_2) \varphi(v_3)   \dd \sigma = \frac{([N^2-4Nv^2 +2v^4] \varphi, K \varphi) + 2(v^2 \varphi, K(v^2 \varphi)) }{(N-2)^2} 
\end{eqnarray}
where $(\psi, \varphi)$ denotes the inner product
\begin{equation}
  \int_{-\sqrt N}^{\sqrt N}\psi(v)\varphi(v){\rm d}\nu_N = \int_{S^{N-1}(\sqrt{N})} \psi(v_1) \varphi(v_1)   \dd \sigma  \ ,
 \end{equation}
 and  $\Vert \varphi \Vert^2 = (\varphi, \varphi)$.
\end{lm}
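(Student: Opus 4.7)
I would prove the two identities separately, since the first follows by pure symmetry-and-constraint bookkeeping while the second needs an additional ingredient about the uniform measure on the sphere.

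For the first identity, I would multiply the pointwise equality $N = \sum_{i=1}^N v_i^2$ on $\STE$ by $\varphi(v_2)\varphi(v_3)$ and integrate against $\dd\sigma$. The left side is $N(\varphi, K\varphi)$ by the definition of $K$. On the right, permutation invariance of $\dd\sigma$ forces $\int v_i^2 \varphi(v_2)\varphi(v_3)\dd\sigma$ to be a common value $B_{1,2}$ for every $i \notin \{2,3\}$, while both $i = 2$ and $i = 3$ give $(v^2\varphi, K\varphi)$. The scalar equation $(N-2) B_{1,2} + 2(v^2\varphi, K\varphi) = N(\varphi, K\varphi)$ then solves to give the stated formula.

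The natural analogue for the second identity---squaring $N = \sum v_i^2$ and doing the same bookkeeping---produces only a single linear relation among the three distinct ``three-body'' moments $B_1 := \int v_1^4 \varphi(v_2)\varphi(v_3)\dd\sigma$, $B_2 := \int v_1^2 v_2^2 \varphi(v_2)\varphi(v_3)\dd\sigma$, and $B_3 := \int v_1^2 v_4^2 \varphi(v_2)\varphi(v_3)\dd\sigma$. Combined with the relation from the first identity applied at the $v_1^4$ level, this determines $B_2$ in closed form but leaves $B_1$ and $B_3$ genuinely underdetermined by the spherical constraint alone. This is the main obstacle: permutation symmetry together with $\sum v_i^2 = N$ cannot by themselves pin down fourth-order three-body moments like $B_1$.

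To overcome this I would condition on $(v_2, v_3)$. Conditional on these two coordinates, $(v_1, v_4, \ldots, v_N)$ is uniform on $S^{N-3}(R)$ with $R^2 = N - v_2^2 - v_3^2$, and the classical formula for fourth moments of the uniform measure on a sphere gives an explicit expression for $E[v_1^4 \mid v_2, v_3]$ as $R^4$ times a universal constant. Substituting and expanding
$$R^4 = N^2 - 2N(v_2^2 + v_3^2) + v_2^4 + 2v_2^2 v_3^2 + v_3^4,$$
integrating term by term against $\varphi(v_2)\varphi(v_3)\dd\sigma$, and using the defining property of $K$ (together with the $v_2 \leftrightarrow v_3$ symmetry to pair matching terms) to identify each of the six resulting integrals with one of $(\varphi, K\varphi)$, $(v^2\varphi, K\varphi)$, $(v^4\varphi, K\varphi)$, or $(v^2\varphi, K(v^2\varphi))$ then produces the claimed formula after collecting coefficients. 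The computation is routine; the real content is the conditional moment formula, which is what breaks the degeneracy left by the pure symmetry argument.
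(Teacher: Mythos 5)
Your argument is correct, and at the key point it is the same as the paper's: the paper evaluates $\int v_1^{2j}\varphi(v_2)\varphi(v_3)\,\dd\sigma$ by replacing $v_1^{2j}$ with its orthogonal projection $P_{\{2,3\}}v_1^{2j}$ onto functions of $(v_2,v_3)$, which is precisely the conditional expectation $E[v_1^{2j}\mid v_2,v_3]$ that you compute from the moments of the uniform measure on the slice $S^{N-3}\bigl(\sqrt{N-v_2^2-v_3^2}\bigr)$. Two remarks. First, your treatment of the $v_1^2$ integral is marginally more elementary than the paper's: multiplying $\sum_i v_i^2 = N$ by $\varphi(v_2)\varphi(v_3)$ and counting terms avoids any explicit slice-moment computation, and your observation that this pure-symmetry bookkeeping produces only degenerate linear relations at fourth order (so it cannot pin down $B_1$, $B_3$ separately) is correct and is exactly the right diagnosis of why the second identity needs the conditioning step. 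Second, a caution when you actually collect coefficients: conditional on $(v_2,v_3)$ there are $N-2$ free coordinates, so $E[v_1^4\mid v_2,v_3]=\frac{3R^4}{(N-2)N}$, and carrying this through gives the overall prefactor $\frac{3}{(N-2)N}$, not the $\frac{1}{(N-2)^2}$ printed in the statement; the test $\varphi\equiv 1$ gives $E[v_1^4]=3N/(N+2)$ on $S^{N-1}(\sqrt N)$, which matches the $\frac{3}{(N-2)N}$ version and not the printed one. (The same off-by-one appears in Lemma~\ref{speccomp}, where $N-3$ and $(N-3)(N-1)$ should read $N-2$ and $(N-2)N$ for a slice of geometric dimension $N-3$ embedded in $\R^{N-2}$.) Your route is sound; just make sure the constant you land on passes the $\varphi\equiv 1$ sanity check before matching it to the stated lemma.
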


\begin{proof} Let $\psi_j(v) = v_1^{2j} $.  Letting $P_{\{2,3\}}$ denote the orthogonal projection in $\mathcal{H}$ 
onto the subspace of functions depending only on $v_2$ and $v_3$. Then 
$$\int_{S^{N-1}(\sqrt{N})} v_1^{2j} \varphi(v_2) \varphi(v_3)   \dd \sigma =    \int_{S^{N-1}(\sqrt{N})} P_{\{2,3\}}\psi_{j}(v_2,v_3) \varphi(v_2) \varphi(v_3)   \dd \sigma\ .$$
It is easy to compute $P_{\{2,3\}}\psi_j$ using formulas for the operator $K_{N,2}$ deduced in the final section. In Lemma~\ref{speccomp}, it is shown that
\begin{equation}\label{change10A}
P_{\{2,3\}}\psi_2(v_2,v_3) =   \frac{1}{N-3}[N- (v_2^2+v_3^2)]
\end{equation}
and
\begin{equation}\label{change11A}
P_{\{2,3\}}\psi_4(v_2,v_3) =  \frac{3}{(N-3)(N-1)}[N- (v_2^2+v_3^2)]^2\ .
\end{equation}
From here, the proof is a simple calculation using the definition of the $K$ operator. \end{proof}

Returning to the proof of Lemma~\ref{mainL}, we have
\begin{eqnarray}
J &\geq&   -(1-\gamma)\left(1- \frac{N}{(N-1)^2}\right) (N-1)(\varphi,K\varphi)\nonumber\\
&-&\left[ \gamma - 2\frac{1-\gamma}{(N-1)}\right]([N - 2v^2]\varphi, K \varphi) \nonumber\\
&-&
(1-\gamma)\frac{([N^2-4Nv^2 +2v^4] \varphi, K \varphi) + 2(v^2 \varphi, K(v^2 \varphi)) }{(N-1)(N-2)}  \nonumber\\
&=:& B_1 (\varphi,K\varphi)  + B_2(v^2\varphi,K\varphi) + B_3 [(v^4\varphi,K\varphi) +  (v^2\varphi,Kv^2 \varphi) ]\ . \nonumber
\end{eqnarray}

Collecting terms, we find
$$B_1 =  -(N-1)\left[   1 + \frac{(2+\gamma)N^2-(5\gamma+2)N + 2}{(N-1)^2(N-2) }   \right]\ .$$
Simple computations show that the quantity in square brackets is positive for all $0 \le \gamma \le 1$ and $N\geq 3$.

Likewise,
$$
B_2 = -\left[ 2\gamma + \frac{8(1-\gamma)}{(N-1)(N-2)}\right] 
\qquad{\rm and}\qquad 
B_3 = - \frac{2(1-\gamma)}{(N-1)(N-2)}\ .
$$
By Schwarz's inequality,
${\displaystyle 
|(v^4\varphi,K\varphi) | \le \Vert v^4 \varphi \Vert \Vert K\varphi \Vert \le \frac{3N^2}{N^2-1} \Vert \varphi \Vert^2
}$,
and likewise,
$$
 (v^2\varphi,Kv^2 \varphi)  \leq \frac{3N^2}{N^2-1} \Vert \varphi \Vert^2 \ .
 $$
In both cases we have used the fact that $\varphi$ as well as $v^2 \varphi$ are orthogonal to the constant function.
In a similar fashion we find that
${\displaystyle
|(v^2 \varphi, K\varphi)| \le \frac{3N}{N^2-1} \Vert \varphi \Vert^2
}$.
Collecting terms, we obtain
\begin{eqnarray}
J &\geq& \frac{3}{N^2-1}\left[ B_1  + NB_2 + 2N^2B_3\right ]\|\varphi\|^2\nonumber\\
&\geq& \frac{3}{N(N^2-1)}\left[ B_1  + NB_2 + 2N^2B_3\right]\left(1 - \frac{15}{(N+1)(N+3) }\right)^{-1} \|f\|_2^2\ .\nonumber
\end{eqnarray}
combining this estimate with (\ref{Jdef}) and (\ref{I3b}), and simplifying the sums, we obtain the desired bound on $I_1+I_2+I_3 =
\mathcal{G}_N(f,f)$. 
\end{proof}

\section{Lower bound for $\Delta_N$}

We now show how to decompose an admissible trial function $f$ in the variational formula for $\Delta_N$
into to components $g$ and $h$ where $g\in \mathcal{A}_N$, and $h$ satisfies $\langle h, P^{(\gamma)}h\rangle = 0$,
which means that $h$ makes no contribution to the negative term $\langle f, P^{(\gamma)}f\rangle$ in the induction bound
from Theorem~\ref{thm1}.  We use this, and further correlation estimates, to extend our lower bound for $\widehat{\Delta}_N$
into one for  ${\Delta}_N$.

\subsection{The trial function decomposition}

Let $\Pi$ denote the projection onto the space of functions orthogonal to the constants on 
$L^2(S^{N-1}(\sqrt{N}))$. Then the operator $\Pi P^{(\gamma)}\Pi$ is clearly self adjoint.

For any $f$ orthogonal to the constants, 
\begin{equation}\label{redn1}
\langle f, P^{(\gamma)} f \rangle =  \langle f,\Pi P^{(\gamma)} \Pi f \rangle\ .
\end{equation}
now decompose $f$ as $f = g+h$ where $h$ is in the null space of  $\Pi P^{(\gamma)}\Pi$,
and $g$ is in the range. Notice that $f$ and $g$ are orthogonal, so that 
$$\|f\|_2^2 = \|g\|_2^2 +    \|h\|_2^2\ .$$

By the definition of $h$ and (\ref{redn1}), 
\begin{equation}\label{redn2}
\langle f, P^{(\gamma)} f \rangle  = \langle g, P^{(\gamma)} g \rangle\ ,
\end{equation}
and hence
\begin{equation}\label{red2A}
  \int_{S^{N-1}(\sqrt{N})} W^{(\gamma)} f^2\dd \sigma
  - \langle f, P^{(\gamma)} f\rangle_{L^2(S^{N-1}(\sqrt{N}))} =
  \int_{S^{N-1}(\sqrt{N})} W^{(\gamma)} f^2\dd \sigma
  - \langle g, P^{(\gamma)} g\rangle_{L^2(S^{N-1}(\sqrt{N}))} \ .
  \end{equation}
  
 Notice that $h$ makes no contribution to the negative term on the right side of (\ref{red2A}). 
 In fact, an even stronger form of (\ref{redn2}) is true, and will be useful to us:

  \begin{lm}\label{ns1} Let $h$ be any function in $L^2((S^{N-1}(\sqrt{N}))$ that is orthogonal to the constants, and is in the null space of  $\Pi P^{(\gamma)}\Pi$. Then for each $k$,
  \begin{equation}\label{ns2}
  P_k h = 0\ .
  \end{equation}
  \end{lm}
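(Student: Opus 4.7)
The plan is to exploit the fact that $P^{(\gamma)}$ is actually a non-negative self-adjoint operator, so that being in the null space of $\Pi P^{(\gamma)}\Pi$ is really quite restrictive.

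First I would observe that since $h$ is orthogonal to the constants we have $\Pi h = h$, so the hypothesis $\Pi P^{(\gamma)}\Pi h = 0$ yields
\[
\langle h, P^{(\gamma)} h\rangle = \langle h,\Pi P^{(\gamma)}\Pi h\rangle = 0.
\]
Next I would rewrite $\langle h, P^{(\gamma)} h\rangle$ as a manifestly non-negative sum. For each $k$, the multiplication operator $w^{(\gamma)}(v_k) = ((N-v_k^2)/(N-1))^\gamma$ depends only on $v_k$, so it commutes with the projection $P_k$ onto functions of $v_k$ alone, and the product $w^{(\gamma)}(v_k)P_k$ is itself self-adjoint. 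Using $P_k^2 = P_k$ one obtains, in analogy with \eqref{pg2},
\[
\langle h, P^{(\gamma)} h\rangle \;=\; \frac{1}{N}\sum_{k=1}^N \int_{S^{N-1}(\sqrt{N})} w^{(\gamma)}(v_k)\, |P_k h|^2\,\dd\sigma.
\]
Every summand on the right is non-negative because $w^{(\gamma)}(v_k)\geq 0$ pointwise. So the vanishing of the left-hand side forces each of the $N$ integrals to vanish individually.

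Finally I would argue that $w^{(\gamma)}(v_k) > 0$ except on the set $\{v_k^2 = N\}$, which has $\dd\sigma$-measure zero (for $N\geq 2$). Therefore $w^{(\gamma)}(v_k)|P_k h|^2 = 0$ almost everywhere implies $|P_k h|^2 = 0$ almost everywhere with respect to the marginal $\dd\nu_N$. Since $P_k h \in L^2([-\sqrt{N},\sqrt{N}],\dd\nu_N)$, this gives $P_k h = 0$, which is the desired conclusion.

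There is really no hard step here: the result is essentially a structural fact that the ``diagonal'' positive-semidefinite operator $P^{(\gamma)} = \tfrac{1}{N}\sum_k w^{(\gamma)}(v_k)P_k$ has null vectors only when all of the weighted pieces $w^{(\gamma)}(v_k)^{1/2}P_k h$ vanish, and the pointwise positivity of $w^{(\gamma)}$ (off a measure-zero set) promotes this to $P_k h = 0$. The only point requiring any care is noting that $w^{(\gamma)}(v_k)$ and $P_k$ genuinely commute so that $w^{(\gamma)}(v_k)P_k$ is self-adjoint and one can write the quadratic form as the positive sum above.
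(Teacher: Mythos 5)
Your proof is correct and follows the paper's argument essentially verbatim: use $\Pi h = h$ to get $\langle h, P^{(\gamma)} h\rangle = 0$, expand this as the non-negative sum $\tfrac{1}{N}\sum_k \int w^{(\gamma)}(v_k)\,|P_kh|^2\,\dd\sigma$, and conclude each summand vanishes. You are, if anything, slightly more careful than the paper at the final step, noting that $w^{(\gamma)}(v_k)$ is strictly positive off the measure-zero set $\{v_k^2 = N\}$ (the paper asserts only $\geq 0$ a.e., which by itself would not force $|P_k h|^2$ to vanish).
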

  
\begin{proof}Since $\Pi h = h$, we have
  $$0 = \langle h, \Pi P^{(\gamma)}\Pi h\rangle =  \langle h,  P^{(\gamma)} h\rangle
  = \frac{1}{N}\sum_{k=1}^N\int_{S^{N-1}(\sqrt{N})}\left(\frac{N-v_k^2}{N-1}\right)^\gamma  |P_k h|^2\dd \sigma\ .$$
Since ${\displaystyle \left(\frac{N-v_k^2}{N-1}\right)^\gamma \ge 0}$ almost everywhere,
it must be the case that $|P_k h|^2$ vanishes identically. 
\end{proof}

 The other key feature of the decomposition is that $g\in \mathcal{A}_N$; i.e., here is a function $\varphi$ of
a single variable such that  $\varphi(v_k)\in L^2(S^{N-1}(\sqrt{N}))$ for each $k$ (or equivalently, for any $k$), and 
\begin{equation}\label{struc}
g( v) = \sum_{j=1}^N\varphi(v_j)\ .
\end{equation}
That is, the range of  $P^{(\gamma)}$ lies in the subspace $\mathcal{A}_N$ of $\mathcal{H}_N$ that figures in the definition (\ref{gapdef2}).  
This is because as long as $f$ is symmetric, so is $\Pi f$,  and  then $P^{(\gamma)}\Pi f$ has this form, and applying $\Pi$
preserves this form. Here we are using symmetry to ensure that we need just one and the same
 function $\varphi$ for each coordinate.

We now use the decomposition introduced at the beginning of this section to reduce the estimation of 
 $\Delta_N$ to the estimation of $\widehat{\Delta}_N$. 

We return to
\begin{equation}\label{ind2a}
\mathcal{F}_N(f,f) =  \frac{1}{N} \sum_{k=1}^N\left[   \int_{S^{N-1}(\sqrt{N})} w^{(\gamma)}(v_k)[f - P_kf]^2\dd \sigma
 \right]\ 
\end{equation}
and introduce the decomposition $f = g+h$. Then since $P_kh = 0$ for each $k$, 
$$ [f - P_kf]^2 = [g - P_k g]^2 + h^2 + 2[g - P_kg]h\ ,$$
\begin{equation}\label{main2}
\mathcal{F}_N(f,f) = \mathcal{F}_N(g,g)    + 
\frac{2}{N} \sum_{k=1}^N\left[   \int_{S^{N-1}(\sqrt{N})} w^{(\gamma)}(v_k)g h\dd \sigma \right]  +  \int_{S^{N-1}(\sqrt{N})}W^{(\gamma)}( v) h^2(v)\dd \sigma\ .
\end{equation}

We first estimate the cross terms. 

 \begin{lm}\label{crt} With $g$ and $h$ as above
 \begin{equation}\label{crt1}
\left| \frac{2}{N} \sum_{k=1}^N\left[   \int_{S^{N-1}(\sqrt{N})} w^{(\gamma)}(v_k)g h\dd \sigma \right]\right|
 \leq \frac{N-1}{N}\frac{C_N}{N^2}
 2\|g\|_2\|h\|_2
 \end{equation}
 where $C_N$ is given by (\ref{crt3}).
 \end{lm}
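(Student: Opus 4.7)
The plan is to exploit the fact that the decomposition $f=g+h$ makes $g$ and $h$ orthogonal in $L^2(S^{N-1}(\sqrt N))$, since they lie in the range and the null space respectively of the self-adjoint operator $\Pi P^{(\gamma)}\Pi$. Since $\frac{1}{N}\sum_{k=1}^N w^{(\gamma)}(v_k)=W^{(\gamma)}$, the left-hand side of \eqref{crt1} equals $2\int W^{(\gamma)}gh\,\dd\sigma$; using $\int gh\,\dd\sigma=0$ I may subtract to get
$$
\frac{2}{N}\sum_{k=1}^N\int w^{(\gamma)}(v_k)\,gh\,\dd\sigma \;=\; 2\int (W^{(\gamma)}-1)\,gh\,\dd\sigma\ .
$$
This is the crucial reduction: only the fluctuation of $W^{(\gamma)}$ about $1$ enters, which is where the prefactor $(1-\gamma)$ in $C_N$ will ultimately come from.

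Next I would apply Lemma~\ref{comp1} to each factor $w^{(\gamma)}(v_k)=(1+(1-v_k^2)/(N-1))^\gamma$. Because $\sum_k(1-v_k^2)=0$ the linear term drops after averaging over $k$, leaving the pointwise bound
$$
0\ \leq\ 1-W^{(\gamma)}(v)\ \leq\ \frac{1-\gamma}{N(N-1)^2}\Bigl(\sum_{k=1}^N v_k^4\ -\ N\Bigr)\ ,
$$
where the lower bound uses concavity of $t\mapsto t^\gamma$ (Lemma~\ref{weight}). Cauchy--Schwarz then gives
$$
\Bigl|\tfrac{2}{N}\sum_{k=1}^N\int w^{(\gamma)}(v_k)gh\,\dd\sigma\Bigr|^2
\;\leq\;\frac{4(1-\gamma)^2}{N^2(N-1)^4}\,\|h\|_2^2\,\int\Bigl(\sum_k v_k^4-N\Bigr)^2 g^2\,\dd\sigma\ .
$$

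The remaining, and most substantial, task is to bound the weighted $L^2$ norm
$$
\int\Bigl(\sum_k v_k^4-N\Bigr)^2 g^2\,\dd\sigma
$$
by a multiple of $\|g\|_2^2$, where the multiple, after chasing constants through the Cauchy--Schwarz bound above, must come out to exactly $\bigl(\tfrac{N-1}{N}\tfrac{C_N}{N^2}\bigr)^2\cdot N^2(N-1)^4/\bigl(4(1-\gamma)^2\bigr)$. Since $g\in\mathcal{A}_N$ one may write $g=\sum_j\varphi(v_j)$ with $\varphi$ orthogonal to both $1$ and $v^2$ by Lemma~\ref{gprop1}. Expanding $g^2=\sum_j\varphi(v_j)^2+\sum_{j\neq l}\varphi(v_j)\varphi(v_l)$ and the polynomial $(\sum_k v_k^4-N)^2$, permutation symmetry reduces the integral to a finite collection of expectations of the form $\int P_a(v_1)\varphi(v_1)^2\,\dd\sigma$ and $\int P_b(v_1,v_2)\varphi(v_1)\varphi(v_2)\,\dd\sigma$ for explicit polynomial weights $P_a,P_b$. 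Each such expectation is computed exactly using the single-particle correlation operator $K$ of \eqref{Koperator} and the two-particle operator $K_{N,2}$ of Section~6, together with the eigenvalues in \eqref{eigval}. Because $\varphi$ is orthogonal to $1$ and $v^2$, only the $\alpha_4$ and $\alpha_6$ components survive, and the pointwise identity $|\alpha_6|\,(N-1)(N+1)(N+3)=15$ accounts for both the $\sqrt{15}$ prefactor and the denominators $(N+1)(N+3)$ in the formula for $C_N$. The term involving two distinct $\varphi$ factors contributes the piece $\tfrac{8N}{(N-2)(N-4)^2}$ via the explicit spectrum of $K_{N,2}$ from Lemma~\ref{speccomp}, whose denominators involve $(N-2)(N-4)^2$. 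Finally, Lemma~\ref{gprop2} converts the resulting bound, naturally phrased in terms of $\|\varphi\|_2^2$, into one in terms of $\|g\|_2^2$, producing the factor $(1-15/((N+1)(N+3)))^{-1/2}$ in $C_N$.

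The hard part is the bookkeeping in the moment estimate: one must keep careful track of the diagonal and off-diagonal contributions in $g^2$ against the four families of terms produced by $(\sum_k v_k^4)^2=\sum_k v_k^8+\sum_{k\neq l}v_k^4 v_l^4$, retaining all $N$-dependent constants. Each individual step is elementary once the spectral data of $K$ and $K_{N,2}$ are invoked, but combining them into the precise form of $C_N$ given in \eqref{crt3} is the essential computation.
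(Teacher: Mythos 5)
Your opening reduction is fine and parallel to the paper's: since $g\perp h$, you may subtract any constant and pass to $2\int(W^{(\gamma)}-1)gh\,\dd\sigma$, and the linear term in Lemma~\ref{comp1} drops out after summing over $k$, leaving a quadratic control in $\sum_k(1-v_k^2)^2=\sum_k v_k^4 - N$.

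The genuine gap is the Cauchy--Schwarz step and what it loses. You bound
$$
\Bigl|\,2\int(W^{(\gamma)}-1)gh\,\dd\sigma\Bigr|\ \leq\ 2\|h\|_2\,\bigl\|(W^{(\gamma)}-1)g\bigr\|_2
\ \leq\ \frac{2(1-\gamma)}{N(N-1)^2}\,\|h\|_2\,\Bigl\|\bigl({\textstyle\sum_k v_k^4 - N}\bigr)g\Bigr\|_2\ ,
$$
and then need $\int(\sum_k v_k^4 - N)^2 g^2\,\dd\sigma \lesssim N^2\|g\|_2^2$. That estimate is false uniformly over admissible $g=\sum_j\varphi(v_j)$. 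When you expand $(\sum_k v_k^4-N)^2 g^2$ the diagonal terms of the form $\int v_m^8\,\varphi^2(v_m)\,\dd\sigma$ appear with the \emph{same} coordinate index on the weight and on $\varphi$; there is no near-independence to exploit there. On $S^{N-1}(\sqrt N)$ one has $v_m^8\le N^4$, and the ratio $\int v_1^8\varphi^2\,\dd\sigma/\|\varphi\|_2^2$ can approach $N^4$ for $\varphi$ concentrated where $|v|$ is of order $\sqrt N$. The diagonal contribution is then as large as $N\cdot N^4\|\varphi\|_2^2 \sim N^3\|g\|_2^2$, giving only an $\mathcal O(N^{-3/2})$ bound on the cross term, one half-power of $N$ short of the $\mathcal O(N^{-2})$ needed for Theorem~\ref{mainB}. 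This is precisely the obstacle the paper flags in the heuristic discussion around \eqref{gby} (``Chebychev's inequality ... $\mathcal O(1/N^{3/2})$ ... our actual proof ... $\mathcal O(1/N^{2})$'').

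The paper avoids this by \emph{not} applying Cauchy--Schwarz to $(W^{(\gamma)}-1)g$ against $h$. Instead it expands $g=\sum_j\varphi(v_j)$ first, and the decisive observation is Lemma~\ref{ns1}: $P_kh=0$. This kills every summand with $j=k$, so the weight $w^{(\gamma)}(v_k)-1-\gamma(1-v_k^2)/(N-1)$ (a function of $v_k$ alone) only ever multiplies $\varphi(v_j)$ with $j\ne k$. The paper then replaces $h$ by $P_{\{j,k\}}h$ in each remaining summand and applies Cauchy--Schwarz on the two-variable slice; the weight and $\varphi$ live on different coordinates, so their product has a moment bounded by a fixed constant via the single-particle correlation operator $K$ (the $K\psi\le 60$ step). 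What remains is $\sum_{j\ne k}\|P_{\{j,k\}}h\|_2$, and that is exactly where Theorem~\ref{correl2} enters, producing the $\bigl[\tfrac{2}{N-1}+\tfrac{8N}{(N-2)(N-4)^2}\bigr]^{1/2}$ factor in $C_N$. Your account of $C_N$ is therefore also off: that $(N-2)(N-4)^2$ denominator comes from the two-particle correlation bound for $h$ (Theorem~\ref{correl2}), not from Lemma~\ref{speccomp} or from a fourth-moment computation on $g$; and the $\sqrt{15}$ comes from the pointwise bound on $K\bigl((1-v^2)^4\bigr)$, not from the eigenvalue $\alpha_6$. More fundamentally, your plan never uses $P_kh=0$ at all, so it cannot reconstruct the argument that actually delivers the $O(N^{-2})$ bound.
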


\begin{proof}
We rewrite \eqref{crt1} as
$$ 
\frac{2}{N} \sum_{k=1}^N\left[   \int_{S^{N-1}(\sqrt{N})} \left[w^{(\gamma)}(v_k)  - 1 - \gamma\frac{1-v_k^2}{N-1}\right]g h\dd \sigma \right]
$$
since 
${\displaystyle 
 \sum_{k=1}^N  \left[ 1 - \gamma\frac{1-v_k^2}{N-1}\right] = N}$, and $g$ and $h$ are orthogonal.  Introducing $g = \sum_{j=1}^N\varphi(v_j)$,
 the quantity becomes
 \begin{equation}\label{step}
 \frac{2}{N} \sum_{k=1}^N\sum_{j\neq k}\left[   \int_{S^{N-1}(\sqrt{N})} \left[w^{(\gamma)}(v_k)  - 1 - \gamma\frac{1-v_k^2}{N-1}\right]\varphi(v_j) h\dd \sigma \right]  
 \end{equation}
 where the term $j= k$ vanishes since $P_k h = 0$.  
 
 Next let  $P_{\{j,k\}}$ denote the orthogonal projection onto the subspace of functions depending only on $v_j$ and $v_k$. Evidently, we may replace $h$
  by $P_{\{j,k\}}h$ in each summand above. Then since
  $$\left[w^{(\gamma)}(v_k)  - 1 - \gamma\frac{1-v_k^2}{N-1}\right]^2 \leq \frac{(1-\gamma)^2}{(N-1)^4} (1-v_k^2)^4\ ,$$
the quantity in \eqref{step} is bounded in magnitude by   
 \begin{equation}\label{step2}
 (1-\gamma)\frac{1}{(N-1)^2}\frac{2}{N} \sum_{k=1}^N\sum_{j\neq k}\int_{S^{N-1}(\sqrt{N})} |1-v_k^2|^2|\varphi(v_j)| |P_{\{j,k\}} h|\dd \sigma 
 \end{equation}
 By the Schwarz inequality, and then the definition of the $K$ operator,  with $\psi(v)$ denoting the function $(1-v_k^2)^4$,
 $$\int_{S^{N-1}(\sqrt{N})} |1-v_k^2|^2|\varphi(v_j)| |P_{\{j,k\}} h|\dd \sigma  \leq 
 \left(\int_{S^{N-1}(\sqrt{N})} K\psi(v_j)|\varphi(v_j)|^2\right)^{1/2}\|P_{\{j,k\}} h\|_2\ .$$
 
 By the definition of the $K$ operator and $\psi$, $K\psi(v)$ is a convex function of $v^2$, and hence
 $$0 \leq K\psi(v) \leq\max\{ K\psi(0)\ ,\K\psi(\sqrt{N})\ .$$
 By direct computation
 \begin{multline}K\psi(v) = 1  - 4\frac{N-v^2}{N-1} + 18\frac{(N-v^2)^2}{(N-1)(N+1)}\\ 
 -60\frac{(N-v^2)^3}{(N-1)(N+1)(N+3)} + 105 \frac{(N-v^2)^4}{(N-1)(N+1)(N+3)(N+5)} \ .\nonumber\end{multline}
 Simple estimates show $K\psi(0) < K\psi(\sqrt{N}) \leq 60$
 for all $N$. Therefore, 
 $$ \left(\int_{S^{N-1}(\sqrt{N})} K\psi(v_j)|\varphi(v_j)|^2\right)^{1/2} \leq \sqrt{60}\|\varphi\|_2\|P_{\{j,k\}}h\|_2\ .$$
 Then, by symmetry, for each $j\neq k$, and  Theorem~\ref{correl2},
$$\|P_{\{j,k\}}h\|_2^2 = \langle h, P_{\{j,k\}}h\rangle = \ncht^{-1}\sum_{i< \ell} \langle h, P_{i,\ell}h\rangle \leq \left[\frac{2}{N-1} +\frac{8N}{(N-2)(N-4)^2}\right] \|h\|_2^2\ .$$ 
Further,  Lemma~\ref{gprop2} gives us
$$\|\varphi\|_2  \leq \left(1 - \frac{15}{ {(N+1)}(N+3)}\right)^{-1/2}\frac{\|g\|_2}{\sqrt{N}}\ .$$
Combining these yields the result. 
\end{proof}

  \begin{lm}\label{crtB} With  $h$ as above
 \begin{equation}\label{crt1B}
\int_{S^{N-1}(\sqrt{N})}W^{(\gamma)}( v) h^2(v)\dd \sigma  \geq \left(1 - \frac{1-\gamma}{N-1}\right)\|h\|_2^2\ .
 \end{equation}
 \end{lm}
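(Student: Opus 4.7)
The plan is to prove the bound by a pointwise argument on the weight function $W^{(\gamma)}$, without using any special structure of $h$ beyond measurability. The hypothesis that $P_k h = 0$ for every $k$ (established in Lemma~\ref{ns1}) will play no role here; it is used only in the combination with the other two terms of \eqref{main2} later on.

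First I would invoke Lemma~\ref{weight}, which gives the pointwise bound
\begin{equation*}
W^{(\gamma)}(v) \;\geq\; \left(\frac{N-1}{N}\right)^{1-\gamma}
\end{equation*}
valid for every $v\in S^{N-1}(\sqrt{N})$ and every $\gamma\in[0,1]$. This bound is sharp (attained at a configuration with all energy on one coordinate), so no refinement of it is available from geometric considerations.

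Next I would apply the elementary convexity estimate recorded in the remark following Lemma~\ref{weight}, equation~\eqref{squeeze}, namely
\begin{equation*}
\left(\frac{N-1}{N}\right)^{1-\gamma} \;\geq\; 1 - \frac{1-\gamma}{N-1}.
\end{equation*}
Chaining these two inequalities yields the pointwise estimate $W^{(\gamma)}(v) \geq 1 - (1-\gamma)/(N-1)$ on the energy sphere.

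Finally, since $h^2\geq 0$, I would multiply both sides of this pointwise inequality by $h^2(v)$ and integrate against $d\sigma$, obtaining
\begin{equation*}
\int_{S^{N-1}(\sqrt{N})} W^{(\gamma)}(v)\, h^2(v)\,\dd\sigma
\;\geq\; \left(1 - \frac{1-\gamma}{N-1}\right)\int_{S^{N-1}(\sqrt{N})} h^2\,\dd\sigma
\;=\; \left(1 - \frac{1-\gamma}{N-1}\right)\|h\|_2^2,
\end{equation*}
which is the claimed bound. There is no genuine obstacle here: the lemma is essentially a direct rewrite of the pointwise lower bound from Lemma~\ref{weight} combined with \eqref{squeeze}, and all the nontrivial analytic work has already been carried out in proving Lemma~\ref{weight}.
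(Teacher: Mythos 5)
Your proof is correct and follows exactly the route the paper indicates: the paper's entire proof of this lemma is the single line ``Simply use \eqref{squeeze},'' which implicitly invokes the pointwise lower bound from Lemma~\ref{weight} together with the left-hand inequality of \eqref{squeeze} and then integrates against $h^2\,\dd\sigma$. You have merely spelled out the same chain of inequalities in full.
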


\begin{proof} Simply use (\ref{squeeze}).\end{proof}

We come to:

\begin{proof}[Proof of Theorem~\ref{mainB}] From (\ref{main2}), Theorem~\ref{thm1}, Lemma~\ref{crt} and Lemma~\ref{crtB},
\begin{equation}\label{grow}
\frac{N}{N-1 }\mathcal{F}_N(f,f) \geq \left(1 - \frac{A_N}{N^2}\right)\|g\|_2^2 - \frac{C_N}{N^2}2\|g\|_2\|h\|_2 + \left(1+ \frac{\gamma N -1}{(N-1)^2}\right)\|h\|_2^2\ . 
\end{equation}
Then using $2\|g\|_2\|h\|_2 \leq \|f\|_2^2$,  we obtain
\begin{equation}\label{grow2}
\frac{N}{N-1 }\mathcal{F}_N(f,f) \geq \left(1 - \frac{A_N+ C_N}{N^2}\right)\|f\|_2^2\ ,
\end{equation}
from which the result follows. 
\end{proof} 

Since  $\lim_{N\to \infty}A_N =: A$ and $\lim_{N\to \infty}C_N =: C$ exist, for all $0\leq \gamma \leq 1$, there exists an $N_0$ so that  $N^2 > A_N + C_N$ for all $N \geq N_0$, and 
$$\prod_{j \geq N_0}^\infty \left(1 - \frac{A_N+ C_N}{N^2}\right) > 0\ .$$
By Theorem~\ref{uniform}
$$\Delta_{N_0} \geq 4 N^{\gamma -1} \left(\prod_{j=3}^{N_0}  \left[1 -   \frac{4j+1}{(j-1)^2(j+1)}\right]\right)>0 \ .$$
Altogether, we have
$$\liminf_{N\to\infty}\Delta_N \geq  4 N^{\gamma -1} \left(\prod_{j=3}^{N_0} 
 \left[1 -   \frac{4j+1}{(j-1)^2(j+1)}\right]\right) \prod_{j \geq N_0}^\infty \left(1 - \frac{A_N+ C_N}{N^2}\right) > 0\ ,$$
 which proves the Kac conjecture for $0 < \gamma < 1$. 
 
 \subsection{The structure of the gap eigenfunction}
 
  The exact computation of $\Delta_N$ for $\gamma =0$ \cite{CCL00} shows that in this case, $\Delta_N < \Delta_{N-1}$ for all $N$. 
 It seems quite plausible that for {\em all}  $\gamma$, $\Delta_N$ is monotone decreasing in $N$, but we have not been able to show this. 
 All of our work so far in this paper has focused on lower bounds, for the obvious reasons. 
 
 Nonetheless,  the conjectured monotonicity of $\Delta_N$ would have a significant consequence. Fix $\gamma > 0$. For any $N\geq 3$, let $f_N$
 be a normalized  eigenfunction of $L_N$ with $L_N f_N = -\Delta_N f_N$.  Let
 $f_N = g_N + h_N$ be the trial function decomposition of $f_N$.  
 Define $\alpha_N$ by
 $$\alpha_N := \|h_N\|^2\ .$$
In passing from (\ref{grow}) to (\ref{grow2}) we have simply discarded the term ${\displaystyle  \frac{\gamma N -1}{(N-1)^2}\|h\|^2}$. Taking $f = f_N$ and keeping this term, we deduce
$$\Delta_N \geq   \left(1 - \frac{A_N+ C_N}{N^2}  + \alpha_N \frac{\gamma N -1}{(N-1)^2}\right)\Delta_{N-1}\ .$$
If we then knew that $\Delta_N \leq \Delta_{N-1}$ for all $N$, we could conclude that
$$
\alpha_N \leq \frac{A_N+C_N}{\gamma N}$$
for all $N$, and since $\lim_{N\to\infty}A_N = A$, and   $\lim_{N\to\infty}C_N = C$  exist and are finite, this would mean that $\alpha_N = \mathcal{O}(1/N)$. 
Instead, we can prove:

\begin{prop} There is an infinite sequence of integers $\{N_k\}$ such that for each $k$, $\alpha_{N_k} \leq 1/\log(N_k)$. 
\end{prop}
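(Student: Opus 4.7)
The plan is to exploit the refined bound displayed just before the proposition together with a uniform upper bound on $\Delta_N$, via a telescoping and contradiction argument. Writing $\beta_N := \alpha_N(\gamma N-1)/(N-1)^2$ and $\epsilon_N := (A_N+C_N)/N^2$, the bound reads $\Delta_N \geq \Delta_{N-1}(1 + \beta_N - \epsilon_N)$. Since $\alpha_N = \|h_N\|_2^2 \leq \|f_N\|_2^2 = 1$, we have $\beta_N = O(1/N)$, while $\epsilon_N = O(1/N^2)$ is manifestly summable. For $N$ large enough that $|\beta_N - \epsilon_N| \leq 1/2$, the elementary inequality $\log(1+x) \geq x - x^2$ gives $\log(\Delta_N/\Delta_{N-1}) \geq \beta_N - \epsilon_N - 2(\beta_N^2 + \epsilon_N^2) = \beta_N + O(1/N^2)$. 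Telescoping from $N=3$ to $M$ then yields
$$
\sum_{N=3}^M \beta_N \;\leq\; \log\!\left(\frac{\Delta_M}{\Delta_2}\right) + K
$$
for a finite constant $K$ depending only on $\gamma$, absorbing the summable error tail.

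The only non-routine input, and the step I expect to be the main obstacle, is a uniform-in-$N$ upper bound $\Delta_N \leq \Delta_{\max}$. My approach would be to insert the symmetric trial function $f_0 = \sum_{j=1}^N(v_j^4 - 3N/(N+2))$ from \eqref{gapfu}, which is already orthogonal to the constants. The trigonometric identity
$$
[v_i^4 + v_j^4]^{(i,j)} \;=\; \tfrac{3}{4}(v_i^2+v_j^2)^2
$$
gives $f_0 - [f_0]^{(i,j)} = \tfrac{1}{4}(v_i^2-v_j^2)^2$, and so
$$
\mathcal{E}_N(f_0,f_0) \;=\; \frac{N}{16}\int_{S^{N-1}(\sqrt{N})} (v_1^2+v_2^2)^\gamma (v_1^2-v_2^2)^4\,\dd\sigma,
$$
which is $O(N)$ uniformly in $N$ by standard moment bounds on the sphere (equivalence of ensembles, or simply the $K$-operator computations of Section 3). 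A parallel calculation shows $\|f_0\|_2^2 = \Theta(N)$: the diagonal part contributes $N\cdot\mathrm{Var}(v_1^4) \to 96N$, while the off-diagonal correlations contribute only $N(N-1)\cdot O(1/N) = O(N)$. Hence $\Delta_N \leq \mathcal{E}_N(f_0,f_0)/\|f_0\|_2^2 \leq \Delta_{\max} < \infty$.

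Combining these, $\sum_{N=3}^\infty \beta_N \leq \log(\Delta_{\max}/\Delta_2) + K < \infty$. Suppose for contradiction that $\alpha_N > 1/\log N$ for every $N$ beyond some $N_1$. Since $(\gamma N - 1)/(N-1)^2 \geq \gamma/(2N)$ once $N$ is large (an immediate quadratic inequality for $\gamma > 0$), this forces $\beta_N \geq \gamma/(2N\log N)$, and hence
$$
\sum_{N\geq N_1}\beta_N \;\geq\; \frac{\gamma}{2}\sum_{N\geq N_1}\frac{1}{N\log N} \;=\; +\infty,
$$
contradicting the finite bound just established. Therefore $\alpha_N \leq 1/\log N$ must hold for infinitely many $N$, yielding the required subsequence $\{N_k\}$.
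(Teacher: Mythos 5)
Your proof is correct in structure and follows the same contradiction template as the paper: assume $\alpha_N > 1/\log N$ for all large $N$, show that the sum $\sum_N\bigl[\alpha_N(\gamma N - 1)/(N-1)^2 - (A_N+C_N)/N^2\bigr]$ diverges, conclude that $\Delta_N \to \infty$, and contradict a uniform upper bound on $\Delta_N$. Where you differ from the paper is in completeness. You spell out the telescoping $\log$ estimate that turns ``divergence of the sum'' into ``$\Delta_N$ unbounded,'' which the paper asserts without detail. And you supply your own explicit upper bound $\Delta_N = O(1)$ via the trial function $f_0 = \sum_j\bigl(v_j^4 - 3N/(N+2)\bigr)$ from \eqref{gapfu}, whereas the paper appeals to ``the trial function calculations in the next section'' --- i.e.\ to the Section~5 construction whose Dirichlet form converges to $\langle\varphi,\mathcal{L}\varphi\rangle$. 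Your version is more self-contained, at the price of needing to verify the two order-of-magnitude claims $\mathcal{E}_N(f_0,f_0)=O(N)$ and $\|f_0\|_2^2=\Theta(N)$, which you do correctly.

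One algebraic slip to correct. The averaging identity $[v_i^4 + v_j^4]^{(i,j)} = \tfrac34(v_i^2+v_j^2)^2$ is right, but subtracting gives
$$
f_0 - [f_0]^{(i,j)} \;=\; \tfrac14 v_i^4 + \tfrac14 v_j^4 - \tfrac32 v_i^2 v_j^2 \;=\; \tfrac14(v_i^2-v_j^2)^2 - v_i^2 v_j^2\,,
$$
not $\tfrac14(v_i^2-v_j^2)^2$. (A quick sanity check: for $\gamma=0$, summing the corrected expression over $i<j$ and using $\sum_i v_i^2 = N$ on $S^{N-1}(\sqrt N)$ gives $\tfrac{N+2}{4}f_0$, so the generator acting on $f_0$ yields the eigenvalue \eqref{gapva}, as it must.) The corrected quantity is still a fixed polynomial in $(v_1,v_2)$, whose joint moments under $\dd\sigma$ on $S^{N-1}(\sqrt N)$ stabilize as $N\to\infty$, so the estimates $\mathcal{E}_N(f_0,f_0)=O(N)$ and $\|f_0\|_2^2=\Theta(N)$ survive and the rest of your argument goes through unchanged.
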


\begin{proof} If this were not the case, then 
$$\sum_{N=3}^\infty \left[ - \frac{A_N+ C_N}{N^2}  + \alpha_N \frac{\gamma N -1}{(N-1)^2}\right] $$
would diverge to $+\infty$, and this would imply that $\Delta_N$ would increase without bound. However, the trial function calculations
in the next section show that this is not the case. 
\end{proof} 

The next bound shows that if one could strengthen this to $\alpha_N = o(1/N^\gamma)$ for all large $N$, one would conclude that 
$$\lim_{N\to\infty}(\widehat \Delta_N - \Delta_N) = 0\ .$$

\begin{prop} Suppose that $\alpha_N = o(1/N^\gamma)$. 
There is a computable constant $C$ so that  for all sufficiently large $N$,
$$ \Delta_N \geq   \widehat{\Delta}_N(1-\alpha_N) - C\sqrt{N^\gamma\alpha_N}\ .$$
\end{prop}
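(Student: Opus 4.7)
The plan is to build on the decomposition $f_N = g_N + h_N$ introduced in this section together with Cauchy--Schwarz applied to the Dirichlet form. Recall that $\|g_N\|^2 = 1-\alpha_N$, $\|h_N\|^2 = \alpha_N$, $g_N \in \mathcal{A}_N$, and by Lemma~\ref{ns1} one has $P_k h_N = 0$ for every $k$. Expanding $\Delta_N = \mathcal{E}_N(f_N, f_N) = \mathcal{E}_N(g_N, g_N) + 2\mathcal{E}_N(g_N, h_N) + \mathcal{E}_N(h_N, h_N)$, discarding $\mathcal{E}_N(h_N, h_N) \geq 0$, and applying Cauchy--Schwarz to the cross term yields
\begin{equation*}
\Delta_N \geq \mathcal{E}_N(g_N, g_N) - 2\sqrt{\mathcal{E}_N(g_N, g_N)\,\mathcal{E}_N(h_N, h_N)}\ .
\end{equation*}
The function $t \mapsto t - 2\sqrt{tK}$ is monotone increasing on $t \geq K$, so provided $\mathcal{E}_N(g_N, g_N) \geq \mathcal{E}_N(h_N, h_N)$ one may substitute the variational lower bound $\mathcal{E}_N(g_N, g_N) \geq \widehat{\Delta}_N(1-\alpha_N)$ coming from the admissibility of $g_N$ in the definition of $\widehat{\Delta}_N$, to obtain
\begin{equation*}
\Delta_N \geq \widehat{\Delta}_N(1-\alpha_N) - 2\sqrt{\widehat{\Delta}_N(1-\alpha_N)\,\mathcal{E}_N(h_N, h_N)}\ .
\end{equation*}

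Since $\widehat{\Delta}_N$ is bounded uniformly in $N$ (by Theorem~\ref{apl}, $\limsup_N \widehat{\Delta}_N \leq \Lambda < \infty$, or simply by evaluating the polynomial trial function of Section~\ref{intro}), say $\widehat{\Delta}_N \leq C_2$, the proof reduces to establishing a uniform estimate
\begin{equation*}
\mathcal{E}_N(h_N, h_N) \leq C_1 N^\gamma \alpha_N
\end{equation*}
with $C_1$ independent of $N$; the conclusion then follows with $C = 2\sqrt{C_1 C_2}$. The hypothesis $\alpha_N = o(1/N^\gamma)$ ensures $\mathcal{E}_N(h_N, h_N) = o(1)$, and since $\widehat\Delta_N(1-\alpha_N)$ is bounded away from zero for large $N$ by Theorem~\ref{mainB}, the monotonicity hypothesis $\mathcal{E}_N(g_N, g_N) \geq \mathcal{E}_N(h_N, h_N)$ used above is automatically satisfied for all sufficiently large $N$.

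To prove the bound on $\mathcal{E}_N(h_N, h_N)$, I would use the pointwise estimate $(v_i^2+v_j^2)^\gamma \leq N^\gamma$ on $S^{N-1}(\sqrt{N})$ to reduce to the Maxwellian case,
\begin{equation*}
\mathcal{E}_N(h_N, h_N) \leq N^\gamma \mathcal{E}_N^{(0)}(h_N, h_N)\ ,
\end{equation*}
where $\mathcal{E}_N^{(0)}$ denotes the $\gamma=0$ Dirichlet form. What remains is the purely Maxwellian assertion that for every symmetric $h$ in the common nullspace of all the operators $P_k$, one has $\mathcal{E}_N^{(0)}(h,h) \leq C_1\|h\|^2$ with $C_1$ independent of $N$. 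Such $h$ has no one-body component in its Hoeffding decomposition, so it lies in the orthogonal complement of $\mathcal{A}_N$ and of the constants inside the symmetric sector; the explicit spectral decomposition of $L_N^{(0)}$ obtained in \cite{CCL00} then shows that the eigenvalues of $-L_N^{(0)}$ on this ``pair-and-higher'' subspace are uniformly bounded in $N$, delivering the required constant $C_1$.

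The main obstacle is converting the qualitative spectral bound of the preceding paragraph into an explicit value of $C_1$: while it is conceptually clear that the norm of $-L_N^{(0)}$ restricted to the no-one-body-part subspace is $O(1)$, extracting a concrete bound requires navigating the Schur-type eigenvalue list of \cite{CCL00} carefully. By comparison, the variational and Cauchy--Schwarz steps are entirely routine, and once $C_1$ and $C_2$ are made numerical, the final combination $C = 2\sqrt{C_1 C_2}$ is explicit.
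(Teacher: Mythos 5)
Your scaffolding --- decompose $f_N = g_N + h_N$, expand the Dirichlet form, discard $\mathcal{E}_N(h_N,h_N)\ge 0$, control the cross term by Cauchy--Schwarz --- matches the paper up to the point where you estimate the cross term. There the two proofs diverge, and the divergence is fatal for yours. Applying Cauchy--Schwarz for the positive semidefinite form $\mathcal{E}_N$ directly gives
$$|\mathcal{E}_N(g_N,h_N)|\le\sqrt{\mathcal{E}_N(g_N,g_N)\,\mathcal{E}_N(h_N,h_N)}\ ,$$
which leaves you needing $\mathcal{E}_N(h_N,h_N)\le C_1 N^\gamma\alpha_N$, and you try to reduce this, via $(v_i^2+v_j^2)^\gamma\le N^\gamma$, to the Maxwellian claim that $\mathcal{E}_N^{(0)}(h,h)\le C_1\|h\|_2^2$ with $C_1$ independent of $N$ for every symmetric $h$ satisfying $P_kh=0$ for all $k$. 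That claim is not merely hard to make quantitative: it is false. Take $h(v)=\prod_{k=1}^N v_k$: it is symmetric, square integrable on $S^{N-1}(\sqrt{N})$, orthogonal to the constants, and $P_k h = 0$ for every $k$, since projecting onto functions of $v_k$ alone annihilates the odd factors $v_j$, $j\neq k$. Moreover $[h]^{(i,j)}=0$ for every pair, because
$$\frac{1}{2\pi}\int_{-\pi}^{\pi}(v_i\cos\theta + v_j\sin\theta)(-v_i\sin\theta + v_j\cos\theta)\,\dd\theta=0\ .$$
Consequently $\mathcal{E}_N^{(0)}(h,h)= N\,\binom{N}{2}^{-1}\sum_{i<j}\|h\|_2^2 = N\|h\|_2^2$, so the norm of $-L_N^{(0)}$ on the no-one-body-part symmetric sector grows linearly in $N$. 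The intuition that ``pair-and-higher'' modes of the Kac generator are $O(1)$ does not survive contact with this example.

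The paper never needs to bound $\mathcal{E}_N(h_N,h_N)$ at all. It exploits the structure of $L_Ng_N$: since $g_N\in\mathcal{A}_N$, one writes $L_Ng_N = N\binom{N}{2}^{-1}\sum_{i<j}g_{i,j}$ with each $g_{i,j}$ depending only on $(v_i,v_j)$, so that $\langle g_{i,j},h_N\rangle = \langle g_{i,j},P_{\{i,j\}}h_N\rangle$, and Cauchy--Schwarz is applied to the resulting sum of pair pairings. On the $g$-side this yields $\binom{N}{2}^{-1}\sum_{i<j}\|g_{i,j}\|^2\le N^{\gamma-1}\mathcal{E}_N(g_N,g_N)$, and on the $h$-side the two-particle correlation estimate of Theorem~\ref{correl2} gives $\binom{N}{2}^{-1}\sum_{i<j}\|P_{\{i,j\}}h_N\|^2\le (C/N)\|h_N\|_2^2$. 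The crucial smallness thus comes not from any boundedness of the Dirichlet form on the $h$-sector, but from the fact that $h_N$ has no one-body part, which forces its \emph{averaged pair projections} to be of order $\|h_N\|_2^2/N$; that is exactly what Theorem~\ref{correl2} quantifies, and it is what produces the $\sqrt{N^\gamma\alpha_N}$ in the final estimate. If you replace your spectral claim with Theorem~\ref{correl2} and re-route the Cauchy--Schwarz through the pair decomposition of $L_Ng_N$, your argument becomes the paper's proof.
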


\begin{proof}  Let $f_N$  be a normalized gap eigenfunction for $L_N$, and let $f_N = g_N + h_N$ be its decomposition as above. 
Then
$$\Delta_N = \mathcal{E}_N(f_N,f_N) =   \mathcal{E}_N(g_N,g_N)  +2  \mathcal{E}_N(g_N,h_N) + \mathcal{E}_N(h_N,h_N)\ .$$
We note that the cross term cannot be positive since otherwise replacing $f_N$ by $g_N - h_N$ would  lower the value 
of   $\mathcal{E}_N(f_N,f_N)$ while respecting the constraints. We shall discard the term $ \mathcal{E}_N(h_N,h_N)$, and estimate the cross
term from below. To do this, write $g(v) = 
\sum_{j=1}^N\varphi(v_j)$ as above, and define
$$
g_{i,j} = (v_i^2+v_j^2)^\gamma\frac{1}{2\pi}\int_{-\pi}^\pi \left[\varphi(v_i\cos\theta + v_j\sin\theta) +  \varphi(-v_i\sin\theta - v_j\cos\theta) - \varphi(v_i) - \varphi(v_j)\right]{\rm d}\theta\ ,$$
so that
$$L_N g_N = N \ncht^{-1} \sum_{i<j} g_{i,j}\ .$$

We have
\begin{eqnarray}  -\mathcal{E}_N(g_N,h_N)  &=& \langle L_N g_N, h_N\rangle\nonumber\\
&=&    N \ncht^{-1} \sum_{i<j} \langle g_{i,j} , h_N\rangle\nonumber\\
&=&    N \ncht^{-1} \sum_{i<j} \langle g_{i,j} , P_{\{i,j\}}h_N\rangle\nonumber\\
&\leq& N\left( \ncht^{-1} \sum_{i<j} \|g_{i,j}\|^2  \right) ^{1/2}  \left( \ncht^{-1} \sum_{i<j} \|P_{\{i,j\}}h_N\|^2  \right) ^{1/2} \nonumber
\end{eqnarray}
By Theorem~\ref{correl2}, 
$$ \ncht^{-1} \sum_{i<j} \|P_{\{i,j\}}\|^2  \leq \frac{C}{N} \|h_N\|^2 =  \frac{C}{N}\alpha_N\ .$$
Also, using the bound $(v_i^2+v_j^2)^\gamma \leq N^\gamma$, 
$$N  \ncht^{-1} \sum_{i<j} \|g_{i,j}\|^2  \leq  N^\gamma\mathcal{E}_N(g_N,g_N)\ .$$
Altogether, we have
$$\Delta_N \geq   \mathcal{E}_N(g_N,g_N) - 2\sqrt{ CN^\gamma \alpha_N}\sqrt{\mathcal{E}_N(g_N,g_N)}\ .$$
Now, $\mathcal{E}_N(g_N,g_N) \geq \widehat\Delta_N\|g_N\|^2 =  \widehat\Delta_N(1-\alpha_N)$. Hence for all sufficiently large
$N$, the difference above decreases if we replace  $\mathcal{E}_N(g_N,g_N)$ by $\widehat\Delta_N(1-\alpha_N)$.
Renaming the constant, this yields the desired bound.
\end{proof}

\section{Spectral gap for the linearized collision operator}

Our main goal in this section is to prove Theorem~\ref{apl}, which says that for  $0 \leq \gamma < 1$,   
$$\Lambda \geq \limsup_{N\to\infty}\widehat \Delta_N\ ,$$
where $\Lambda$ is the spectral gap for the linearized Kac-Boltzmann equation.  

In the proof, we shall use 
several lemmas. Let $\rho_N(v)$ be the probability density on $\R$ defined by
$$\rho_N(v)\dd v = \dd \nu_{M,1}\ .$$
That is,  $\rho_N$ is the density of the one dimensional marginal of the uniform probability measure on $S^{N-1}(\sqrt{N})$.  Writing (\ref{change2}) out explicitly, one has
\begin{equation}\label{lin5}
\rho_N(v)\dd v = K_N\left(1 - \frac{v^2}{N}\right)_+^{(N-3)/2}\qquad{\rm where}\qquad 
K_N = \frac{1}{\sqrt{N\pi}}\frac{\Gamma(N/2)}{\Gamma((N-1)/2)} \ .
\end{equation}
By Stirlings formula,
\begin{equation}\label{lin6}
K_N = \frac{1}{\sqrt{2\pi}}\left(1 + {\mathcal O}\left(\frac{1}{N}\right)\right)\ .
\end{equation}

\begin{lm}\label{llin1} There is a constant $C$ independent of $N$ such that
\begin{equation}\label{lin7}
\rho_N(v) \leq CM(v)\quad{\rm for\ all}\quad v\in \R\ ,
\end{equation}
where $M(v)$ is given by \eqref{max2}.
\end{lm}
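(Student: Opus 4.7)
The plan is to prove the bound by comparing the densities pointwise and exploiting the elementary inequality $\ln(1-x)\le -x$ on $[0,1)$. Outside $[-\sqrt{N},\sqrt{N}]$ the inequality is vacuous since $\rho_N$ vanishes there, so the entire content lies in the interval $|v|<\sqrt{N}$.

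First I would form the ratio
\begin{equation*}
\frac{\rho_N(v)}{M(v)} \;=\; K_N\sqrt{2\pi}\,\Bigl(1-\tfrac{v^2}{N}\Bigr)^{(N-3)/2} e^{v^2/2}.
\end{equation*}
Taking a logarithm of the $v$-dependent factor and applying $\ln(1-x)\le -x$ for $x\in[0,1)$ with $x=v^2/N$ gives
\begin{equation*}
\tfrac{N-3}{2}\ln\!\Bigl(1-\tfrac{v^2}{N}\Bigr)+\tfrac{v^2}{2}\;\le\;-\tfrac{N-3}{2N}v^2+\tfrac{v^2}{2}\;=\;\tfrac{3v^2}{2N}.
\end{equation*}
Since we are in the regime $v^2<N$, this exponent is bounded above by $3/2$, uniformly in $N$ and $v$. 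Exponentiating,
\begin{equation*}
\frac{\rho_N(v)}{M(v)}\;\le\;K_N\sqrt{2\pi}\,e^{3/2}.
\end{equation*}

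Finally I would invoke \eqref{lin6}, which ensures $K_N\sqrt{2\pi}=1+\mathcal{O}(1/N)$ and in particular is bounded by some absolute constant, say $2$, for all $N\ge 2$. Combining the two bounds, one obtains $\rho_N(v)\le C M(v)$ with $C:=2e^{3/2}$ (or any convenient constant), valid for every $v\in\mathbb{R}$ and every $N$. There is no genuine obstacle here: the only point requiring a moment's attention is that the factor $(N-3)/2$ is multiplied against a logarithm of $1-v^2/N$ that can become unboundedly negative near $|v|=\sqrt{N}$, but the exponential in $M$ cannot compensate this, so the inequality $\ln(1-x)\le -x$ is exactly the right tool to extract a term $-v^2/2$ that cancels cleanly with the Gaussian, leaving only the harmless remainder $3v^2/(2N)\le 3/2$.
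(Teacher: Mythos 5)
Your proof is correct and is essentially identical to the paper's: the paper's ``elementary estimate'' $e^{-v^2}\ge(1-v^2/N)_+^N$ is precisely the exponentiated form of your $\ln(1-x)\le -x$, and both arguments then bound the resulting exponent $3v^2/(2N)$ by $3/2$ on the support of $\rho_N$ and absorb $K_N\sqrt{2\pi}$ via \eqref{lin6}.
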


\begin{proof}
Using the elementary estimate ${\displaystyle e^{-v^2} \geq \left(1 - \frac{v^2}{N}\right)_+^N}$,
valid for all $v$, 
we deduce
\begin{equation}\label{lin8}
\rho_N(v) \leq K_N e^{-v^2/2} e^{3v^2/2N} = K_N \sqrt{2\pi} e^{3v^2/2N} M(v) \ .
\end{equation}
Then since $v^2\leq N$ for all $v$ in  the support of $\rho_N$ , we have
$$\rho_N(v) \leq K_N \sqrt{2\pi} e^{3/2} M(v)\ .$$
The claim now follows from (\ref{lin6}). In fact, we see that for all sufficiently large $N$,
the constant $C = e^2$ would suffice. 
\end{proof}

In particular, it follows from Lemma~\ref{llin1} that $\rho_N(v)/M(v)$ is uniformly bounded on $\R$.
We now show that this ratio converges to $1$ uniformly on bounded intervals as $N$ tends to infinity.

\begin{lm}\label{llin2} For all $R>0$, 
\begin{equation}\label{lin9}
\lim_{N\to\infty}\left(\sup_{-R\leq v\leq R}\left|\frac{\rho_N(v)}{M(v)} -1\right|\right) = 0\ .
\end{equation}
\end{lm}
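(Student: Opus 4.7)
The plan is to prove the lemma by taking logarithms of the ratio $\rho_N(v)/M(v)$ and showing that this log tends to zero uniformly on $[-R,R]$, via a straightforward Taylor expansion of $\log(1-x)$. The only ingredients needed are the explicit formulas \eqref{lin5}, \eqref{max2}, and the Stirling estimate \eqref{lin6}.

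First I would fix $R>0$ and restrict attention to $N$ so large that $R^2/N < 1/2$, so that for every $v\in[-R,R]$ the factor $(1-v^2/N)$ lies in $[1/2,1]$ and logarithms are well-behaved. Writing
\[
\frac{\rho_N(v)}{M(v)} \;=\; \bigl(K_N\sqrt{2\pi}\bigr)\,\exp\!\Bigl[\tfrac{N-3}{2}\log\!\bigl(1-\tfrac{v^2}{N}\bigr) + \tfrac{v^2}{2}\Bigr],
\]
the prefactor tends to $1$ by \eqref{lin6}, uniformly in $v$. So it suffices to show the exponent tends to $0$ uniformly in $v\in[-R,R]$.

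For the exponent, I would use the Taylor estimate $\bigl|\log(1-x) + x\bigr| \le x^2$ for $0\le x \le 1/2$ applied to $x=v^2/N$. This yields
\[
\left|\frac{N-3}{2}\log\!\Bigl(1-\frac{v^2}{N}\Bigr) + \frac{(N-3)v^2}{2N}\right| \;\le\; \frac{N-3}{2}\cdot\frac{v^4}{N^2} \;\le\; \frac{R^4}{2N},
\]
and combining with $\dfrac{v^2}{2} - \dfrac{(N-3)v^2}{2N} = \dfrac{3v^2}{2N}$, we get
\[
\sup_{-R\le v\le R}\left|\tfrac{N-3}{2}\log\!\bigl(1-\tfrac{v^2}{N}\bigr) + \tfrac{v^2}{2}\right|
\;\le\; \frac{3R^2}{2N} + \frac{R^4}{2N} \;\longrightarrow\; 0.
\]
Exponentiating and multiplying by the prefactor $K_N\sqrt{2\pi}\to 1$ then gives \eqref{lin9}.

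I do not anticipate any serious obstacle. The only point requiring a touch of care is that the convergence needs to be \emph{uniform} on $[-R,R]$ rather than pointwise, but this is handled automatically by using $R$ as an upper bound on $|v|$ in every error estimate, so all remainders are $O(1/N)$ with constants depending only on $R$. Everything else is a mechanical calculation from the definitions.
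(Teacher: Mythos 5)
Your proof is correct and follows essentially the same route as the paper: both arguments reduce to comparing $(1-v^2/N)^{(N-3)/2}$ with $e^{-v^2/2}$ plus the Stirling estimate \eqref{lin6} for the prefactor. You package the comparison as a single two-sided Taylor bound for $\log(1-x)$, whereas the paper uses a pair of one-sided exponential inequalities (one for each direction), but the content is the same.
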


\begin{proof} From (\ref{lin8}), one has the upper bound
$$\frac{\rho_N(v)}{M(v)} -1 \leq   K_N \sqrt{2\pi} e^{3R^2/2N}  - 1$$
for all $v \in [-R,R]$. By (\ref{lin6}), the right hand side is ${\mathcal O}(1/N)$.

For the lower bound, we start from the elementary estimate
\begin{equation}\label{lin10}
 \left(1 - \frac{v^2}{N}\right)^N \geq  e^{-v^2 - v^4/N}\qquad{\rm whenever}\qquad
v^2 \leq \frac{N}{2}\ ,
\end{equation}
Thus,
$$\frac{\rho_N(v)}{M(v)} -1 \geq  K_N \sqrt{2\pi} e^{-R^4/2N}  - 1\ .$$
Once again, by  (\ref{lin6}), the right hand side is ${\mathcal O}(1/N)$. 
\end{proof}

Notice the proof of Lemma~\ref{llin2} gives a little more than is stated: One could let
$R$ grow with $N$ like $N^\alpha$ for any $\alpha < 1/4$.

To apply these lemmas, let $\varphi$ be any unit vector in $L^2(\R,M(v)\dd v)$ that is orthogonal to both
$1$ and $v^2$. Define $a_N$, $b_N$ and $\varphi_N$ by
$$a_N = \int_{\R}\varphi(v)\rho_N(v)\dd v\qquad b_N = \left(\int_{\R}(v^2-1)^2\rho_N(v)\dd v\right)^{-1/2}
\int_{\R}\varphi(v)(v^2-1)\rho_N(v)\dd v\ ,$$
and
$$\varphi_N(v) =  \varphi(v) - a_N - b_N\left(\int_{\R}(v^2-1)^2\rho_N(v)\dd v\right)^{-1/2}
(1-v^2)\ .$$
Notice that since $1$ and $v_1^2 -1$ are orthogonal with respect to the uniform probability measure
on $S^{N-1}(\sqrt{N})$, $1$ and $v^2-1$ are orthogonal with respect to $\dd \nu_{N,1}(v) = \rho_N(v)\dd v$.  Thus, for each $j$, $\varphi_N(v_j)$ is ortogonal to both $1$ and $v_j^2$ with
respect to the uniform probability measure
on $S^{N-1}(\sqrt{N})$.  

The next lemma concerns the trial function that we shall use to prove Theorem~\ref{apl}

\begin{lm}\label{llin3} Define the function $f_N( v)$ by
\begin{equation}\label{lin18}
f_N( v) = \frac{1}{\sqrt{N}}\sum_{j=1}^N  \varphi_N(v_j) \ .
\end{equation}
Then for each $N$, $f \in L^2(S^{N-1}(\sqrt{N})$ and is orthogonal to the constants. Moreover,
\begin{equation}\label{lin19}
\lim_{N\to\infty} \int_{S^{N-1}(\sqrt{N})}f_N^2( v)\dd \sigma = 1\ .
\end{equation}
\end{lm}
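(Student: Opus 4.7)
My plan is to compute $\|f_N\|^2_{L^2(\sigma)}$ exactly and then take the limit of each piece separately. Two preliminary points are immediate: since $a_N$ is chosen so that $\int\varphi_N\,\rho_N\,\dd v = 0$, each $\varphi_N(v_j)$ is mean-zero under $\dd\sigma$, so $f_N$ is orthogonal to the constants; and because $\rho_N \le CM$ by Lemma~\ref{llin1}, $\varphi \in L^2(M\,\dd v)$ forces $\varphi_N \in L^2(\rho_N\,\dd v)$, hence $f_N \in L^2(S^{N-1}(\sqrt N))$.

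Expanding the square and using the defining relation \eqref{Koperator} of the pair-correlation operator $K$ on $L^2(\rho_N\,\dd v)$,
\begin{equation*}
\int_{S^{N-1}(\sqrt N)} f_N^2\,\dd\sigma \;=\; \|\varphi_N\|^2_{L^2(\rho_N)} \;+\; (N-1)\,\langle \varphi_N,\,K\varphi_N\rangle_{L^2(\rho_N)}.
\end{equation*}
Since $\int v^2\rho_N\,\dd v = 1$, the functions $1$ and $v^2-1$ are orthogonal in $L^2(\rho_N\,\dd v)$, and $\varphi_N$ is by construction the orthogonal projection of $\varphi$ onto the complement of their span, so
\begin{equation*}
\|\varphi_N\|^2_{L^2(\rho_N)} \;=\; \|\varphi\|^2_{L^2(\rho_N)} - a_N^2 - b_N^2.
\end{equation*}

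The correlation term is easily controlled: the functions $1$ and $v^2-1$ span the eigenspaces of $K$ corresponding to $\alpha_0 = 1$ and $\alpha_2 = -1/(N-1)$, and $K$ is self-adjoint on $L^2(\rho_N\,\dd v)$, so $\varphi_N$ is orthogonal to both eigenspaces. By the monotonicity \eqref{mono} and the explicit values \eqref{eigval}, $\|K\varphi_N\|_{L^2(\rho_N)} \le |\alpha_4|\,\|\varphi_N\|_{L^2(\rho_N)} = \frac{3}{(N-1)(N+1)}\|\varphi_N\|_{L^2(\rho_N)}$, and therefore
\begin{equation*}
(N-1)\,|\langle \varphi_N, K\varphi_N\rangle_{L^2(\rho_N)}| \;\le\; \frac{3}{N+1}\,\|\varphi_N\|^2_{L^2(\rho_N)} \;\longrightarrow\; 0.
\end{equation*}

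What remains is to show $\|\varphi\|^2_{L^2(\rho_N)} \to 1$ and $a_N, b_N \to 0$, and this is where the main work lies. Each is an integral of the form $\int h(v)\,\rho_N(v)\,\dd v$ for $h$ equal to $\varphi^2$, $\varphi$, or $\varphi\cdot(v^2-1)$, and the desired limit is the corresponding integral against $M(v)\,\dd v$, which equals $1$, $0$, $0$ respectively since $\varphi$ is a unit vector in $L^2(M\,\dd v)$ orthogonal to $1$ and $v^2$. To pass to the limit, I would split each integral at some $|v| = R$: on the inner region Lemma~\ref{llin2} gives $\rho_N/M \to 1$ uniformly, so the inner piece converges for each fixed $R$; on the outer region Lemma~\ref{llin1} yields $\rho_N \le CM$, and the $L^2(M)$-tails of $\varphi$ and $\varphi\cdot(v^2-1)$ (controlled by Cauchy--Schwarz against the Gaussian moments of the polynomial weight) can be made uniformly small by choosing $R$ large. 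The obstacle is precisely this interchange of limits, since $\varphi$ is not assumed to have compact support, and Lemmas~\ref{llin1} and~\ref{llin2} must be invoked together. Combining the three ingredients yields $\int f_N^2\,\dd\sigma \to 1$.
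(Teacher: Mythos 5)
Your argument is correct and follows essentially the same route as the paper's: identify the correlation term via the $K$ operator, show $\|\varphi_N\|^2_{L^2(\rho_N)}\to 1$ using $\rho_N\le CM$ and $\rho_N\to M$, and kill the cross term by the spectral bound for $K$ on the complement of $\mathrm{span}\{1,v^2\}$. The paper invokes Lemma~\ref{gprop2} for the sandwich bound (which packages the same $\alpha_4$/$\alpha_6$ estimate you derive inline) and uses Dominated Convergence directly where you use a truncation at $|v|=R$, but both are the same underlying limit interchange; you are in fact slightly more careful in flagging explicitly that $a_N\to 0$ and $b_N\to 0$ must also be verified, a point the paper leaves implicit.
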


\begin{proof} By Lemma~\ref{llin1},  there is a finite constant so that $\rho_N(v) \leq CM(v)$,
and thus
$$\int_{S^{N-1}(\sqrt{N})}
 \varphi^2(v_1)\dd \sigma = \int_\R \varphi^2(v)\rho_N(v)\dd v \leq C
  \int_\R \varphi^2(v)M(v)\dd v \leq C\ .$$
  This proves the square integrability, and then the orthogonality statement follows by construction. 
  
Next, by Lemma~\ref{gprop2}, 
\begin{equation}\label{lin20}
\left(1- \frac{15}{(N+1)(N+3)}\right)\int_{S^{N-1}(\sqrt{N})}\varphi_N^2(v_1)\dd \sigma \leq 
\int_{S^{N-1}(\sqrt{N})}f_N^2( v)\dd \sigma \leq
\left(1+ \frac{3}{N+1}\right)\int_{S^{N-1}(\sqrt{N})}\varphi_N^2(v_1)\dd \sigma\ .
\end{equation}

Also, by construction,
$$\int_{S^{N-1}(\sqrt{N})}\varphi_N^2(v_1)\dd \sigma  = 
\int_{\R}\varphi^2(v)\rho_N(v)\dd v - a_N^2 - b_N^2\ .$$
Again by Lemma~\ref{llin1},  $\rho_N(v) \leq CM(v)$ for all $v$, and then since
$\lim_{N\to\infty}\rho_N(v) = M(v)$ for all $v$, the Dominated Convergence Theorem yields
$$\lim_{N\to\infty}\int_{\R}\varphi^2(v)\rho_N(v)\dd v  = 
\int_{\R}\varphi^2(v)M(v)\dd v =1\ .$$
\end{proof}

\begin{proof}[Proof of Theorem~\ref{apl}]
By symmetry and direct computation,
$$\E(f_N,f_N) = N \int_{S^{N-1}(\sqrt{N})}(v_1^2+v_2^2)^\gamma  f_N( v)[ 
f_N( v) - f_N^{1,2}( v)]\dd \sigma\ ,$$
and also
$$f_N( v) - f_N^{1,2}( v) = \frac{1}{\sqrt{N}}\left(\varphi_N(v_1) +\varphi_N(v_2) - 
2\varphi_N^{(1,2)}(v_1,v_2)\right)$$
where
$$ \varphi_N^{(1,2)}(v_1,v_2)
\frac{1}{2\pi}\int_{-\pi}^\pi \varphi_N(\cos\theta v_1 + \sin\theta v_2)\dd\theta \ .$$

Thus,
\begin{eqnarray}
\E(f_N,f_N) &=& 
\int_{S^{N-1}(\sqrt{N})}(v_1^2+v_2^2)^\gamma  \left(\varphi_N(v_1) + \varphi_N(v_2)\right)
\left(\varphi_N(v_1) +\varphi_N(v_2) - 2\varphi_N^{(1,2)}(v_1,v_2))\right)
\dd \sigma\nonumber\\
&+&\sum_{j=3}^N \varphi_N(v_j)
\int_{S^{N-1}(\sqrt{N})}(v_1^2+v_2^2)^\gamma  
\left(\varphi_N(v_1) +\varphi_N(v_2) - 2\varphi_N^{(1,2)}(v_1,v_2)) \right)
\dd \sigma\ .
\end{eqnarray}

Using our results on the spectrum of $K_{(2)}$, the orthogonality properties of $\varphi_N$,
and the trivial bound  $(v_1^2+v_2^2)^\gamma \leq N^\gamma$, we have that
$$\left|\sum_{j=3}^N \varphi_N(v_j)
\int_{S^{N-1}(\sqrt{N})}(v_1^2+v_2^2)^\gamma  
\left(\varphi_N(v_1) +\varphi_N(v_2) - 2\varphi_N^{(1,2)}(v_1,v_2)) \right)
\dd \sigma\right| \leq \frac{C}{N^{1-\gamma}}\ .$$
Next, simple computations show that for each $N$, 
$$\left(\varphi_N(v_1) +\varphi_N(v_2) - 2\varphi_N^{(1,2)}(v_1,v_2))\right) = 
\left(\varphi(v_1) +\varphi(v_2) - 2\varphi^{(1,2)}(v_1,v_2))\right)\ ,$$
and then that the value of 
$$
\int_{S^{N-1}(\sqrt{N})}(v_1^2+v_2^2)^\gamma  \left(\varphi_N(v_1) + \varphi_N(v_2)\right)
\left(\varphi_N(v_1) +\varphi_N(v_2) - 2\varphi_N^{(1,2)}(v_1,v_2))\right)$$
is unchanged upon replacing $\varphi_N$ by $\varphi$. Finally, a simple dominated convergence argument based on the two dimension analog of $\rho_n(v) \leq CM(v)$ shows that
$$\lim_{N\to\infty} \E(f_N,f_N) = \langle \varphi, {\mathcal L}\varphi\rangle_{L^2(\R,M(v)\dd v)}\ .$$
Combining this with Lemma~\ref{llin3}, we have
$$
 \langle \varphi, {\mathcal L}\varphi\rangle_{L^2(\R,M(v)\dd v)} = 
 \lim_{N\to\infty}\frac{ \E(f_N,f_N)}{\|f_N\|_2} \geq   \limsup_{N\to\infty}\widehat\Delta_N\ .$$
 As $\varphi$ is an arbitrary trial function in the variational definition of $\Lambda$, this proves
 Theorem~\ref{apl}. 
\end{proof}

\section{Correlation operators on the sphere}

In this section we work on $S^{N-1} = S^{N-1}(1)$, the unit sphere in $\R^N$, and we let
$\dd \sigma_N$ denote the uniform probability on $S^{N-1}$.  
The relation between the uniform probability measure on  $S^{N-1}(\sqrt{N})$ that was studied in the last section
suggests that the coordinate functions on $(S^{N-1},\dd \sigma_N)$, regarded as a probability space, should be ``nearly independent'' for large $N$, at least if one does not ``look at too many of them at once''. 
Our goal in this section is to prove results that precisely express, in a quantified manner,  these assertions.
Using the unitary rescaling operation (\ref{scaleC}) 
we shall then be able to apply our results in $L^2(S^{N-1}(\sqrt{N}))$. However, the derivation of the results shall be easier if we work on the unit sphere. 

We begin by introducing some notation. Let $\h$ denote the Hilbert space  $L^2(S^{N-1},\dd \sigma_N)$.
Given any non-empty subset $A\subset\{1,\dots,N\}$, let $\h_A$ denote the subspace of $\h$
that is the closure of the set of all polynomials in the variables $v_j$ with $j\in A$. That is, loosely
speaking,  $\h_A$ is the subspace of functions only depending on coordinates with indices in $A$. 
Let $P_A$ denote the orthogonal projection of $\h$ onto $\h_A$. In the special case that $A = \{j\}$,
we simple write $P_j$ to denote this projection. This usage is consistent with our previous use of
the notation $P_j$. 

For each $m=1,\dots,N-1$, let $\pi_{\{1,\dots,m\}}$ denote the map
\begin{equation}\label{pimd}
\pi_{\{1,\dots,m\}}( v) = (v_1,\dots,v_m)\ .
\end{equation}
For simplicity, we write $\pi_j$ to mean $\pi_{\{j\}}$.
The image of  $S^{N-1}$ under $\pi_{\{1,\dots,m\}}$ is the unit ball $B^m$ in $\R^m$, and by a standard computation, for any continuous function $\psi$ on $B^m$,
\begin{equation}\label{change}
\int_{S^{N-1}}\psi[\pi_{\{1,\dots,m\}}( v) ]\dd \sigma = \int_{B^m}\psi[w]\dd \nu_{N,m}(w)\ ,
\end{equation}
where 
\begin{equation}\label{change2}
\dd \nu_{N,m}(w) = \frac{|S^{N- m-1}|}{|S^{N-1}|} \left(1 - |w|^2\right)^{(N - m -2)/2}\dd w
\end{equation}
where $|S^{k-1}|$ denote the surface area of $S^{k-1}$ in $\R^k$.
We define $\K_{N,m}$ to be the Hilbert space $L^2(B^m,  \nu_{N,m})$.

Now define $\phi_{N,m}: S^{N-m-1}\times B^m \to S^{N-1}$ by
\begin{equation}\label{change3}
\phi_{N,m}(y,w) = (\sqrt{1 - |w|^2}y_1,\dots,\sqrt{1 - |w|^2}y_{N-m},w_1,\dots,w_m)\ .
\end{equation}
Then for any continuous function $h$ on $S^{N-1}$,
\begin{equation}\label{change4}
\int_{S^{N-1}}h[x]\dd \sigma(x) = \int_{B^m}\left[ \int_{S^{N-m-1}}h[\phi_{N,m}(y,w)] \dd \sigma(y)\right]
\dd \nu_{N,m}(w)\ .
\end{equation}

For any $m\leq N/2$, we define the {\em $m$-particle correlation operator} $K_{(N,m)}$ 
as a self adjoint operator on $\K_{N,m}$ through
\begin{equation}\label{change5}
\langle K_{(N,m)} f,g\rangle_{\K_{N,m}} = 
\int_{S^{N-1}}f(v_1,\dots,v_m)g(v_{N-m+1}, \dots,v_N)\dd \sigma\ .
\end{equation}
Then using
(\ref{change3}), we find
\begin{equation}\label{change6}
K_{(N,m)}f(w_1,\dots,w_m) = \int_{S^{N-m-1}}f(\sqrt{1 - |w|^2}y_1,\dots,\sqrt{1 - |w|^2}y_{m})
\dd \nu_{N-m,m}(y)\ .
\end{equation}

\begin{thm}\label{kprops}
For all $m \leq N$ and all $\N\geq 3$,  the non zero eigenvalues of  $K_{(N,m)}$ are given by
\begin{equation}\label{eigenform1}
\kappa_{N,m}(k) = (-1)^k \frac{\Gamma\left(\frac{N-m}{2}\right)}
{\Gamma\left(\frac{N-m}{2} + k\right)}
\frac{\Gamma\left(\frac{m}{2}+k\right)}
{\Gamma\left(\frac{m}{2}\right)}
\qquad{for}\qquad k = 0,1,2,\dots\ .
\end{equation}
For each $k\geq 0$, one has
\begin{equation}\label{eigenform2}
|\kappa_{N,m}(k)| > |\kappa_{N,m}(k+1)|\ .
\end{equation}
Moreover, for each $k$, the  eigenspace corresponding to  $\kappa_{N,m}(k)$ is one dimensional, and is spanned by a polynomial of degree $k$ in $|y|^2$, $y\in B^m$.
In particular, the eigenspace  corresponding to  $\kappa_{N,m}(1)$ is spanned by
\begin{equation}\label{eigenform3}
|y|^2 - m/N\ .
\end{equation}
\end{thm}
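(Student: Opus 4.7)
The plan is to exploit two structural features of $K_{(N,m)}$: it is self-adjoint on $\K_{N,m}$, and its defining integrand is manifestly invariant under the natural $O(m)$-action on the first $m$ coordinates of $y\in S^{N-m-1}$. The second feature forces the image of $K_{(N,m)}$ to lie inside the closed subspace $\K_{N,m}^{\mathrm{rad}} \subset \K_{N,m}$ of radial functions (that is, functions of $u := |w|^2$ alone), and self-adjointness then forces $\ker K_{(N,m)}$ to contain the whole orthogonal complement of $\K_{N,m}^{\mathrm{rad}}$. So the entire nonzero spectrum is obtained by diagonalising $K_{(N,m)}$ on the radial subspace.

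On the radial subspace I will compute $K_{(N,m)}$ explicitly on each monomial $u^k$. The key input is that the distribution of $|y_{[m]}|^{2}$ under the uniform measure on $S^{N-m-1}$ is a $\mathrm{Beta}(m/2,(N-2m)/2)$ law --- a one-line reduction from \eqref{change2} applied in dimension $N-m$, followed by the substitution $s = |y_{[m]}|^2$. Combined with \eqref{change6}, this gives
\begin{equation*}
K_{(N,m)}[u^k](w) \;=\; c_k\,(1-u)^k, \qquad c_k \;:=\; \frac{\Gamma(m/2+k)}{\Gamma(m/2)}\cdot\frac{\Gamma((N-m)/2)}{\Gamma((N-m)/2+k)},
\end{equation*}
the formula for $c_k$ arising from the corresponding ratio of Beta functions.

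With this explicit image in hand the rest is linear algebra. The space $R_k$ of polynomials in $u$ of degree at most $k$ is $K_{(N,m)}$-invariant, and in the ordered basis $\{1,u,u^2,\dots,u^k\}$ the operator is upper triangular: the $j$th column is $c_j$ times the expansion of $(1-u)^j$, so the $j$th diagonal entry is the leading coefficient $(-1)^j c_j$. Hence the eigenvalues of $K_{(N,m)}$ in $R_k$ are $\kappa_{N,m}(j) = (-1)^j c_j$ for $0 \leq j \leq k$, reproducing \eqref{eigenform1}. Under the assumption $m < N/2$ implicit in the definition of $K_{(N,m)}$, the ratio $c_{k+1}/c_k = (m/2+k)/((N-m)/2+k)$ is strictly less than $1$, giving \eqref{eigenform2} and at the same time showing the $\kappa_{N,m}(k)$ are pairwise distinct; triangularity then forces each eigenspace to be one-dimensional, spanned by a polynomial in $u$ of degree exactly $k$. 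Solving the resulting $2\times 2$ problem on $R_1$ yields $\mu = c_1/(1+c_1) = m/N$, so the $\kappa_{N,m}(1)$-eigenvector is $u - m/N = |y|^2 - m/N$, which is \eqref{eigenform3}.

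The main substantive step is the first paragraph: pinning down that $K_{(N,m)}$ annihilates the (large) non-radial subspace, which is where self-adjointness and $O(m)$-invariance do real work. After that, the essential observation is that $K_{(N,m)}$ sends $u^k$ to a scalar multiple of $(1-u)^k$ rather than $u^k$; this shift of basis is what simultaneously produces the upper-triangular structure in the power basis and the alternating sign in \eqref{eigenform1}, and everything else is bookkeeping with Gamma functions.
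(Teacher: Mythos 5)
Your proof is correct and follows essentially the same route as the paper: reduce to the radial subspace, compute $K_{(N,m)}$ on $u^k = |y|^{2k}$ to get a constant times $(1-u)^k$, and extract the eigenvalue $(-1)^k c_k$ from the leading coefficient. Your upper-triangular phrasing and the ratio argument $c_{k+1}/c_k = (m/2+k)/((N-m)/2+k)<1$ for \eqref{eigenform2} are cosmetically different from the paper's comparison of leading coefficients and its pointwise bound $|y|^{2k} > |y|^{2k+2}$ a.e., but both pairs of arguments are equivalent; you are also slightly more careful in flagging that strict monotonicity requires $m < N/2$, which the paper's blanket ``$m \le N$'' glosses over.
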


\begin{proof}
Observe from (\ref{change6})
 that $K_{(N,m)}f$ is always radial.  Moreover, by the symmetries of $\dd \nu_{N-m,m}(y)$,
if $f$ is a polynomial of total degree $k$ in $y_1,\dots,y_m$, so is  $K_{(N,m)}f$.
Thus, the eigenfunctions of  $K_{(N,m)}$ that are not in the null space of $K_{(N,m)}$ are
polynomials in $|y|^2$, where $y\in B^m$ and hence $0 \le |y|^2 \le 1$. The polynomials are
thus identified as (shifted and scaled) Jacobi polynomials. But even more easily, it is easy to see
that the eigenvalue $\kappa_{N,m}(k)$ of  $K_{(N,m)}$ that corresponds
to the eigenfunction that is a polynomial of order $|v|^{2k}$ is given by 
\begin{equation}\label{eigenform}
\kappa_{N,m}(k) = (-1)^k \int_{B^m}|y|^{2k} \dd \nu_{N-m,m}(y)\ .
\end{equation}

This integral can easily be evaluated in terms of the Beta function, $B(x,y)$. Using (\ref{change2}), we have
\begin{eqnarray}
\int_{B^m}|y|^{2k} \dd \nu_{N-m,m}(y) 
&=& \frac{|S^{N-2m-1}||S^{m-1}|}{|S^{N-1}|}\int_0^1 (1 - r^2)^{(N - 2m -2)/2}(r^2)^{(m-1+2k)/2}\dd r\nonumber\\
&=& \frac{|S^{N-2m-1}||S^{m-1}|}{2|S^{N-1}|}\int_0^1 (1 - x)^{(N - 2m -2)/2}(x)^{(m+2k-2)/2}\dd x\nonumber\\
&=& \frac{|S^{N-2m-1}||S^{m-1}|}{2|S^{N-1}|}B\left(\frac{N}{2} -m, \frac{m}{2} + k\right)\ .
\end{eqnarray}
Since the right hand side equals one for $k=0$, it follows that
$$ \frac{|S^{N-2m-1}||S^{m-1}|}{2|S^{N-1}|} = \left[B\left(\frac{N}{2} -m, \frac{m}{2}\right)\right]^{-1}\ .$$
This leads to the explicit formula
$$|\kappa_{N,m}(k) | = \frac{B(N/2 - m, m/2+k)}{B(N/2 - m,m/2)}\ .$$
Finally, using the identity ${\displaystyle B(x,y) = \frac{\Gamma(x)\Gamma(y)}{\Gamma(x+y)}}$, 
one arrives at  (\ref{eigenform1}).

While (\ref{eigenform2}) can be deduced from (\ref{eigenform1}), it is much simpler to observe that since
$|y|^{2k} >  |y|^{2k+2}$ almost everywhere on $B^m$, (\ref{eigenform2}) follows directly
from (\ref{eigenform}).
In particular, there is no degeneracy in the non-zero spectrum,
and  each eigenvalue $\kappa_{N,m}(k)$ has a one dimensional  eigenspace  spanned by a polynomal of degree $k$ in $|y|^2$. 
Since functions in the eigenspace for $\kappa_{N,m}(1)$ must be orthogonal to the constants, it
follows that this eigenspace is spanned by  the function given in (\ref{eigenform3}). 
\end{proof}

Using (\ref{eigenform3}) and the identity $\Gamma(x+1) = x\Gamma(x)$, one readily computes
\begin{equation}\label{change50}
\kappa_{N,m}(1) = -\frac{m}{N-m}
\end{equation}
and
\begin{equation}\label{change51}
\kappa_{N,m}(2) = \frac{m(m+2)}{(N-m)(N-m+2)}\ .
\end{equation}

We now relate these results to the idea that the coordinate functions on $(S^{N-1},\dd \sigma_N)$, regarded as a probability space, should be ``nearly independent'' for large $N$, at least if one does not ``look at too many of them at once''. 

Fix any $m \leq N/2$, and let $A$ and $B$ be two {\em disjoint} subsets of $\{1,\dots,N\}$, each of cardinality $m$. 
Let $f$ and $g$ be any two functions in $L^2(B^m,\dd \nu_{N,m})$ that are orthogonal to the constants. 
Then, by the definitions of $\dd \nu_{N,m}$, $K_{(N,m)}$ and symmetry,
\begin{equation}\label{eigenform2A}
\int_{S^{N-1}}\left(f\circ \pi_A\right)\left( g\circ\pi_B\right)\dd \sigma_N = \int_{B^m}f(y)[K_{(N,m)}g](y)\dd \nu_{N,m}(y)\ .
\end{equation}
If the coordinate functions were independent on $(S^{N-1},\dd \sigma_N)$, then the left hand side of
(\ref{eigenform2}) would equal
$$
\int_{S^{N-1}}\left(f\circ \pi_A \right)\dd \sigma_N \int_{S^{N-1}} \left(g\circ\pi_B\right)\dd \sigma_N = 
 \int_{B^m}f(y)\dd \nu_{N,m}(y)\int_{B^m}g(y)\dd \nu_{N,m}(y)= 0\ .
$$
That is, the random variables $\left(f\circ \pi_A \right)$ and $\left(g\circ \pi_B \right)$ would be uncorrelated. 
However, the coordinate functions are not independent on $(S^{N-1},\dd \sigma_N)$, and the size of
the right hand side of
(\ref{eigenform2}) measures the resulting correlations between $\left(f\circ \pi_A \right)$ and $\left(g\circ \pi_B \right)$.
By Theorem~\ref{kprops} and the computation (\ref{change50}), we have
$$\left|\int_{B^m}f(y)[K_{(N,m)}g](y)\dd \nu_{N,m}(y)\right| \leq  \frac{m}{N-m}\|f\|_{L^2(B^m,\dd \nu_{N,m})}
\|g\|_{L^2(B^m,\dd \nu_{N,m})}\ .$$
That is,
\begin{equation}\label{change52}
\left|\int_{S^{N-1}}\left(f\circ \pi_A\right)\left( g\circ\pi_B\right)\dd \sigma_N\right| \leq 
 \frac{m}{N-m} \left(\int_{S^{N-1}}\left(f\circ \pi_A \right)^2\dd \sigma_N\right)^{1/2}
  \left(\int_{S^{N-1}}\left(g\circ \pi_B \right)^2\dd \sigma_N\right)^{1/2}\ .
\end{equation}

One sees from (\ref{change52}) that if $m$ is small compared to $N$, then 
$\left(f\circ \pi_A \right)$ and $\left(g\circ \pi_B \right)$ have only a small correlation.  However, if we look at too many variables at once, so that $m$ is not small compared with $N$, there can be significant correlation. For example,
if $N$ is even and $m = N/2$, then $m/(N - m) =1$, and  $\left(f\circ \pi_A \right)$ and $\left(g\circ \pi_B \right)$
can be {\em fully} correlated. 
Indeed, if $N = 2m$ and one knows the value of $v_1^2 + \cdots + v_m^2$, then knows the value of
$v_{m+1}^2+ \cdots + v_N^2$ with complete certainty. 

Let us now consider the cases $m=1$ and $m=2$ with $N$ large, so that correlations will be small. 
Let $f$ be any function in $\h$ that is orthogonal to the constants. Then for each $k=1,\dots,N$, $P_kf$ is also orthogonal to the constants, and for $j\neq k$, $P_jf$ and $P_kf$ would be nearly orthogonal.
If they were {\em exactly} orthogonal, Bessel's inequality would imply that
$$\sum_{k=1}^N \|P_kf\|^2_\h$$
would be no larger than  ${\displaystyle \|f\|^2_\h}$.
It turns out that  for $m= 1$ and large $N$,  the correlations are small
enough that  something almost as good as this is true:

\begin{thm}\label{correl1}
Let $f$ be any function in $\h$ that is orthogonal to the constants.  Then
\begin{equation*}
\frac{1}{N} \sum_{k=1}^N \|P_kf\|^2_\h \leq \left(\frac{1}{N} + \frac{3}{N(N+1)}\right)
\|f\|^2_\h\ .
\end{equation*}
\end{thm}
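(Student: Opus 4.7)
The plan is to convert the sum of norms into a single quadratic form and then diagonalize it via spherical harmonics. Since each $P_k$ is an orthogonal projection, $\|P_k f\|_{\h}^2 = \langle f, P_k f\rangle$, so the claim is equivalent to
\[
\langle f, Pf\rangle \leq \mu_N\|f\|_{\h}^2 \qquad\text{for all } f\perp 1,\qquad P := \frac{1}{N}\sum_{k=1}^N P_k,
\]
i.e.\ to the statement that the self-adjoint operator $P$ has second-largest eigenvalue at most $\mu_N$. I will estimate this norm by decomposing $\h = \bigoplus_{\ell\geq 0} H_\ell$ into spherical harmonics.

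Because $P_k$ is the orthogonal projection onto functions invariant under the subgroup $\mathrm{SO}(N-1)_k$ of rotations fixing the $k$-th axis, and each $H_\ell$ is $\mathrm{SO}(N)$-invariant, each $H_\ell$ is $P_k$-invariant and hence $P$-invariant. Within $H_\ell$ the space of $\mathrm{SO}(N-1)_k$-invariants is one-dimensional, spanned by the $\h$-normalized zonal harmonic $Z_\ell^{(k)}(v) = \tilde Z_\ell(v_k)$, so $P_k\big|_{H_\ell}$ is the rank-one orthogonal projection onto $Z_\ell^{(k)}$. Consequently $NP\big|_{H_\ell}$ is a sum of rank-one projections whose non-zero spectrum coincides with that of the Gram matrix $G_\ell := (\langle Z_\ell^{(j)}, Z_\ell^{(k)}\rangle_{\h})_{j,k}$. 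Permutation symmetry forces $G_\ell = (1-c_\ell)I + c_\ell J$, so its eigenvalues are $1+(N-1)c_\ell$ and $1-c_\ell$.

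The decisive identification is $c_\ell = \alpha_\ell$, where $\alpha_\ell$ denotes the eigenvalue of the single-particle correlation operator $K = K_{(N,1)}$ on the degree-$\ell$ subspace (with the convention $\alpha_\ell = 0$ for odd $\ell$). By the defining property of $K$,
\[
c_\ell = \langle Z_\ell^{(1)}, Z_\ell^{(2)}\rangle_{\h} = \langle \tilde Z_\ell, K\tilde Z_\ell\rangle_{L^2(\dd\nu_{N,1})},
\]
and since spherical harmonics of distinct degrees are $\h$-orthogonal, $\{\tilde Z_\ell\}_\ell$ is a system of orthogonal polynomials in $L^2(\dd\nu_{N,1})$ with $\tilde Z_\ell$ of degree exactly $\ell$. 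Theorem~\ref{kprops} with $m=1$ identifies the non-zero eigenvectors of $K$ as precisely the even-degree members of this orthogonal polynomial system, with eigenvalues $\alpha_{2j} = \kappa_{N,1}(j)$, while placing odd-degree polynomials in the kernel; hence $c_\ell = \alpha_\ell$.

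It then remains to verify $\max\{1+(N-1)\alpha_\ell,\,1-\alpha_\ell\} \leq 1 + 3/(N+1) = N\mu_N$ for every $\ell \geq 1$. The explicit values $\alpha_2 = -1/(N-1)$, $\alpha_4 = 3/[(N-1)(N+1)]$, $\alpha_6 = -15/[(N-1)(N+1)(N+3)]$, together with the monotonicity $|\alpha_{k+2}| < |\alpha_k|$ from Theorem~\ref{kprops}, reduce this to elementary inequalities valid for $N\geq 2$, with equality occurring exactly at $\ell = 4$ (the familiar quartic eigenfunction from \eqref{gapfu}). The main subtlety I expect is rigorously identifying the zonal spherical harmonics, as functions of $v_k$ alone, with the orthogonal polynomial basis of $L^2(\dd\nu_{N,1})$ that diagonalizes $K$: Theorem~\ref{kprops} describes the latter only as polynomials in $|y|^2$ of prescribed degree, so one must invoke uniqueness of orthogonal polynomials to conclude that the two bases agree up to normalization.
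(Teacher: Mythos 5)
The paper does not prove Theorem~\ref{correl1} here; it simply cites \cite{CCL03} for it, so there is no in-paper argument to compare against. Your proof is correct and self-contained, and proceeds by what is essentially the same route as the cited reference: decompose $\h$ into spherical harmonic spaces $H_\ell$, note each $P_k\big|_{H_\ell}$ is the rank-one projection onto the (normalized) zonal harmonic $Z_\ell^{(k)}$, and pass from $\sum_k P_k\big|_{H_\ell}$ to its Gram matrix via the $U^*U$ versus $UU^*$ duality, with permutation symmetry forcing $G_\ell=(1-c_\ell)I+c_\ell J$.

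The one ``subtlety'' you flag, identifying $c_\ell$ with the $K$-eigenvalue $\alpha_\ell$, closes without much trouble. From \eqref{change6} with $m=1$, $K=K_{(N,1)}$ maps $y^n$ to a polynomial of degree $\le n$ (indeed $y^{2k}\mapsto (\text{const})(1-w^2)^k$ and odd monomials to $0$), so $K$ is upper triangular with respect to the orthogonal-polynomial basis of $L^2(\dd\nu_{N,1})$; being self-adjoint there, it is therefore diagonal in that basis. Since $Z_\ell^{(1)}, Z_m^{(1)}$ lie in distinct $\h$-orthogonal spaces $H_\ell,H_m$ and $Z_\ell^{(1)}$ is a polynomial of degree exactly $\ell$ in $v_1$, the $\tilde Z_\ell$ \emph{are} that orthogonal-polynomial system, and $c_\ell=\langle\tilde Z_\ell,K\tilde Z_\ell\rangle_{L^2(\dd\nu_{N,1})}=\alpha_\ell$ (recall $\|\tilde Z_\ell\|_{L^2(\dd\nu_{N,1})}=\|Z_\ell^{(1)}\|_\h=1$).

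Two harmless imprecisions to note. First, $\sum_k P_k\big|_{H_\ell}$ has rank at most $N$ while $\dim H_\ell$ is generally larger, so its spectrum is $\{0\}\cup\mathrm{spec}(G_\ell)$ rather than $\mathrm{spec}(G_\ell)$ alone (and $G_2$ itself is singular, since $1+(N-1)\alpha_2=0$); but $0\le N\mu_N$ so the bound is unaffected. Second, the characterization ``equality exactly at $\ell=4$'' is accurate for $N\ge 3$; at $N=2$ one also has equality at $\ell=2$ and $\ell=6$ (where the check $N/(N-1)\le (N+4)/(N+1)$ is tight), which is immaterial since the paper restricts to $N\ge 3$ anyway. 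The final verification that $\max\{1+(N-1)\alpha_\ell,\,1-\alpha_\ell\}\le 1+3/(N+1)$ for all $\ell\ge 1$, using $|\alpha_2|>|\alpha_4|>\cdots$ and the sign alternation $\mathrm{sgn}\,\alpha_{2k}=(-1)^k$, reduces cleanly to $N/(N-1)\le(N+4)/(N+1)$, i.e.\ $N\ge 2$. The argument is sound.
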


 Theorem~\ref{correl1} is proved in \cite{CCL03}. 
 The following theorem is a companion to it for
 $m=2$. For $f$ orthogonal to the constants and $N$ large, $P_{\{1,2\}}f$, 
 $P_{\{3,4\}}f$, $P_{\{5,6\}}f$, and so forth, should be nearly orthogonal and so one might expect that
 $$\sum_{\{j\ :\ 2 \leq 2j \leq N\}}\|P_{\{2j-1,2j\}}f \|^2_\h$$
 should not be much larger than $\|f\|^2_\h$. Since there are essentially 
 $N/2$ terms in the sum, and the measure $\dd \sigma_N$ is permutation invariant, one might 
 expect a result of the following type:
 
 \begin{thm}\label{correl2}  For all  $N \ge 3$ and all $f$  orthogonal to the constants,
\begin{equation*}
\ncht^{-1} \sum_{i<j} \|P_{\{i,j\}}f\|^2  \leq \left[\frac{2}{N-1} +\frac{8N}{(N-2)(N-4)^2}\right] \|f\|^2\ .
\end{equation*}

\end{thm}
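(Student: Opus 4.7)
The plan is to bound the positive self--adjoint operator $T := \binom{N}{2}^{-1}\sum_{i<j}P_{\{i,j\}}$ on the orthogonal complement of the constants in $\mathcal{H}$, since
\[
\binom{N}{2}^{-1}\sum_{i<j}\|P_{\{i,j\}}f\|^2 \;=\; \langle Tf, f\rangle .
\]
The inequality then amounts to an operator bound on $T$ restricted to $\langle 1\rangle^\perp$.

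First I would reduce the global sum over pairs to a uniform estimate over a single disjoint--pair matching, exploiting permutation symmetry. For each near--perfect matching $\pi=\{\pi_1,\dots,\pi_M\}$ of $\{1,\dots,N\}$ into disjoint pairs with $M=\lfloor N/2\rfloor$, set $T_\pi := \sum_{k=1}^{M}P_{\pi_k}$. A direct count shows that any unordered pair $\{i,j\}$ belongs to exactly a fraction $1/(N-1)$ of such matchings, so
\[
\mathrm{avg}_\pi\,\langle T_\pi f,f\rangle \;=\; \frac{1}{N-1}\sum_{i<j}\|P_{\{i,j\}}f\|^2 .
\]
Consequently any uniform estimate $\langle T_\pi f,f\rangle\le C_N\|f\|^2$ yields $\binom{N}{2}^{-1}\sum\|P_{\{i,j\}}f\|^2\le 2C_N/N$, and I would tune $C_N$ to recover the stated bound.

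Next, for a fixed matching I would use Theorem~\ref{kprops} to control the cross terms in $\|T_\pi f\|^2=\sum_k\|P_{\pi_k}f\|^2+2\sum_{k<l}\langle P_{\pi_k}f,P_{\pi_l}f\rangle$. Since $f\perp 1$, each $P_{\pi_k}f\in\mathcal{K}_{N,2}$ is also orthogonal to constants, so the identification underlying \eqref{eigenform2A}, together with the top nontrivial eigenvalue $|\kappa_{N,2}(1)|=2/(N-2)$, gives for any two disjoint pairs $\pi_k,\pi_l$,
\[
|\langle P_{\pi_k}f,P_{\pi_l}f\rangle|\;\le\;\frac{2}{N-2}\,\|P_{\pi_k}f\|\,\|P_{\pi_l}f\|.
\]
Combining this with the Cauchy--Schwarz step $\langle T_\pi f,f\rangle\le\|T_\pi f\|\,\|f\|$ and the trivial $(\sum a_k)^2\le M\sum a_k^2$ produces a bound of the right order of magnitude in $N$, though not yet with the sharp leading constant.

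The main obstacle will be tightening this Cauchy--Schwarz step to get the announced constants $2/(N-1)$ and $8N/[(N-2)(N-4)^2]$, rather than a cruder $O(1/N)$. My approach would be to refine the estimate by decomposing each $P_{\pi_k}f$ into its $K_{(N,2)}$--spectral components: a constant part (vanishing by $f\perp 1$), a level--$1$ radial piece supported on the $\kappa_{N,2}(1)$--eigenspace, a level--$\ge 2$ radial piece, and a non--radial piece lying in $\ker K_{(N,2)}$. The non--radial parts are exactly orthogonal across disjoint pairs, while the radial cross--correlations drop from $|\kappa_{N,2}(1)|=2/(N-2)$ on level $1$ to at most $|\kappa_{N,2}(2)|=8/[N(N-2)]$ on higher levels. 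Running the matching and Cauchy--Schwarz estimate separately on each spectral sector isolates the level--$1$ sector as the source of the main term $2/(N-1)$, while the remaining sectors contribute only a quadratically smaller correction of the form $8N/[(N-2)(N-4)^2]$ after averaging over matchings.
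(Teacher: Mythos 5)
Your proposal shares the main ingredients of the paper's argument --- averaging over matchings and controlling cross--correlations via the spectrum of $K_{(N,2)}$ --- but the route through them is genuinely different, and there is a conceptual error in the final paragraph that matters. The paper's proof does \emph{not} run a spectral--sector decomposition at all. Instead, it exploits the (unstated, but used) symmetry of $f$: since $\eta(v_1,v_2)=\tfrac{2}{N}-v_1^2-v_2^2$ spans the $\kappa_{N,2}(1)$--eigenspace and $\sum_{\alpha}\eta\circ\pi_\alpha=0$ on the sphere, a symmetric $f$ is automatically orthogonal to \emph{every} $\eta\circ\pi_\alpha$. From this, every eigenfunction of $P_{\mathcal{A}}$ in the relevant invariant subspace has the shape $\sum_{\alpha\in\mathcal{A}}\varphi\circ\pi_\alpha$ with $\varphi\perp\eta$, so one may immediately use $|\kappa_{N,2}(2)|=8/[N(N-2)]$ rather than $|\kappa_{N,2}(1)|=2/(N-2)$ for the cross terms, and the Rayleigh quotient is computed directly without any Cauchy--Schwarz loss. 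Your plan never invokes symmetry; this is not automatically fatal, but it is exactly the technical trick that lets the paper jump to the second eigenvalue in one step. If you intend to prove the bound without assuming symmetry (which would be strictly stronger than what the paper does), you need to explain how the level--$1$ part is controlled, since the simple estimate $|\langle P_{\pi_k}f,P_{\pi_l}f\rangle|\le \tfrac{2}{N-2}\|P_{\pi_k}f\|\|P_{\pi_l}f\|$ is of the \emph{same} order as the diagonal term $1/M$ and therefore not tolerable if applied indiscriminately.

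The concrete mistake is the last sentence: the leading term $2/(N-1)$ does \emph{not} come from the level--$1$ sector. It is purely combinatorial: it is the factor $1/M$ (equivalently, the fraction of matchings containing a fixed pair, $\binom{N}{2}^{-1}M$) produced by the matching average, and it is present in every spectral sector. In the paper, the level--$1$ sector contributes \emph{nothing}, because symmetry removes it entirely; the correction term $8N/[(N-2)(N-4)^2]$ is then the level--$\ge 2$ cross--correlation, not a ``quadratically smaller'' add-on to a level--$1$ main term. If you do go the sector--decomposition route, the correct combination is a \emph{maximum} over the per--sector operator--norm constants (because $\|\sum_k g_k\|^2 = \sum_a\|\sum_k g_k^{(a)}\|^2$ by orthogonality of the $K_{(N,2)}$--eigenspaces), not a sum of per--sector contributions. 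As written, ``running the estimate separately on each sector'' and attributing a separate term of the final bound to each sector suggests a sum, and with a sum the level--$1$ frame bound ($1+2/(N-2)$, the top eigenvalue of the Gram matrix of $\{\eta\circ\pi_{\pi_k}\}$) exceeds the target $N/(N-1)$ and the argument does not close without further work.

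In short: right building blocks (matching average, $K_{(N,2)}$ eigenvalues), but you are missing the paper's symmetry observation that removes level $1$, you misidentify the source of the $2/(N-1)$ term, and the combination step in the sector decomposition needs to be a maximum rather than a sum for the argument to yield the claimed sharpness.
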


\begin{proof}Let $M$ denote the integer part of $N/2$.
Let ${\mathcal A}$ denote any set of 
$M$ {\em disjoint} pairs of indices in $\{1,\dots,N\}$.  Let
$P_{\mathcal A}$ denote the self-adjoint operator
$$P_{\mathcal A} = \frac{1}{M}\sum_{\alpha\in {\mathcal A}}P_\alpha\ .$$

We wish to bound 
\begin{equation}\label{var1}
\sup\left \{  \langle f, P_{\mathcal A} f\rangle \ :\  \|f\|_2 =1\ ,\ \langle f,1\rangle = 0 \quad{\rm and}\quad  f\ {\rm is\ symmetric}\ \right\}
\end{equation}
Define the function
$$\eta(v_1,v_2) = \frac{2}{N} - v_1^2-v_2^2$$
on the unit ball in $\R^2$.  This function spans the eigenspace of $K_{(N,2)}$ with eigenvalue $\kappa_{N,2} = -2/(N-2)$. 
For the pair $\gamma = (\gamma_1,\gamma_2)$, define
$$\xi_\gamma(v) =  \eta(v_{\gamma_1},v_{\gamma_2}) \ ,$$
Then since the average of the $\xi_\gamma$ over all pairs is zero, for $f$ is symmetric, $f$ is orthogonal to each $\xi_\gamma$. 
In particular, such an $f$ is orthogonal to each $\xi_\alpha$ with $\alpha\in \mathcal{A}$.  Since the pairs in $\mathcal{A}$
are disjoint, and since $\eta$ is an eigenfunction of $K_{(N,2)}$, it is evident that the span of $\{\xi_\alpha\ :\ \alpha \in \mathcal{A}\}$
is an invariant subspace of $P_{\mathcal{A}}$. 

Let us say that a function $f$ is {\em $\mathcal{A}$-symmetric} in case it is symmetric under interchanges of pairs of coordinates in $\mathcal{A}$. 
The space of  $\mathcal{A}$-symmetric functions is evidently an invariant subspace of $P_{\mathcal{A}}$,
and any (fully) symmetric function belongs to this space. 

Altogether, the supremum in (\ref{var1}) is no larger than the largest eigenvalue of $P_{\mathcal{A}}$ on the subspace of 
$\mathcal{A}$-symmetric functions that are orthogonal to each $\xi_\alpha$, $\alpha\in \mathcal{A}$.  

Therefore, let $f$ be
any eigenfunction of  $P_{\mathcal{A}}$ on this space for which the eigenvalue is non-zero. 
Since $f$ is in the range of  $P_{\mathcal{A}}$, it is necessarily of the form
$$f (v) = \sum_{\alpha\in \mathcal{A}}\varphi(\pi_\alpha(v))\ $$
for some function $\varphi$ on the ball such that $\varphi$ is orthogonal to $\eta$ on the ball.

For such an $f$, we compute
\begin{eqnarray}\label{srule3}
\langle f, P_{\mathcal A} f\rangle &=&
\frac{1}{M}\sum_{\alpha,\beta,\gamma\in {\mathcal A}} \langle \varphi \circ\pi_\beta, P_\alpha \varphi\circ\pi_\gamma\rangle_{\h}\nonumber\\
&=&
\frac{1}{M}\sum_{\alpha,\beta\in {\mathcal A}} \langle \varphi\circ\pi_\beta, 
 \varphi\circ\pi_\alpha\rangle_{\h} + 
\frac{M-1}{M}\sum_{\alpha,\beta\in {\mathcal A}}
 \langle \varphi\circ\pi_\beta,\left[ K_{N,2}\varphi\right]
 \circ\pi_\alpha\rangle_{\h}\nonumber\\
 &=&\frac{1}{M}\|f\|^2 + 
 \frac{(M-1)^2}{M}
 \left \langle \varphi + (M-1)\left[K_{N,2}\varphi\right], 
 \left[K_{N,2}\varphi\right]\right\rangle_{\K_{N,2}}  
 \end{eqnarray}
 
 $$
 \langle f, P_{\mathcal A} f\rangle \leq \frac{1}{M}\|f\|^2 +\left(\frac{8}{N(N-2)}\right) \left(\frac{M-1}{M}\right)^2 \left( 1 + (M-1)\frac{8}{N(N-2)}\right) M\|  \varphi \|^2\ .$$
 Finally, we note that, again using the spectral properties of $K_{(N,2)}$, that
 $$\|f\|^2 \geq M\|\varphi\|^2\left( 1 -  (M-1)\frac{8}{N(N-2)}\right) \ .$$
 Then, since for $0< x <1$, $(1+x) < (1-x)^{-1}$, we obtain 
 $$
 \langle f, P_{\mathcal A} f\rangle \leq 
 \left[\frac{1}{M} +\left(\frac{8}{N(N-2)}\right) \left(\frac{M-1}{M}\right)^2\left( 1 - (M-1)\frac{8}{N(N-2)}\right)^{-2}\right] \|f\|^2\ .
 $$
 Dropping the factor $((M-1)/M)^2$ and replacing $M$ by either $(N-1)/2$ or $N/2$ as appropriate, we reduce this to
 $$
 \langle f, P_{\mathcal A} f\rangle \leq 
 \left[\frac{2}{N-1} +\left(\frac{8}{N(N-2)}\right) \left( 1 - \frac{4}{N}\right)^{-2}\right] \|f\|^2\ .
 $$
 Further simplifying this, and 
  averaging over all choices
 of $\mathcal{A}$, we obtain the bound claimed in the theorem. 
 \end{proof}

We close this section by carrying out the  simple computations based on (\ref{change6}) that have been invoked in  the proof of Lemma~\ref{moments}.
First consider the case $\psi(w_1,w_2) = w_1^2$. Then
\begin{equation}\label{change8}
K_{(N,2)}\psi(w_1,w_2) = (1- |w|^2)\int_{B_2}|y_1|^2 \dd \nu_{N-2,2}(y) = 
(1- |w|^2)\int_{B_1}|y_1|^2 \dd \nu_{N-2,1}(y) = \frac{1}{N-3}(1- |w|^2)\ .
\end{equation}

Likewise, for $\psi(w_1,w_2) = w_1^4$,
\begin{multline}\label{change9}
K_{(N,2)}\psi(w_1,w_2) = (1- |w|^2)^2\int_{B_2}|y_1|^2 \dd \nu_{N-2,2}(y) =\\ 
(1- |w|^2)\int_{B_1}|y_1|^4 \dd \nu_{N-2,1}(y) = \frac{3}{(N-3)(N-1)}(1- |w|^2)^2\ .
\end{multline}

The next lemma records these computations in a form that is directly applicable  to our problem here.

\begin{lm}\label{speccomp}  Consider $v_k^2$ and $v_k^4$ as functions on $S^{N-1}(\sqrt{N})$. Then
for $k,j,\ell$ all distinct,
\begin{equation}\label{change10}
P_{\{j,\ell\}}v_k^2 =   \frac{1}{N-3}[N- (v_j^2+v_\ell^2)]
\end{equation}
and
\begin{equation}\label{change11}
P_{\{j,\ell\}}v_k^4 =  \frac{3}{(N-3)(N-1)}[N- (v_j^2+v_\ell^2)]^2\ .
\end{equation}
\end{lm}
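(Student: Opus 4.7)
The plan is to reduce the claim to the two integral evaluations (\ref{change8}) and (\ref{change9}), which have just been carried out on the unit sphere using the correlation operator $K_{(N,2)}$, and then transfer the formulas to $S^{N-1}(\sqrt{N})$ via the unitary rescaling $S$ of (\ref{scaleC}). By permutation invariance of the uniform measure and the symmetry of each identity in the pair $\{j,\ell\}$, it suffices to verify both formulas for a single triple of distinct indices; take for concreteness $j=2$, $\ell=3$, $k=1$.

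First I would work on the unit sphere $S^{N-1}(1)$. The key observation is that for any $g$ depending only on $(v_2,v_3)$ and any auxiliary index $i \notin \{1,2,3\}$, setting $\psi(w_1,w_2) := w_1^{2m}$ (which is independent of $w_2$) and applying the defining relation (\ref{change5}) of $K_{(N,2)}$ to the disjoint pairs $\{1,i\}$ and $\{2,3\}$ gives
\begin{equation*}
\int_{S^{N-1}} v_1^{2m}\, g(v_2,v_3)\,\dd\sigma
\;=\; \int_{S^{N-1}} \psi(v_1,v_i)\, g(v_2,v_3)\,\dd\sigma
\;=\; \int_{B^2} (K_{(N,2)}\psi)(v_2,v_3)\, g(v_2,v_3)\,\dd\nu_{N,2}.
\end{equation*}
Since this holds for every admissible $g$, it identifies $(P_{\{2,3\}} v_1^{2m})(v_2,v_3)$ with $(K_{(N,2)}\psi)(v_2,v_3)$ on the unit sphere. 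Substituting $m=1$ and invoking (\ref{change8}) yields $\tfrac{1}{N-3}(1-v_2^2-v_3^2)$; substituting $m=2$ and invoking (\ref{change9}) yields $\tfrac{3}{(N-3)(N-1)}(1-v_2^2-v_3^2)^2$.

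Finally I would transport these identities to $S^{N-1}(\sqrt{N})$ by the rescaling $v \mapsto v/\sqrt{N}$. The unitary map $S$ intertwines the projections $P_{\{j,\ell\}}$ on the two spheres (as these are defined purely by integrating out the complementary coordinates, a procedure that commutes with the scalar rescaling), while the multiplicative operator $v_k^{2m}$ on $S^{N-1}(\sqrt{N})$ corresponds to $N^m u_k^{2m}$ on $S^{N-1}(1)$ under $u = v/\sqrt{N}$. Substituting $u_j = v_j/\sqrt{N}$ in the unit-sphere formulas turns $(1 - u_2^2 - u_3^2)^m$ into $N^{-m}(N - v_2^2 - v_3^2)^m$, and after multiplying by the overall factor $N^m$ the powers of $N$ cancel, producing exactly (\ref{change10}) and (\ref{change11}).

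There is no real obstacle in this argument, since the substantive content lives entirely in the two explicit integrals (\ref{change8})--(\ref{change9}). The only items to check carefully are (a) that applying $K_{(N,2)}$ to $\psi(w_1,w_2) = w_1^{2m}$ really computes the conditional expectation of $v_1^{2m}$ given $(v_2,v_3)$ — which works precisely because $\psi$ depends on only one of the two coordinates of the disjoint pair, so the choice of the auxiliary index $i$ is immaterial — and (b) the elementary bookkeeping of the $N^m$ factors in the rescaling.
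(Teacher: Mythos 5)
Your proof takes exactly the paper's route: rescale to the unit sphere, identify $P_{\{j,\ell\}}w_k^{2m}$ with $K_{(N,2)}\psi$ for $\psi(w_1,w_2)=w_1^{2m}$, and substitute the evaluations (\ref{change8})--(\ref{change9}). The extra step you spell out --- the auxiliary-index argument, showing that the defining relation (\ref{change5}) applied to the disjoint pairs $\{1,i\}$ and $\{2,3\}$ really computes the conditional expectation of $v_1^{2m}$ given $(v_2,v_3)$ even though $\psi$ is insensitive to its second argument --- is correct, and it fills in precisely the gap the paper elides in the phrase ``where we have used (\ref{change8}).''

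One caution that is independent of your reasoning but that you have faithfully reproduced: (\ref{change8})--(\ref{change9}) appear to contain an arithmetic slip. The density of $\nu_{N-2,1}$ is proportional to $(1-y^2)^{(N-5)/2}$, so $\int y^2\,\dd\nu_{N-2,1}=\tfrac{1}{N-2}$ and $\int y^4\,\dd\nu_{N-2,1}=\tfrac{3}{N(N-2)}$, not $\tfrac{1}{N-3}$ and $\tfrac{3}{(N-3)(N-1)}$. Equivalently, conditioned on $(v_j,v_\ell)$ the remaining $N-2$ coordinates are uniform on a sphere of radius $r=\sqrt{N-v_j^2-v_\ell^2}$ in $\R^{N-2}$, so $P_{\{j,\ell\}}v_k^2=\tfrac{1}{N-2}\bigl(N-v_j^2-v_\ell^2\bigr)$ and $P_{\{j,\ell\}}v_k^4=\tfrac{3}{N(N-2)}\bigl(N-v_j^2-v_\ell^2\bigr)^2$; the $1/(N-2)$ denominator (rather than $1/(N-3)$) is also what is actually used downstream in Lemma~\ref{moments}. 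This does not indicate a flaw in your method or a mismatch with the paper's argument, only in the coefficients written in the conclusion that both you and the source carry through.
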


\begin{proof}
 First define $ w$ by $ v = \sqrt{N} w$ so that $ w$ lies in the unit sphere.
 Then
 $$P_{\{j,\ell\}}v_k^2 = NP_{\{j,\ell\}}w_k^2 = N\frac{1}{N-3}(1- (w_j^2+w_\ell^2)) = 
 \frac{1}{N-3}(N- (v_j^2+v_\ell^2))\ ,$$
 where we have used (\ref{change8}). One proves  (\ref{change11}) by making analogous use of
 (\ref{change9}). 
 \end{proof}

\medskip

\section{An evaluation formula for certain infinite products}

Let $P(x)$ and $Q(x)$ be two polynomials. We are interested in calculating the infinite product
$$
\lim_{N \to \infty} \Pi_{j=M}^N \frac{P(j)}{Q(j)} \ ,
$$
and determining when the limit is non-zero. 

A necessary condition that the limit exists and is non-zero is that both polynomials are of the same degree $K$ and that
the coefficient of the two leading order terms are the same. This follows from the fact that in order for the limit
to exist and to be nonzero the factors must be of the form
$1 + R(j)$ where $R(j) = O(\frac{1}{j^2})$.
Hence we may assume that
$$
P(x) = (x-\mu_1) \cdots (x-\mu_K) \ ,  \ Q(x) = (x-\nu_1) \cdots (x-\nu_K) \ ,
$$
where $\sum \mu_n = \sum \nu_n$.
And if we seek an non-zero limit, another obvious reqirement is that none of the polynomials vanish for any $j \ge M$ since then one of the factors
is either not defined or vanishes.

\begin{thm}\label{infpro} Under the assumption stated above,
$$
\lim_{N \to \infty} \Pi_{j=M}^N \frac{P(j)}{Q(j)}   =  \Pi_{n=1}^K \frac{ \Gamma(M-\mu_n)}{ \Gamma(M-\nu_n)} \ .
$$
\end{thm}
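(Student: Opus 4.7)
The plan is to turn each finite product into a ratio of Gamma functions using the elementary identity
\begin{equation*}
\prod_{j=M}^{N}(j-\mu) \;=\; (M-\mu)(M+1-\mu)\cdots(N-\mu) \;=\; \frac{\Gamma(N+1-\mu)}{\Gamma(M-\mu)},
\end{equation*}
which is just the functional equation $\Gamma(x+1)=x\Gamma(x)$ iterated. This identity makes sense under our hypotheses because $j - \mu_n$ and $j-\nu_n$ never vanish for $j\ge M$, so all the Gamma arguments are positive.

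Applying this factor by factor to $P$ and $Q$ yields
\begin{equation*}
\prod_{j=M}^{N}\frac{P(j)}{Q(j)} \;=\; \prod_{n=1}^{K}\frac{\Gamma(N+1-\mu_n)}{\Gamma(N+1-\nu_n)}\cdot \prod_{n=1}^{K}\frac{\Gamma(M-\nu_n)}{\Gamma(M-\mu_n)}.
\end{equation*}
The right-hand factor is already the expression that is supposed to appear in the limit, so the entire problem reduces to showing that the $N$-dependent factor tends to $1$.

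For this I would invoke the standard asymptotic
\begin{equation*}
\frac{\Gamma(z+a)}{\Gamma(z+b)} \;=\; z^{a-b}\Bigl(1+O(1/z)\Bigr)\qquad(z\to\infty),
\end{equation*}
a direct consequence of Stirling's formula. Applying this with $z=N+1$ to each of the $K$ ratios, I obtain
\begin{equation*}
\prod_{n=1}^{K}\frac{\Gamma(N+1-\mu_n)}{\Gamma(N+1-\nu_n)} \;=\; (N+1)^{\sum_n(\nu_n-\mu_n)}\Bigl(1+O(1/N)\Bigr).
\end{equation*}
The key hypothesis $\sum\mu_n=\sum\nu_n$ makes the exponent $0$, so this factor is $1+O(1/N)\to 1$, finishing the proof.

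There is really no obstacle here — the argument is essentially a bookkeeping exercise once the telescoping identity is applied, and the balance condition on the sums of roots is precisely what makes the leading power of $N$ cancel. If one wanted to avoid quoting Stirling, one could alternatively group the $K$ ratios and note that $\prod_n \Gamma(N+1-\mu_n)/\Gamma(N+1-\nu_n)$ can be expanded using $\log\Gamma(N+1-\mu_n)-\log\Gamma(N+1-\nu_n) = (\nu_n-\mu_n)\log N + O(1/N)$, and summing over $n$ kills the logarithmic term by the balance hypothesis.
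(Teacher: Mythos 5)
Your proof is correct and takes essentially the same route as the paper: the telescoping identity $\prod_{j=M}^{N}(j-\mu)=\Gamma(N+1-\mu)/\Gamma(M-\mu)$, followed by Stirling's formula and the balance condition $\sum\mu_n=\sum\nu_n$ to cancel the growing power of $N$. The only cosmetic difference is that you invoke the compact asymptotic $\Gamma(z+a)/\Gamma(z+b)=z^{a-b}(1+O(1/z))$ directly, whereas the paper unpacks Stirling term by term and cancels the exponential and power factors by hand.
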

\begin{proof}
We write
$$
\Pi_{j=M}^N (j-\mu) = \frac{\Gamma(N+1 - \mu)}{\Gamma(M-\mu)}
$$
and find
$$
\Pi_{j=M}^N \frac{P(j)}{Q(j)} = \Pi_{n=1}^K \frac{\Gamma(N+1 - \mu_n) \Gamma(M - \nu_n)}{\Gamma(M-\mu_n) \Gamma(N+1 -\nu_n)} \ .
$$
Using Stirling's formula
$$
\Gamma(x) \approx \sqrt{2\pi} x^{x-1/2} e^{-x} \ , \ x \to \infty
$$
we can write  for $N$ large
$$
\Pi_{j=M}^N \frac{P(j)}{Q(j)} \approx  \Pi_{n=1}^K \frac{(N+1-\mu_n)^{N+1/2 - \mu_n} e^{-(N+1-\mu_n)} \Gamma(M-\nu_n)}
{\Gamma(M-\mu_n) (N+1 - \nu_n)^{(N+1/2 - \nu_n)} e^{-(N+1 - \nu_n)}}
$$
which simplifies to
$$
=  \Pi_{n=1}^K \frac{(N+1)^{- \mu_n} ( 1 - \frac{\mu_n}{N+1})^{N+1/2 -\mu_n}e^{\mu_n} \Gamma(M-\mu_n)}{(N+1)^{ - \nu_n} (1 - \frac{\nu_n}{N+1})^{N+1/2 -\nu_n} e^{\nu_n} \Gamma(M-\nu_n)} \ .
$$
Using the assumption that $\sum_{n=1}^K \mu_n = \sum_{n=1}^K \nu_n$, we find for large $N$
$$
\Pi_{j=M}^N \frac{P(j)}{Q(j)} =  \Pi_{n=1}^K \frac{ ( 1 - \frac{\mu_n}{N+1})^{N+1/2 -\mu_n}e^{\mu_n} \Gamma(M-\mu_n)}{ (1 - \frac{\nu_n}{N+1})^{N+1/2 -\nu_n} e^{\nu_n} \Gamma(M-\nu_n)} \ ,
$$
which converges to
$$
\Pi_{n=1}^K \frac{ \Gamma(M-\mu_n)}{ \Gamma(M-\nu_n)}
$$

\end{proof}

  \centerline{\bf Acknowledgement}
  
The first results in this paper were obtained while the E.C. and M.L. were visiting C.M.A.F.  at the University of Lisbon. E.C. and M.L.
thank C.M.A.F. for its hospitality. All three authors have had the opportunity to continue their work together while visiting I.P.A.M. 
which the authors thank for its hospitality.

\end{document}